\tikzset{cross/.style={cross out, draw=black, minimum size=2.5*(#1-\pgflinewidth), inner sep=2pt, outer sep=0.5pt},
	cross/.default={1pt}}
\setlist[enumerate]{itemsep=.3mm}
\setlist[itemize]{itemsep=.3mm}
\newcommand{\E}{\mathbb E}
\renewcommand{\P}{\mathbb P}
\newcommand{\Bin}{\textbf{Bin}}
\newcommand{\ep}{\varepsilon}
\renewcommand{\deg}{\text{deg}}
\newcommand{\one}{\textnormal{\textbf{1}}}
\newcommand{\zero}{\textnormal{\textbf{0}}}
\newcommand{\cp}{\textsf{\small{\textbf{CP}}}}
\newcommand{\tcp}{\widetilde{\textsf{\small{\textbf{CP}}}}}
\newcommand{\ddp}{\textsf{\small{\textbf{DP}}}}
\newcommand{\gw}{\textsf{\footnotesize{\textbf{GW}}}}
\newcommand{\gwc}{\textsf{\footnotesize{\textbf{GWC}}}}
\newcommand{\egw}{\textsf{\footnotesize{\textbf{EGW}}}}
\newcommand{\Exp}{\textnormal{Exp}}
\newcommand{\mX}{\mathcal X}
\newtheorem{theorem}{Theorem}[section]
\newtheorem{lemma}[theorem]{Lemma}
\newtheorem{proposition}[theorem]{Proposition}
\newtheorem{claim}[theorem]{Claim}
\newtheorem{thm}{Theorem}[section]
\newtheorem{conj}[thm]{Conjecture}
\newtheorem{cor}[thm]{Corollary}
\theoremstyle{definition}
\newtheorem{definition}[theorem]{Definition}
\newtheorem{remark}[theorem]{Remark}
\newtheorem*{remark-non}{Remark}
\begin{document}
	\title{\textbf{Survival and extinction of epidemics on random graphs with general degrees}}
	
	
	\author{Shankar Bhamidi}
	\address{Department of Statistics and Operations Research, University of North Carolina, Chapel Hill, NC 27599}
	\email{bhamidi@email.unc.edu}
	\thanks{S. Bhamidi supported by  NSF grants DMS-1613072, DMS-1606839 and ARO grant W911NF-17-1-0010}
	
	\author{Danny Nam}
	\address{Department of Mathematics, Princeton University, Princeton, NJ 08544}
	\email{dhnam@princeton.edu}
	\thanks{D. Nam supported by Samsung scholarship}
	
	\author{Oanh Nguyen}
	\address{Department of Mathematics, Princeton University, Princeton, NJ 08544}
	\email{onguyen@princeton.edu}
	
	\author{Allan Sly}
	\address{Department of Mathematics, Princeton University, Princeton, NJ 08544}
	\email{asly@princeton.edu}
	\thanks{A. Sly supported by NSF grant DMS-1352013, Simons Investigator grant and a MacArthur Fellowship}
	\maketitle
	
	\begin{abstract}
		In this paper, we establish the necessary and sufficient criterion for the contact process on Galton-Watson trees (resp. random graphs) to exhibit the phase of extinction (resp. short survival). We prove that the survival threshold $\lambda_1$ for a Galton-Watson tree is strictly positive if and only if its offspring distribution $\xi$ has an exponential tail, i.e., $\E e^{c \xi}<\infty $ for some $c>0$, settling a conjecture by Huang and Durrett \cite{hd18}. On the random graph with degree distribution $\mu$, we show that if $\mu$ has an exponential tail, then for small enough $\lambda$ the contact process with the all-infected initial condition survives for $n^{1+o(1)}$-time \textsf{whp} (short survival), while for large enough $\lambda$ it runs over $e^{\Theta(n)}$-time \textsf{whp} (long survival). When $\mu$ is subexponential, we prove that the contact process \textsf{whp} displays long survival for any fixed $\lambda>0$.
	\end{abstract}

\setcounter{tocdepth}{1}
\tableofcontents

\section{Introduction}

The contact process is a model of epidemics on networks introduced by Harris in 1974 \cite{harris74}.  Its transitions are given as follows:
\begin{itemize}
	\item Each vertex is either \textit{infected} or \textit{healthy}.
	
	\item Each infected vertex infects each of its neighbors independently at rate $\lambda$, and it is healed at rate $1$ independently of all the infections.
	
	\item Infection and recovery events in the process happen independently from vertex to vertex.
\end{itemize}

The phase diagrams of the contact processes on $\mathbb{Z}^d$ and on $\mathbb{T}_d$, the infinite $d$-ary tree, are well-understood. In particular,  the contact process on an infinite tree has drawn particular interest as it has two distinct phase transitions. In a series of beautiful works \cite{p92, l96, s96}, it was shown that the contact process on $\mathbb{T}_d$ for $d\geq 2$, with an initial infection at the root, has two different thresholds $0<\lambda_1<\lambda_2$ such that

\begin{itemize}
	\item (\textit{Extinction}) For $\lambda <\lambda_1$, the infection becomes extinct almost surely;
	
	\item (\textit{Weak survival}) For $\lambda\in (\lambda_1, \lambda_2)$, the infection survives with positive probability, but the root is infected finitely many times almost surely;
	
	\item (\textit{Strong survival}) For $\lambda >\lambda_2$, the infection survives and the root gets infected infinitely many times with positive probability.
\end{itemize}

A natural interest is then  to study the phase diagram of the contact process on Galton-Watson trees. In this paper, we establish the necessary and sufficient criterion for  $\lambda_1>0$. In particular, we provide the first known result for extinction in  Galton-Watson trees with unbounded offspring distribution.

\begin{thm}\label{thm:gw main}
	Consider the contact process on the Galton-Watson tree with offspring distribution $\xi$, and suppose that only the root of the tree is initially infected. If $\xi$ has an exponential tail, i.e., $\E e^{c\xi} <\infty$ for some $c>0$, then there exists $\lambda_0=\lambda_0(\xi)>0$ such that for all $\lambda<\lambda_0$, the process dies out almost surely.
\end{thm}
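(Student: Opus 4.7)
The main difficulty is structural: under any unbounded offspring distribution $\xi$ the Galton--Watson tree almost surely contains vertices of arbitrarily large degree, and near such a vertex $v$ of degree $d$ the contact process locally mimics the process on a star $K_{1,d}$, whose extinction time is of order $\exp(\Theta(\lambda^2 d))$ for small $\lambda$. In particular, the classical domination by a branching random walk cannot be applied directly, since it requires a uniform bound on the maximum degree. The plan is to exploit the exponential-tail hypothesis $\E e^{c\xi}<\infty$, which makes high-degree vertices exponentially rare, and to carry out a multi-scale analysis separating the ``tame'' and ``wild'' parts of the tree.

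Fix a threshold $K=K(\lambda)$ chosen so that $\lambda K$ is small. Declare a vertex \emph{tame} if its degree is at most $K$, and \emph{wild} otherwise. By the exponential-tail hypothesis, any non-root vertex is wild with probability at most $e^{-cK}$, so wild vertices form a very sparse subset of the tree with typical separations of order $K/\log m$, where $m=\E\xi$. On the subforest obtained by deleting the wild vertices, the contact process is dominated by a branching random walk of rate $\lambda$ per edge and death rate $1$; since the maximum degree is at most $K$, this BRW is strictly sub-critical when $\lambda K<1$, which yields a bound of the form
\[
\P(\text{infection traverses a tame path of length }L) \leq (C\lambda K)^L
\]
for some constant $C>0$ independent of the local tree structure.

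Next I would analyse the wild vertices. Around a wild $v$ of degree $d$, the contact process survives locally for time at most $e^{C\lambda^2 d}$ with high probability, and during this window it repeatedly attempts to escape into the surrounding tame region. Combining the local survival time with the above tame-escape estimate, the expected number of successful escapes of the infection from $v$ to another wild vertex is bounded by $e^{C\lambda^2 d}(C\lambda K)^{\Theta(K/\log m)}$, which can be made much smaller than $1$ by choosing $K$ appropriately large relative to $d$. Since the density of wild vertices of degree $d$ decays like $e^{-cd}$, summing over $d$ shows that with high probability the process never passes from one wild cluster to the next. Coarse-graining at the scale of wild vertices should then yield a dominating sub-critical branching process, and hence almost-sure extinction.

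The principal obstacle lies in making this coarse-graining rigorous. One must control the correlations among the many escape attempts across a given tame region, obtain an escape bound uniform in the wild-vertex degree $d$ that genuinely benefits from the exponential tail of $\xi$, and glue these local estimates into a global contraction. A natural implementation is an inductive argument along the generations of the tree: for each subtree rooted at $v$, bound the expected number of times infection crosses the edge from $v$ to its parent as a function of $\deg(v)$, and show these quantities contract from generation to generation once the exponential moment of $\xi$ is used to integrate out the variability of the local degree structure. If this contraction is achieved for all $\lambda$ below some $\lambda_0=\lambda_0(\xi)>0$, the resulting bound on the total expected number of infected vertices from the root gives the desired almost-sure extinction.
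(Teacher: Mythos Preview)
Your tame/wild decomposition has a genuine gap at the wild vertices. You write that the expected number of escapes from a wild vertex of degree $d$ to another wild vertex is at most
\[
e^{C\lambda^2 d}\,(C\lambda K)^{\Theta(K/\log m)},
\]
and claim this ``can be made much smaller than $1$ by choosing $K$ appropriately large relative to $d$.'' But $K$ is the fixed threshold that \emph{defines} wildness, while $d>K$ is unbounded: once the infection reaches a single wild vertex of very large degree $d$, the factor $e^{C\lambda^2 d}$ overwhelms any constant depending only on $K$ and $\lambda$. Invoking the rarity $e^{-cd}$ of such vertices does not help either, because rarity controls whether the infection \emph{encounters} such a vertex, not what happens afterwards; conditionally on reaching it, the degree is no longer random. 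Any coarse-graining at wild vertices therefore produces a Galton--Watson process whose offspring variable is unbounded and whose mean you have not controlled. Your final paragraph gestures toward an inductive contraction along generations, which is indeed the right direction, but none of the preceding tame/wild machinery feeds into it.

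The paper avoids the whole tame/wild dichotomy. It adds a permanently infected parent $\rho^+$ above the root and further modifies the process so that the root cannot recover while any descendant is infected; this makes the subtrees below the root evolve \emph{independently}. The key identity is then purely analytic: for the product chain, the stationary mass at $\zero$ satisfies $\pi^{(D)}(\zero)=\prod_{i=1}^D \pi_i(\zero)$, and relating $\pi(\zero)$ to expected excursion time yields the exact recursion
\[
1+\lambda D\,\E[\widetilde S^\otimes\mid \mathcal T_L]=\prod_{i=1}^D\bigl(1+\lambda\,\E[S_{L-1}\mid \mathcal T_{v_i}]\bigr).
\]
Taking expectations over the i.i.d.\ subtrees and applying Jensen together with the hypothesis $\E e^{c\xi}<\infty$ gives $\E S_L\le \E_{D\sim\xi}\exp(\lambda\,\E S_{L-1}\,D)\le e$ for all $L$, once $\lambda$ is small. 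The exponential tail is used in a single stroke to integrate out \emph{all} degrees simultaneously, so no case distinction on $d$ is ever needed. A parallel argument for a ``delayed'' process (which slows states at depth $r$ by a factor $\theta^{-r}$) then shows the infection depth exceeds $h$ with probability $O((K\lambda)^h)$, letting one pass from finite-depth trees to the infinite tree.
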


\noindent Recently, Huang and Durrett \cite{hd18} proved that on Galton-Watson trees, $\lambda_2=0$ if the offspring distribution $\xi$ is subexponential, i.e., $\E e^{c\xi}=\infty$ for all $c>0$. Combining Theorem \ref{thm:gw main} with their result, we have the complete characterization on the existence of extinction on Galton-Watson trees. Moreover, Theorem \ref{thm:gw main} establishes a stronger version of the following conjecture by Huang and Durrett:

\begin{conj}[\cite{hd18}]
	Suppose that $\P(\xi \geq k) = (1-p)^k$ for all $k$ larger than some constant $K$, and consider the contact process on the Galton-Watson tree with offspring distribution $\xi$. Then $\lambda_2$, the weak-strong survival threshold, is strictly positive.
\end{conj}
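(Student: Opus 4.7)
The plan is to dominate the contact process on the Galton-Watson tree by a subcritical branching process, with the exponential moment hypothesis ensuring subcriticality once $\lambda$ is small enough. The main new difficulty relative to the bounded-degree case is that a vertex of very high degree $d$ can locally sustain an infection for a time growing exponentially in $d$ (as on a star), so one must rule out that survival piggybacks on a chain of such hubs. The exponential tail of $\xi$ is what forces such chains to be too rare.

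First I would fix a truncation threshold $K=K(\lambda,c)$ and split vertices into \emph{light} ($\deg \leq K$) and \emph{heavy} ($\deg > K$). By Markov applied to $e^{c\xi}$,
\[
\P(\xi \geq K) \leq e^{-cK}\,\E e^{c\xi},
\]
so heavy vertices are exponentially rare. For each arrival of infection at a vertex $v$ of degree $d$, I would estimate the expected total number of infection transmissions that $v$ sends across a given incident edge during its effective infectious lifetime; call this quantity $F(d,\lambda)$. In the light regime this is $O(\lambda)$ by a direct computation, while in the heavy regime it is larger, bounded in terms of the lifetime of the contact process on a finite star. The point is that $F(d,\lambda)\cdot\P(\xi \geq d)$ must be summable in $d$, which holds once $\lambda$ is small enough that the star-lifetime factor is absorbed by the tail $e^{-cd}$.

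Next I would assemble these per-edge estimates into a branching-process upper bound. The expected number of infection events at distance $h$ from the root in the Galton-Watson tree satisfies a recursion whose one-step mean involves the per-edge transmission weight summed over offspring. Thanks to the preceding step this mean is finite and, for $\lambda$ small enough, strictly less than one; call its value $\rho(\lambda)<1$. Iterating shows the expected number of infection events at depth $h$ decays like $\rho(\lambda)^h$, so the total expected number of infection events is finite and a Borel--Cantelli argument gives almost sure extinction.

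The hard part will be making the per-vertex transmission estimate rigorous, because of the ``ping-pong'' re-infections between a heavy vertex and its neighbors: a vertex can be re-infected by a recently-infected child, prolonging its effective infectious time beyond a single $\Exp(1)$ window. I would set up a self-consistent fixed-point equation for the expected total transmission per incident edge and use the exponential moment of $\xi$, together with smallness of $\lambda$, to show this equation has a small solution. This is precisely the step in which the exponential tail is both used and tight, matching the Huang--Durrett result that subexponential tails force $\lambda_2=0$.
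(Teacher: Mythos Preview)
Your outline captures the right intuition---the exponential tail must beat the exponential star-survival time---but it has a genuine gap at precisely the point you flag as ``the hard part,'' and the light/heavy split does not help you close it.

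The quantity $F(d,\lambda)$, the ``expected total transmissions across a given incident edge during the effective infectious lifetime,'' is not a function of $d$ alone. A vertex $v$ of modest degree can have a child $u$ of enormous degree; the star around $u$ then keeps reinfecting $v$, so the total number of transmissions out of $v$ depends on the entire subtree below it, not on $\deg(v)$. Your light/heavy dichotomy classifies $v$ by its own degree, but the ping-pong problem is driven by the degrees of $v$'s descendants. Consequently the sentence ``$F(d,\lambda)\cdot\P(\xi\ge d)$ must be summable in $d$'' does not correspond to any well-defined one-step mean, and the branching-process domination is never actually set up. You acknowledge this and invoke a ``self-consistent fixed-point equation,'' but give no mechanism for deriving one; the naive recursion couples a vertex to all of its subtrees simultaneously and does not factor.

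The paper's key idea, which you are missing, is a specific modification of the process that \emph{manufactures} exact independence between subtrees: add a permanently infected parent $\rho^+$ above the root, and suppress recoveries at $\rho$ whenever any descendant is infected. Under this modification, while $\rho$ is infected the processes on the subtrees $\mathcal{T}_{v_1},\ldots,\mathcal{T}_{v_D}$ are genuinely independent root-added contact processes, and the stationary distribution of the product chain factors as $\pi^{(D)}(\zero)=\prod_i \pi_i(\zero)$. Translating stationary probabilities into expected excursion times yields the clean recursion
\[
\E[\widetilde S_L\mid D]\ \le\ \bigl(1+\lambda\,\E S_{L-1}\bigr)^{D}\ \le\ \exp\bigl(\lambda\,\E S_{L-1}\cdot D\bigr),
\]
after which the exponential-moment hypothesis $\E e^{cD}<\infty$ and Jensen close the induction $\E S_L\le e$. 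This is the fixed point you were reaching for, but it only emerges once the ping-pong has been neutralized by the frozen-root trick. A second, analogous argument on a \emph{delayed} process (transition rates scaled by $\theta^{r(x)}$ with $\theta=K\lambda$) gives exponential decay of the maximal infection depth, which is what passes the finite-depth bound to the infinite tree. Neither step uses a light/heavy split.

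In short: your plan is not wrong in spirit, but the decisive device---modifying the process to force subtree independence and thereby obtain a tractable recursion---is absent, and without it the branching-process bound remains a hope rather than a proof.
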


The challenge in understanding the infection time on trees with unbounded degree distributions is that the infection persists for a long time around high degree vertices as there are many neighbors from which it can be reinfected.  Indeed, it was shown in~\cite{berger2005spread} that the infection will last time $e^{c_\lambda d}$ in a neighborhood of a vertex of degree $d$ with positive probability for some $c_\lambda>0$.  Thus exponential tails on the degree distribution are needed for there to be few enough high degree vertices in the tree for extinction to be certain.


The next object of interest is the contact process on random graphs. For the contact process on the  Erd\H{o}s-R\'enyi random graph $\mathcal{G}_{n, d/n}$,  no rigorous results were known regarding its phase diagram---whether it shows short or long survival, or both. In this work, we prove that on $\mathcal{G}_{n, d/n}$, the contact process exhibits two different phases depending on $\lambda$, as a consequence of an analogous criterion on more general random graphs.

We focus on studying the contact process on the random graph with degree distribution $\mu$, which we  denote by $G \sim \mathcal{G}(n,\mu)$ (definitions given in Section  \ref{subsec: rg basic}).
For the contact process on $G\sim \mathcal{G}(n,\mu)$, our main goal is to study how long  the process survives in terms of the size of the graph.  The second result of this paper establishes the necessary and sufficient criterion for the contact process on $G\sim \mathcal{G}(n,\mu)$ to display the short survival phase. We assume throughout that $\mu$ satisfies
\begin{equation}\label{eq:condition on mu}
\E_{D\sim\mu} D(D-2) >0 \quad \textnormal{and} \quad \E_{D\sim\mu}D^2 <\infty,
\end{equation}
in order to ensure the existence of the giant component and take advantage of the \textit{configuration model}. For details on (\ref{eq:condition on mu}), see section  \ref{subsec: rg basic}.

\begin{thm}\label{thm: exp tail main}
	Suppose that $\mu$ satisfies (\ref{eq:condition on mu}) and there exists some constant $c>0$ such that $\E_{D\sim \mu} e^{cD}<\infty$. Consider the contact process on $G\sim \mathcal{G}(n,\mu)$ where all vertices are initially infected. Then there exist constants $0<\underline{\lambda}(\mu) \le\overline{\lambda}(\mu) <\infty$ such that the following hold:
	\begin{enumerate}
		\item [(1)] For all $\lambda <\underline{\lambda}$, the survival time of the process is at most $n^{1+ o(1)}$-time \textsf{whp}.
		
		\item [(2)] For all $\lambda> \overline{\lambda}$, the survival time of the process is $e^{\Theta(n)}$-time \textsf{whp}.
	\end{enumerate}
\end{thm}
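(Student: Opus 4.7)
The two parts of Theorem \ref{thm: exp tail main} require rather different techniques. For Part (2) (long survival), the plan is to exploit the fact that under (\ref{eq:condition on mu}), $G\sim\mathcal{G}(n,\mu)$ \textsf{whp} contains a giant component of linear size that, by local weak convergence, resembles a Galton--Watson tree with size-biased offspring distribution $\hat\mu$. For $\lambda$ sufficiently large, the contact process on this GW tree survives with positive probability from any vertex, so by a standard comparison with supercritical oriented site percolation on a block scale (in the spirit of Mountford--Mourrat--Valesin--Yao type arguments for tree-like graphs), the finite-graph contact process survives for time $e^{\Omega(n)}$ \textsf{whp}. The matching upper bound $e^{O(n)}$ is immediate from $|V|=n$ and the unit healing rate.

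For Part (1) (short survival), the plan combines Theorem \ref{thm:gw main} with a local-to-global argument via contact process duality. First, the neighborhood $B_R(v)$ of a vertex $v$ in $G$, for slowly growing $R=R(n)$, is close in total variation to a Galton--Watson tree of depth $R$ with offspring distribution $\mu$ at the root and $\hat\mu$ at internal vertices; both distributions retain an exponential tail, so Theorem \ref{thm:gw main} yields a threshold $\lambda_0>0$ below which the contact process on the limit tree dies out a.s.\ from a single infected root. Second, one must upgrade Theorem \ref{thm:gw main} to a quantitative extinction estimate by inspecting its proof---for instance, showing that the extinction time $T$ satisfies a stretched exponential tail $\P(T>t)\le \exp(-t^c)$ for some $c=c(\lambda)>0$. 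Third, one uses self-duality of the contact process,
\begin{equation*}
\P(v\in\xi_t^V)=\P\bigl(\xi_t^{\{v\}}\neq\emptyset\bigr),
\end{equation*}
together with a coupling of $\xi_t^{\{v\}}$ on $G$ to its counterpart on the local GW tree, to deduce $\P(\xi_t^{\{v\}}\neq\emptyset)\le n^{-\omega(1)}$ at $t=n^{o(1)}$. A union bound over $v$ then yields $\P(\xi_t^V\neq\emptyset)=o(1)$ at $t=n^{1+o(1)}$.

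The hardest step will be controlling reinfections that return to $B_R(v)$ from outside, which a purely local coupling cannot absorb. One resolution is to choose $R$ so that a typical excursion of the contact process is confined to $B_R(v)$ \textsf{whp} and iterate using monotonicity; another is to run the coupling up to the first escape time and restart from a fresh local GW tree with an updated infected boundary. A closely related issue is the presence of vertices of degree up to $\Theta(\log n)$ (unavoidable given only an exponential tail on $\mu$): around such a star the infection persists for time polynomial in the degree, which is $n^{o(1)}$ overall but pushes the short-survival time from the naive $O(\log n)$ valid on bounded-degree graphs up to the claimed $n^{1+o(1)}$. Quantifying the delay at each high-degree hub so that the total survival time remains $n^{1+o(1)}$ is precisely where the exponential-tail hypothesis on $\mu$ is used.
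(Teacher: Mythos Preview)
Your overall framework for Part~(1)---reduce survival from $\one_G$ to single-vertex survival via the graphical representation (your ``duality'') and then union-bound over $v$---matches the paper exactly. But the execution in the paper differs from your sketch in two substantive ways.

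First, the paper does \emph{not} prove a stretched-exponential tail $\P(T_v>t)\le\exp(-t^c)$; it proves something both weaker and easier: on good events $\mathcal{E}_1,\mathcal{E}_2$, the \emph{expected} survival time satisfies $\frac{1}{n}\sum_v\E[T_v\mid\mathcal{E}]\le B$ for a constant $B$ (Theorem~\ref{thm:sub}). Markov plus a union bound then gives $\P(T>Cn)\le B/C$, which is exactly $n^{1+o(1)}$. Your remark that high-degree stars force the bound from $O(\log n)$ up to $n^{1+o(1)}$ is not quite right: the extra factor of $n$ comes purely from the union bound over $n$ vertices, not from polynomial delays at hubs. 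In fact, your route via a stretched-exponential tail, if it worked, would give the stronger bound $n^{o(1)}$.

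Second, and more importantly, your sketch does not address cycles. The local neighborhood $N(v,\gamma\log n)$ is \emph{not} a tree with probability $\Theta(n^{-1})$, which is too large to ignore after a union bound over $n$ vertices. The paper handles this by (i) showing $N(v,\gamma\log n)$ contains at most one cycle \textsf{whp} (Lemma~\ref{lem:1cyc}), and (ii) introducing the GWC- and EGW-processes (Galton--Watson trees with a single cycle grafted on) and redoing the entire recursion for them (Proposition~\ref{prop:cp on egw}, Lemma~\ref{lem:dcp on egw}). Your ``restart from a fresh local GW tree with an updated infected boundary'' does not obviously survive this obstacle. The confinement of the infection to $N(v,\gamma\log n)$ itself is achieved by the \emph{delayed process} (Definition~\ref{def:dcp}), which is the paper's concrete implementation of your ``choose $R$ so that a typical excursion is confined to $B_R(v)$''; the key point is that this confinement must hold with probability $1-o(n^{-1})$, not just \textsf{whp}, again because of the union bound.

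For Part~(2), your route (block comparison with supercritical oriented percolation, \`a la Mountford--Mourrat--Valesin--Yao) is genuinely different from the paper's. The paper instead proves a purely combinatorial \emph{structural lemma} (Lemma~\ref{lm:structure}): $G$ \textsf{whp} contains a linear-size $(\alpha,R)$-embedded expander $W_0$ inside a bounded-degree subgraph, meaning every small $A\subset W_0$ satisfies $|N(A,R)\cap W_0|\ge 2|A|$. Given this, a direct Azuma argument (Lemma~\ref{lm:azuma:1}) shows that for large $\lambda$, $|X_t\cap W_0|$ keeps growing until it is $\Theta(n)$, and then survives for $e^{\Theta(n)}$ time. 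Your oriented-percolation comparison is plausible but you would need to identify the block structure on a random graph, which is essentially what the embedded-expander lemma does; the paper's approach has the advantage of being reusable verbatim for the subexponential case (Theorem~\ref{thm: subexp tail main}), where one additionally needs $R\le\delta j$ for arbitrarily small $\delta$.
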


\begin{thm}\label{thm: subexp tail main}
	Suppose that $\mu$ satisfies (\ref{eq:condition on mu}) and $\E_{D\sim \mu} e^{cD}=\infty$ for all $c>0$.  Consider the contact process on $G\sim \mathcal{G}(n,\mu)$ where all vertices are initially infected. Then for any fixed $\lambda>0$, the survival time of the process is $e^{\Theta(n)}$-time \textsf{whp}.
\end{thm}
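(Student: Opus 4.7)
The plan is to exploit the abundance of high-degree vertices guaranteed by the subexponential tail of $\mu$ to construct a metastable state that survives for time exponentially long in $n$. The backbone of the argument is the local star-survival bound recalled in the introduction from Berger, Borgs, Chayes, and Saberi~\cite{berger2005spread}: for any $\lambda > 0$ there exists $c = c(\lambda) > 0$ such that the contact process initialized with the center of a star of degree $d$ infected survives within the star until time $e^{c d}$ with probability bounded below by some $p_0 = p_0(\lambda) > 0$. Throughout, the all-infected initial condition ensures a positive-density seed at time $1$, which supplies the construction below.

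The first step is to choose a degree threshold $d_n = K \log n$, with $K = K(\lambda)$ a large constant, so that the per-hub local survival window $e^{c d_n} = n^{cK}$ comfortably dominates the transmission time $n^{O_\lambda(1)}$ along a path of length $O(\log n)$ in $G$. The subexponentiality hypothesis $\E e^{\epsilon D} = \infty$ for every $\epsilon > 0$ yields $\mu([d_n, \infty)) \geq n^{-o(1)}$, so \textsf{whp} the set $\mathcal{H}$ of ``hubs'' of degree at least $d_n$ satisfies $|\mathcal{H}| \geq n^{1-o(1)}$. A cleaning step based on the local weak convergence of $G$ to its Galton--Watson limit extracts a subset of clean hubs whose depth-$r_n$ neighborhoods are pairwise disjoint and embed a star of at least $d_n/2$ leaves; by the local survival bound, each clean hub independently retains infection within its private star for time $\tau_v \geq e^{c d_n / 2} = n^{cK/2}$ with probability at least $p_0$.

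During its survival window, each clean hub attempts to re-seed other hubs along the $O(\log n)$-length paths joining them in $G$, each attempt succeeding with probability at least $n^{-O_\lambda(1)}$. For $K$ sufficiently large, the expected number of hubs re-seeded per window tends to infinity, producing a supercritical oriented percolation on the hub set. I would then promote this to a block-level comparison by decomposing $G$ into $\Theta(n)$ small blocks, coupling the infection status of each block to a supercritical Bernoulli variable whose activation is furnished jointly by the long-lived high-degree hubs and by the linear-in-$n$ abundance of moderate-degree vertices, present since subexponentiality forces $\mu$ to have infinite support. A standard Peierls/contour estimate for supercritical oriented percolation then forces the collective extinction of the infection to require the conjunction of $\Theta(n)$ independent small-probability events, giving the lower bound $e^{\Theta(n)}$ on the survival time. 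The matching upper bound $e^{O(n)}$ is classical for the contact process on any $n$-vertex graph.

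The main obstacle is rendering the block-percolation comparison truly supercritical on a \emph{linear}-sized index set: restricting to the $n^{1-o(1)}$ hubs alone would only deliver $e^{n^{1-o(1)}}$ survival, falling short of the target $e^{\Theta(n)}$. Closing this gap calls for a two-scale argument that combines the sparse but polynomially long-lived degree-$d_n$ hubs with the dense but short-lived moderate-degree reservoirs, and the correlations between the corresponding local survival events must be controlled via an explicit edge-by-edge exploration in the configuration model. Making this combinatorial and probabilistic accounting work uniformly across the $\Theta(n)$ blocks is the technical heart of the proof.
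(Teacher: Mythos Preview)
Your proposal has a genuine gap that you yourself identify. By fixing a threshold $d_n = K\log n$ growing with $n$, you obtain only $n^{1-o(1)}$ hubs; as you concede, this yields survival time $e^{n^{1-o(1)}}$ at best, not $e^{\Theta(n)}$. The proposed remedy---a ``two-scale argument'' coupling the rare long-lived hubs with a dense moderate-degree reservoir---is neither specified nor executed, and it is not clear how the moderate-degree vertices, whose individual star-survival times are only $O(1)$, can be made to inherit the polynomial-in-$n$ persistence needed for a Peierls argument over $\Theta(n)$ blocks. The block-percolation comparison you sketch would require each block to renew itself with probability bounded away from zero over a uniform time window; you have not shown how the sparse hub set can drive this on a linear index set.

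The paper sidesteps this difficulty by never introducing a growing threshold. It works at a single \emph{constant} degree level $j = j(\lambda)$: subexponentiality is invoked only to guarantee that for any $\delta > 0$ one can choose a constant $j$ with $\mu[j,2j] \geq e^{-\epsilon_0 j}$ for arbitrarily small $\epsilon_0$, which allows the parameters of an embedded expander $W_0$ (Lemma~\ref{lm:structure:subex}) to be tuned so that its radius $R$ satisfies $R \leq \delta j$. Because $j$ is fixed, $|W_0| = \Theta(n)$. Each $W_0$-vertex has star-survival time $e^{cj}$, which for $\delta$ small enough (depending only on $\lambda$) dominates the time needed to send infection across the constant distance $R$; an Azuma-based doubling argument on $|X_t \cap W_0|$ then gives survival $e^{\Theta(n)}$ directly. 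In short, the ``moderate-degree reservoir'' \emph{is} the hub set, and no two-scale coupling is required.
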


\begin{remark}
	In the statements of Theorems \ref{thm: exp tail main} and \ref{thm: subexp tail main} (and Corollary \ref{cor: ER main} below as well), the notion \textsf{whp} covers the randomness coming from both the choice of graph $G$ and the contact process. Therefore, they should be understood as
	\begin{center}
		There exists an event $\mathcal{A}$ over the choice of $G$ which occurs \textsf{whp}, \\such that the statement holds \textsf{whp} over the contact process on $G$ given $G\in \mathcal{A}$.
	\end{center}
\end{remark}

For the case of  Erd\H{o}s-R\'enyi random graphs, which are contiguous to $\mathcal{G}(n,\mu)$ with $\mu=\text{Pois}(d)$ (see Section \ref{subsec: rg basic} for details), we can show that the contact process on $\mathcal{G}_{n, d/n}$ exhibits two different phases, as a consequence of Theorem \ref{thm: exp tail main}.

\begin{cor}\label{cor: ER main}
	For any fixed $d>1$, consider the contact process on $G\sim \mathcal{G}_{n,d/n}$ where all vertices are initially infected. Then there exist constants $0<\underline{\lambda}(d) \le \overline{\lambda}(d) <\infty$ such that the following holds:
	\begin{enumerate}
		\item [(1)] For all $\lambda <\underline{\lambda}$, the survival time of the process is at most $n^{1+ o(1)}$-time \textsf{whp}.
		
		\item [(2)] For all $\lambda> \overline{\lambda}$, the survival time of the process is $e^{\Theta(n)}$-time \textsf{whp}.
	\end{enumerate}
\end{cor}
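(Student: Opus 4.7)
The plan is to deduce the corollary as a direct application of Theorem \ref{thm: exp tail main} with $\mu = \text{Pois}(d)$, combined with the standard contiguity between the Erd\H{o}s-R\'enyi graph $\mathcal{G}_{n, d/n}$ and the configuration-model graph $\mathcal{G}(n, \text{Pois}(d))$.

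First, I would verify that $\mu = \text{Pois}(d)$ satisfies every hypothesis of Theorem \ref{thm: exp tail main}. For $D \sim \text{Pois}(d)$, one has $\mathbb{E} D^2 = d^2 + d < \infty$ and $\mathbb{E} D(D-2) = d^2 - d = d(d-1)$, which is strictly positive precisely because $d > 1$ is assumed. The exponential-tail condition holds trivially, since
\[
\mathbb{E} e^{cD} = \exp\bigl(d(e^c - 1)\bigr) < \infty \quad \text{for every } c > 0.
\]
Thus Theorem \ref{thm: exp tail main} supplies thresholds $0 < \underline\lambda(d) \leq \overline\lambda(d) < \infty$ such that, writing $H \sim \mathcal{G}(n, \text{Pois}(d))$ and starting from the all-infected configuration, the survival time is at most $n^{1+o(1)}$ whp for $\lambda < \underline\lambda$ and is $e^{\Theta(n)}$ whp for $\lambda > \overline\lambda$.

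Next, I would transfer these whp statements from $\mathcal{G}(n, \text{Pois}(d))$ to $\mathcal{G}_{n, d/n}$ by contiguity. The standard route is to condition on the empirical degree sequence: both models, conditioned on a graphical degree sequence, are uniform over simple graphs realizing that sequence, so the conditional law of the contact process depends only on the degree sequence. In $\mathcal{G}_{n, d/n}$ the empirical degree distribution concentrates around $\text{Pois}(d)$ with the same behavior as an i.i.d.\ Poisson sample conditioned on its sum being even and graphical, which matches the configuration-model input in $\mathcal{G}(n, \text{Pois}(d))$. A standard contiguity argument (under the finite-second-moment assumption, which holds here) then yields that any sequence of events $A_n$ with $\mathbb{P}_{\mathcal{G}(n, \text{Pois}(d))}(A_n) \to 1$ also satisfies $\mathbb{P}_{\mathcal{G}_{n, d/n}}(A_n) \to 1$. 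Applying this to the graph-plus-contact-process events witnessed by Theorem \ref{thm: exp tail main} delivers both claims (1) and (2) of the corollary.

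The only mild technical point, rather than a genuine obstacle, is that $A_n$ lives on the joint space of the graph and the contact process. Since the contact process is a measurable functional of the graph together with an independent Poisson clock environment, a whp statement on the joint space reduces by Fubini to a whp statement on graphs, which then transfers via contiguity as above. The contiguity of $\mathcal{G}_{n, d/n}$ and $\mathcal{G}(n, \text{Pois}(d))$ under $\mathbb{E} D^2 < \infty$ is developed in Section \ref{subsec: rg basic}, so the corollary follows immediately.
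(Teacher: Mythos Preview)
Your proposal is correct and matches the paper's own argument: apply Theorem~\ref{thm: exp tail main} with $\mu=\mathrm{Pois}(d)$ (whose hypotheses you verify) and then transfer the \textsf{whp} conclusions to $\mathcal{G}_{n,d/n}$ via contiguity. The only minor inaccuracy is that the contiguity between $\mathcal{G}_{n,d/n}$ and the Poisson configuration model is not actually established in Section~\ref{subsec: rg basic} but is cited from \cite{k06} in the paper's proof; otherwise your reasoning, including the Fubini remark for handling the joint graph-plus-process event, is exactly on point.
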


\begin{remark}\label{rmk:onevertex}
	One might be interested in studying the contact process on $G \sim \mathcal{G}(n,\mu)$, with the initial condition such that only a single vertex is infected. When  a uniformly random vertex in $G$ is infected initially while all the other ones are healthy,  we will later see that Theorems \ref{thm: exp tail main} and \ref{thm: subexp tail main} continue to hold, if we change ``\textsf{whp}" to ``with positive probability" at the end of the statements of Theorems \ref{thm: exp tail main}-(2) and \ref{thm: subexp tail main}. To be precise, by ``with positive probability", we mean \textsf{whp} over the choice of $G\sim \mathcal{G}(n,\mu)$, with positive probability over the choice of the initially infected vertex $v$ and with positive probability over the contact process.  Proofs are given in Remark \ref{rmk:onevertex:ex} for exponential distributions and Remark \ref{rmk:onevertex:subex} for subexponential distributions.
\end{remark}

To sum up, we establish a ``universality" criterion for the contact process on Galton-Watson trees (resp. random graphs with given degree distributions), on the existence of the phase of extinction (resp. short survival).  Our methods do not give sharp estimates on the critical value and it is an interesting open problem to determine the location of the phase transition. We also believe that the two critical values in Theorems \ref{thm:gw main} and \ref{thm: exp tail main} coincide. Precisely, we conjecture that  $\lambda_1(\gw(\mu') )  = \lambda_c (\mathcal{G}(n,\mu))$, where
\begin{itemize}
	\item $\lambda_1(\gw(\mu'))$ is the death-survival threshold of the Galton-Watson tree with offspring distribution $\mu'$, the size-biased distribution of $\mu$ (see Section \ref{subsec: rg basic} for details);
	
	\item $\lambda_c (\mathcal{G}(n,\mu))$ is the short-long survival threshold of $\mathcal{G}(n,\mu)$;
	
\end{itemize}

\subsection{Related works}

In \cite{harris74}, Harris first introduced the contact process on $\mathbb{Z}^d$ and showed that the death-survival threshold $\lambda_c(\mathbb{Z}^d)$ satisfies $0<\lambda_c( \mathbb{Z}^d) <\infty$ for any $d$. Building upon this work, the model on $\mathbb{Z}^d$ has been studied intensively and we refer to Liggett~\cite{liggett:sis} for a survey of results. Pemantle \cite{p92} studied the contact process on the infinite $d$-ary tree $\mathbb{T}_d$ and showed that it exhibits three different phases---extinction, weak survival and strong survival---for $d\geq 3$. This result was later generalized by Liggett \cite{l96} for the case $d=2$. Stacey~\cite{s96} gave a shorter proof that applies for any $d\geq 2$.

Less is known for the contact process on general Galton-Watson trees. Recently, Huang and Durrett \cite{hd18} proved that on Galton-Watson trees, $\lambda_2=0$ if the offspring distribution $\xi$ is subexponential. Along with Theorem \ref{thm:gw main}, we now have the complete characterization of the existence of extinction in the contact process on Galton-Watson trees.

There has been considerable work studying the phase transitions of survival times on large finite graphs. Stacey \cite{s01} and Cranston et. al. \cite{cmmv14} studied the contact process on the $d$-ary tree $\mathbb{T}_d^h$ of depth $h$ starting from the all-infected state, and their results show that the survival time $T_h$, as $h\rightarrow \infty$, satisfies (i) $T_h/h \rightarrow \gamma_1$ in probability if $\lambda < \lambda_2(\mathbb{T}_d)$; (ii) $|\mathbb{T}_d^h|^{-1}{\log \E T_h} \rightarrow \gamma_2$ in probability and $T_h/\E T_h \overset{d}{\rightarrow} \Exp(1)$ if $\lambda >\lambda_2(\mathbb{T}_d)$, where $\gamma_1,\gamma_2$ are constants depending on $d,\lambda$.  In \cite{dl88, ds88, m93}, similar results were established for the case of the lattice cube $\{1,\ldots,n \}^d$.

Recently in work of  Mourrat and Valesin~\cite{mv16} and Lalley and Su~\cite{ls17}, it was shown that for any $d\geq 3$, the contact process on the random $d$-regular graph, whose initial configuration is the all-infected state, exhibits the following phase transition:

\begin{itemize}
	\item (\textit{Short survival}) For $\lambda <\lambda_1(\mathbb{T}_d)$, it survives for $O(\log n)$-time \textsf{whp}.
	
	\item (\textit{Long survival}) For $\lambda >\lambda_1(\mathbb{T}_d)$, it survives for $e^{\Theta(n)}$-time \textsf{whp}.
\end{itemize}

\noindent Moreover in \cite{ls17}, a ``cutoff phenomenon" of the fraction of infected vertices was established. In \cite{mv16}, the same result as above is proven for   $G\sim \mathcal{G}(n,\mu)$ with bounded $\mu$ (Theorems 1.3 and 1.4).  For an  unbounded degree distribution $\mu$, Chatterjee and Durrett  \cite{cd09} proved that if $\mu$ obeys a power law, then the contact process always displays long survival for any $\lambda>0$, though their survival time was slightly weaker than exponential ($e^{n^{1-\delta}}$ for any $\delta>0$). This result was later generalized in \cite{mmvy16} to an exponential survival.  Our Theorems \ref{thm: exp tail main} and \ref{thm: subexp tail main} extend the aforementioned results to any general $\mu$. In \cite{sv2017}, a long survival on general graphs for $\lambda > \lambda_c(\mathbb{Z})$ was settled, with survival time at least $\exp(|G|/\{\log |G| \}^\kappa )$ for any $\kappa>1$. \cite{cd17} studied the contact process under similar settings as Theorem \ref{thm: exp tail main}-(2) and Corollary \ref{cor: ER main}-(2) with additional assumptions on the degree distribution and showed that the expected survival time is exponentially large in $n$.

On random graphs with power-law degree distributions, metastability properties on the size of infected vertices were studied in \cite{mvy13, cs15}. For other types of random graphs, recently in~\cite{ms16} it was shown that the contact process on random geometric graphs exhibits both short and long survival.  

\subsection{Main techniques}\label{subsec:maintec}
We  sketch the ideas in the paper before giving the full proofs. The analysis of the subcritical contact process (i.e., extinction and short survival) relies on three main ideas  which we now describe. Here, we assume that the offspring distribution of the Galton-Watson trees and the degree distribution of the random graphs have exponential tails.

\vspace{2mm}
\noindent $\blacktriangleright$ \textit{Modified process: preventing recoveries at the root.}  ~One main difficulty  in studying the contact process on Galton-Watson trees  comes from complicated dependencies inside the given tree. To overcome this obstacle, we consider the following modification of the process:
\begin{itemize}
	\item A vertex is added above the root that is always infected.  As such the chain no longer has an absorbing state.
	\item Recoveries at the root only occur when none of its descendants are infected at the time of recovery. All the other infections and recoveries are the same as the original process.
\end{itemize}
In the modified process, when the root is infected, the processes inside each subtree from a child of the root behave independently.  By relating the stationary probability of the root being uninfected to the extinction time we develop a recursive relationship over the tree height. As a result, we show that the expected survival time of the contact process with small enough $\lambda$ is bounded by a constant, for any finite-depth Galton-Watson trees.

\vspace{2mm}
\noindent $\blacktriangleright$ \textit{Exponential decay of infection depth: the delayed process}. ~To relate the finite Galton-Watson trees to the infinite tree, we prove that the probability that the infection goes deeper than depth $h$ decays exponentially in $h$. To this end, we introduce the \textit{delayed process}, which spends exponentially longer time at states containing deeper infections. Based on a similar argument introduced above, we show that the expected survival time of the delayed process on the Galton-Watson tree is bounded by a constant if $\lambda$ is small enough. This will imply that in the original process, the infection can go deeper than $h$ at most with an exponentially small probability in $h$.
Thus, the contact process on the infinite Galton-Watson tree can be regarded as that on a large-depth finite tree, and hence we establish Theorem \ref{thm:gw main}.

\vspace{2mm}
\noindent $\blacktriangleright$ \textit{Coupling the local neighborhoods of $\mathcal{G}(n,\mu)$.} ~To study the contact process on $G\sim \mathcal{G}(n,\mu)$ exhibiting short survival, we attempt to dominate the local neighborhoods of the graph by Galton-Watson trees, in terms of isomorphic inclusions of graphs. However,  some of the local neighborhoods $N(v,r)$ will contain a cycle, and hence we introduce modified Galton-Watson type processes that contain a cycle and behave similarly as the Galton-Watson trees. After dominating the local neighborhoods of $G$ by the new branching processes, we study the contact process on the latter graphs and bound its survival time based on the aforementioned ideas.

\vspace{2mm}
On the other hand, when studying the long survival for $G\sim \mathcal{G}(n,\mu)$, we rely on the existence of what we call \textit{embedded expanders} inside the graph. Roughly speaking, we call a subset $W$ of vertices in $G$ an \textit{embedded expander}, if (i) all vertices in $W$ have high degree, say, at least $M$,  and (ii) distance $R$-neighborhood of every subset $W' \subset W$ of at most a certain size intersects with $W$ at more than $2|W'|$ vertices (See a precise definition in Lemma \ref{lm:structure}).  As noted above, we expect an infection at a degree $M$ vertex to last for at least time exponential in $M$.

Intuitively, if a subset $W'$ of an embedded expander $W$ is infected,  then the infections inside $W'$ would happen repeatedly for a reasonably long time due to its large degrees, and hence it will not die out \textsf{whp} before infecting its neighbors within distance $R$. Thus, if $W$ is an embedded expander, then the infection is likely to spread over $2|W'|$ vertices after some time. In Sections~ \ref{sec:proof:thm:ex} and \ref{sec:proof:thm:subex} we make this intuition rigorous and prove the existence of an embedded expander inside $G$. For the latter argument, we partially rely on the \textit{Cut-off line algorithm} (Definition \ref{def:cut-off line}) which was introduced in~\cite{k06}  to find the cores of random graphs.

For Theorem~\ref{thm: subexp tail main}, we show that if $\mu$ is subexponential, then we can find an embedded expander in $G$ such that $R$ is arbitrarily smaller than $M$. Therefore, even if $\lambda$ is very small, it will be possible for infections in the embedded expander to travel the distance of $R$ before dying out.

\subsection{Organization}

The rest of the paper is organized as follows. After we set up notations and review some preliminary facts in \S \ref{sec:prelim}, we prove Theorem \ref{thm:gw main},  Theorem \ref{thm: exp tail main}-(1), Theorem \ref{thm: exp tail main}-(2) in \S \ref{sec:sub gw}, \S \ref{sec:sub rg}, and \S \ref{sec:proof:thm:ex} respectively.  In \S \ref{sec:proof:thm:subex}, we prove Theorems \ref{thm: subexp tail main}. In \S \ref{sec:structure}, we prove a structural lemma on the embedded expanders mentioned above which plays a crucial role in establishing Theorems \ref{thm: exp tail main}-(2) and \ref{thm: subexp tail main}.  

\subsection{Notations} For two positive sequences $(a_n)$ and $(b_n)$, we say that $a_n = O(b_n)$ or $b_n =\Omega(a_n)$ if there exists a constant $C$ independent of $n$ such that $a_n\le C b_n$ for all $n$. If $a_n = O(b_n)$ and $b_n = O(a_n)$, we write $a_n = \Theta(b_n)$.

\section{Preliminaries}\label{sec:prelim}

In this section, we set up notation and briefly describe some basic properties of the contact process and random graphs which will be used throughout the paper.

For a graph $G=(V,E)$ (finite or infinite),  the contact process on $G$ with infection rate $\lambda$ is the continuous-time Markov chain on the state space $\{0,1\}^V$, where $0$ (resp. $1$) corresponds to the healthy (resp. infected) state. If the initial state is $\one_A$, i.e., the vertices in $A\subset V$ are infected, we denote the process by $$(X_t) \sim \cp^\lambda(G;\one_A).$$
We will frequently use the notation $\zero$ for the all-healthy state $\zero = \one_{\emptyset}$, and write $\one_v = \one_{\{v\}}$ if the state has a single infected vertex $v$. The transition rule of the process can be described as follows:

\begin{itemize}
	\item $X_t$ becomes $X_t - \one_v$ with rate $1$ for each $v$ such that $X_t(v)=1$.
	
	\item $X_t$ becomes $X_t + \one_u$ with rate $\lambda N_t(u)$ for each $u$ with $X_t(u)=0$, where $N_t(u)$ is the number of neighbors $v$ of $u$ with $X_t(v)=1$.
\end{itemize}
We sometimes write $\cp^\lambda (G)$ when the initial condition is unnecessary. For convenience, we usually denote the state space by $\{0,1\}^G$.

\subsection{Graphical representation of contact processes}

We briefly discuss a coupling method of the contact processes using a  \textit{graphical representation} based on Chapter 3, section 6 of \cite{liggett:ips}. The idea is to record the infections and recoveries in
$\cp^\lambda(G;\one_A)$ on the space-time domain $G \times \mathbb{R}_+$. Define i.i.d. Poisson processes $\{N_v(t)\}_{v\in V}$ with rate $1$ and i.i.d. Poisson processes $\{N_{\vec{uv}}(t) \}_{\vec{uv}\in \overrightarrow{E}}$ with rate $\lambda$, where  $\overrightarrow{E}=\{\vec{uv},\;\vec{vu}:(uv)\in E \}$ is the set of directed edges. Further, we let $\{N_v(t)\}_{v\in V}$ and $\{N_{\vec{uv}}(t) \}_{\vec{uv}\in \overrightarrow{E}}$ to be mutually independent. Then the graphical representation is defined as follows:
\begin{enumerate}
	\item Initially, we have the \textit{empty} domain $V \times \mathbb{R}_+$.
	
	\item For each $v\in V$, mark  $\times$ at the point $(v,t)$, at each event time $t$ of $N_v(\cdot)$.
	
	\item For each $\vec{uv}\in \overrightarrow{E}$, add an arrow from $(u,t)$ to $(v,t)$, at each event time $t$ of $N_{\vec{uv}}(\cdot)$.
\end{enumerate}

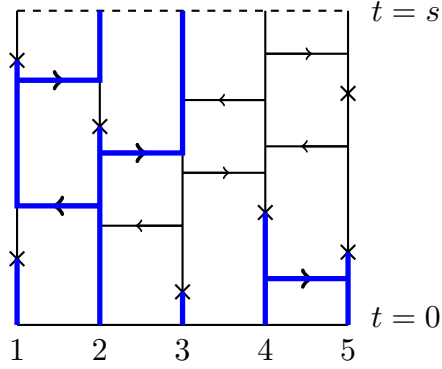
\begin{figure}
	\centering
	\begin{tikzpicture}[thick,scale=1.1, every node/.style={transform shape}]
	\foreach \x in {0,...,4}{
		\draw[black] (\x,0) -- (\x,3.8);
	}
	\draw[black] (0,0)--(4,0);
	\draw[dashed] (0,3.8)--(4,3.8);
	\draw (0,.8) node[cross=2.2pt,black]{};
	\draw (0,3.2) node[cross=2.2pt,black]{};
	\draw (1,2.4) node[cross=2.2pt,black]{};
	\draw (2,.4) node[cross=2.2pt,black]{};
	\draw (3,1.36) node[cross=2.2pt,black]{};
	\draw (4,.88) node[cross=2.2pt,black]{};
	\draw (4,2.8) node[cross=2.2pt,black]{};
	
	\draw[->,line width=.6mm,black] (0,2.96)--(.57,2.96);
	\draw[black] (0,2.96)--(1,2.96);
	\draw[->,line width=.6mm,black] (1,1.44)--(.43,1.44);
	\draw[black] (1,1.44)--(0,1.44);
	\draw[->,line width=.6mm,black] (1,2.08)--(1.57,2.08);
	\draw[black] (1,2.08)--(2,2.08);
	\draw[->,black] (2,1.2)--(1.43,1.2);
	\draw[black] (1,1.2)--(2,1.2);		
	\draw[->,black] (2,1.84)--(2.57,1.84);
	\draw[black] (2,1.84)--(3,1.84);
	\draw[->,black] (3,2.72)--(2.43,2.72);
	\draw[black] (2,2.72)--(3,2.72);		
	\draw[->,line width=.6mm,black] (3,.56)--(3.57,.56);
	\draw[black] (3,.56)--(4,.56);
	\draw[->,black] (3,3.28)--(3.57,3.28);
	\draw[black] (3,3.28)--(4,3.28);
	\draw[->,black] (4,2.16)--(3.43,2.16);
	\draw[black] (3,2.16)--(4,2.16);			
	\node at (4.7,0.1) {$t=0$};
	\node at (4.7,3.8) {$t=s$};
	\node at (0,-.3) {$1$};
	\node at (1,-.3) {$2$};
	\node at (2,-.3) {$3$};
	\node at (3,-.3) {$4$};
	\node at (4,-.3) {$5$};
	\draw [line width=.07cm, blue] (0,0) -- (0,.8);
	\draw [line width=.07cm, blue] (1,0) -- (1,2.4);	
	\draw [line width=.07cm, blue] (1,1.44)--(0,1.44) -- (0,3.2);
	\draw [line width=.07cm, blue] (0,2.96) -- (1,2.96)--(1,3.8);
	\draw [line width=.07cm, blue] (1,2.08) -- (2,2.08)--(2,3.8);	
	\draw [line width=.07cm, blue] (2,0) -- (2,.4);
	\draw [line width=.07cm, blue] (3,1.36) -- (3,0);
	\draw [line width=.07cm, blue] (4,0) -- (4,.88);
	\draw [line width=.07cm, blue] (3,.56) -- (4,.56);			
	
	\end{tikzpicture}
	\caption{A realization of the contact process on the interval $V=\{1,\ldots,5\}$, with initial condition $X_0=\one_V$. The blue lines describe the spread of infection. We see that $X_s = \one_{\{2,3\}}$} \label{fig1}
\end{figure}

\noindent This gives a geometric picture of $\cp^\lambda(G;\one_A)$, and further provides a coupling of the processes over all possible initial states. Figure \ref{fig1} tells us how to interpret the infections at time $t$ based on this graphical representation.  We point out two lemmas which are easy consequences of the above construction. For proofs, see, e.g., \cite{liggett:ips}.

\begin{lemma}\label{lem:graph rep1}
	Suppose that we have the aforementioned coupling among the contact processes on a graph $G$. Let $T_v$ and $T_G$ be the first time when $\cp^\lambda(G;\one_v)$ and $\cp^\lambda(G;\one_G)$ reach the all-healthy state $\zero$, respectively. Then we have $T_G = \max \{T_v:v\in G \}$.
\end{lemma}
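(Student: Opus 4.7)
The plan is to leverage the monotonicity built into the graphical construction, proving the stronger pointwise statement
\[
X_t^G \;=\; \bigvee_{v \in G} X_t^v \qquad \text{for every } t \geq 0,
\]
where $X_t^A$ denotes the state at time $t$ of $\cp^\lambda(G;\one_A)$ read off the common Poisson data and $\bigvee$ is coordinate-wise maximum. The inequality $X_t^G \geq X_t^v$ is immediate from attractiveness: both processes are obtained from the same marks $\times$ and arrows, and enlarging the initial set can only preserve infections. For the reverse inequality, $X_t^G(u)=1$ means that in the graphical picture there is an upward trajectory from some $(w,0)$ with $w\in G$ to $(u,t)$ that avoids every $\times$ mark and follows arrows whenever it switches vertices; letting $v$ be the starting vertex of such an infection path, the same path is present irrespective of the initial condition, so it also witnesses $X_t^v(u)=1$.

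The second ingredient is that $\zero$ is absorbing in the graphical representation: if $X_s^v = \zero$, then any infection path in the graphical picture from $(v,0)$ to $(u,t)$ with $t>s$ would have to pass through $V\times\{s\}$ at some vertex $w$, forcing $X_s^v(w)=1$, a contradiction. Hence $X_t^v = \zero$ for all $t \geq T_v$. Combining the two steps,
\[
\{t : X_t^G = \zero\} \;=\; \bigcap_{v \in G}\{t : X_t^v = \zero\} \;=\; [\,\max_{v\in G} T_v,\,\infty\,),
\]
and taking the infimum gives $T_G = \max_{v\in G} T_v$.

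There is no real obstacle here: the argument is a standard consequence of the construction in Section 6 of Chapter 3 of \cite{liggett:ips}. The only point requiring care is the bijection between infections at $(u,t)$ in the coupled processes and infection paths from initially infected vertices; once that is laid out, the rest is set-theoretic.
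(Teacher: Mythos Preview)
Your argument is correct and is precisely the standard additivity/monotonicity argument via the graphical representation; the paper does not supply its own proof but simply refers to \cite{liggett:ips}, where this same reasoning (infection paths plus absorption at $\zero$) is the intended one.
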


\begin{lemma}\label{lem:graph rep2}
	For a given graph $G=(V,E)$ and any $A\subset V$, let $(X_t)\sim \cp^\lambda (G;\one_A)$. Consider any (random) subset $\mathcal{I}$ of $ \mathbb{R}_+$, and  define $(X_t')$ to be the coupled process of $(X_t)$ that has the same initial state, infections and recoveries, except that the recoveries at a fixed vertex $v$ are ignored at times $t\in \mathcal{I}$. Then for any $t\geq 0$, we have $X_t \leq X_t'$, i.e., $X_t(v) \leq  X_t'(v)$ for all $v$.
\end{lemma}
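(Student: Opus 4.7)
The plan is to realize both processes $(X_t)$ and $(X_t')$ on a single probability space using the same graphical representation described just above the lemma. That is, I use one family of i.i.d.\ Poisson processes $\{N_v\}_{v\in V}$ and $\{N_{\vec{uv}}\}_{\vec{uv}\in\overrightarrow{E}}$ and the common initial state $\one_A$; the process $(X_t)$ follows all recovery crosses and infection arrows, while $(X_t')$ follows the same crosses and arrows with the single exception that crosses at the spatial location $v$ whose time stamp falls in $\mathcal{I}$ are erased before reading off the dynamics. With this coupling in place, I aim to prove $X_t(w) \le X_t'(w)$ for every $w \in V$ and every $t \ge 0$.

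I would carry out the proof by induction over the almost surely discrete, well-ordered sequence of event times on any compact interval $[0,T]$ (on a locally finite graph these accumulate only at $\infty$, which is the standard regime for the graphical construction). The base case $X_0 = X_0' = \one_A$ is immediate. For the inductive step, assume $X_{s-} \le X_{s-}'$ and consider an event at time $s$. There are three cases to verify:
\begin{itemize}
	\item An infection arrow from $u$ to $w$: both processes update coordinate $w$ to $\max(X_{s-}(w), X_{s-}(u))$. Since the maximum is coordinatewise monotone, $X_s \le X_s'$.
	\item A recovery mark at some $u \ne v$, or at $v$ with $s \notin \mathcal{I}$: both processes set the corresponding coordinate to $0$, so the inequality persists.
	\item A suppressed recovery, i.e.\ a mark at $v$ with $s \in \mathcal{I}$: only $(X_t)$ sets $X_s(v) = 0$, while $X_s'(v) = X_{s-}'(v)$; the inequality $0 \le X_s'(v)$ is trivial, and all other coordinates are unchanged.
\end{itemize}
Between events neither process moves, so the pointwise inequality is preserved up to the next event time.

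There is no real obstacle here beyond bookkeeping: the graphical coupling automatically gives monotonicity in the initial configuration, and suppressing some recovery marks is, from the point of view of the update rule, the same as feeding the process a larger configuration at each suppressed time stamp. The only mild subtlety is well-definedness of the coupled dynamics when $G$ is infinite, which is handled by the standard locally finite graphical construction (as in Liggett \cite{liggett:ips}); once that is granted, the inductive argument runs verbatim on each compact interval $[0,T]$, giving $X_t \le X_t'$ for all $t \ge 0$ almost surely.
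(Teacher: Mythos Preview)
Your argument is correct and is exactly the standard monotone-coupling proof via the graphical representation. The paper itself does not supply a proof of this lemma; it simply remarks that Lemmas~\ref{lem:graph rep1} and~\ref{lem:graph rep2} are easy consequences of the graphical construction and refers the reader to Liggett~\cite{liggett:ips}. Your write-up is precisely what such a reference would unpack to, so there is nothing to compare.
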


\subsection{Random graphs}\label{subsec: rg basic}

Let $\mu$ be a probability distribution on $\mathbb{N}$, and $n$ be any integer. The random graph $\mathcal{G}(n,\mu)$ with degree distribution $\mu$  is defined by the following procedure:
\begin{itemize}
	\item Let $d_1,\ldots , d_n$ be $n$ i.i.d. samples from $\mu$ conditioned on $\{\sum_{i=1}^n d_i \textnormal{ is even} \}$.
	
	\item Sample $G$ by taking a simple graph on $n$ vertices with degrees $\{ d_i\}_{i=1}^n$, uniformly at random among all possible choices.
\end{itemize}

\noindent Further, we consider a variant of $\mathcal{G}(n,\mu)$ which is  constructed as follows:
\begin{itemize}
	\item Sample $d_1,\ldots , d_n$ as above. Here, $d_i$ denotes the number of \textit{half-edges} attached to vertex $i$.
	
	\item Pair all the half-edges uniformly at random.
\end{itemize}
The resulting graph $G$ is called the configuration model, which is denoted by $\mathcal{G}_{\textsf{cf}}(n,\mu)$. The difference here is that $G$ is not necessarily a simple graph. However, if the second moment of $\mu$ is finite, we have the following contiguity between the two models. For details, see, e.g., \cite{vanderhofstad17}, Chapter 7.

\begin{lemma}[\cite{j09, vanderhofstad17}]
	Suppose that $\E_{D\sim \mu} D^2<\infty$. Then, uniformly in $n$, we have $$\P_{G\sim \mathcal{G}_{\textsf{cf}}(n,\mu)} (G \textnormal{ is simple}) \in (0,1).$$
	In particular, for any subset $A_n$ of graphs with $n$ vertices,
	$$\P_{G\sim \mathcal{G}_{\textsf{cf}}(n,\mu)} (G\in A_n) \rightarrow 0 \quad\textnormal{implies}\quad \P_{G\sim \mathcal{G}(n,\mu)} (G\in A_n) \rightarrow 0.$$
\end{lemma}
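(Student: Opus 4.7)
The plan is to derive the ``contiguity'' (second) assertion from the ``simplicity'' (first) assertion by the standard conditioning identity for the configuration model, and then to prove the first assertion by a Poisson-approximation argument for the counts of self-loops and multi-edges.

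For the transfer, I would use the fact that $\mathcal{G}(n,\mu)$ has the same law as $\mathcal{G}_{\textsf{cf}}(n,\mu)$ conditioned on being simple. This is because, conditional on the degree sequence $(d_1,\dots,d_n)$, the configuration model produces any two simple graphs with that degree sequence with exactly the same probability (the uniformizing factor being the number of pairings of half-edges that realize a given simple graph, which depends only on the multiset $\{d_i\}$). Consequently, for any set $A_n$,
$$\P_{\mathcal{G}(n,\mu)}(G\in A_n) \;=\; \frac{\P_{\mathcal{G}_{\textsf{cf}}(n,\mu)}(G\in A_n,\,G\text{ simple})}{\P_{\mathcal{G}_{\textsf{cf}}(n,\mu)}(G\text{ simple})} \;\le\; \frac{\P_{\mathcal{G}_{\textsf{cf}}(n,\mu)}(G\in A_n)}{\P_{\mathcal{G}_{\textsf{cf}}(n,\mu)}(G\text{ simple})},$$
so once the denominator is bounded below by a positive constant uniformly in $n$, the second assertion is immediate.

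For the first assertion, let $S_n$ and $M_n$ denote, respectively, the number of self-loops and the number of pairs of parallel edges in $G\sim \mathcal{G}_{\textsf{cf}}(n,\mu)$, and set $\ell_n=\sum_{i=1}^n d_i$. A direct count over pairings gives
$$\E[S_n\mid d_1,\dots,d_n] \;=\; \frac{1}{\ell_n-1}\sum_{i=1}^n \binom{d_i}{2},\qquad \E[M_n\mid d_1,\dots,d_n] \;=\; \frac{1+o(1)}{2\,\ell_n^{2}}\Bigl(\sum_{i=1}^n d_i(d_i-1)\Bigr)^{\!2},$$
with analogous explicit expressions for the joint factorial moments of $(S_n,M_n)$. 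The hypothesis $\E_{D\sim\mu}D^2<\infty$ together with the law of large numbers yields $\ell_n/n\to\E D$ and $n^{-1}\sum_i d_i(d_i-1)\to \E[D(D-1)]$ in probability, so all these moments converge to the corresponding moments of a pair of independent Poisson random variables with finite parameters
$$\mu_{\text{self}} \;=\; \frac{\E[D(D-1)]}{2\,\E D}, \qquad \mu_{\text{mult}} \;=\; \frac{1}{2}\,\mu_{\text{self}}^{\,2}.$$
By the method of moments, $(S_n,M_n)$ converges jointly in distribution to $(\mathrm{Poisson}(\mu_{\text{self}}),\mathrm{Poisson}(\mu_{\text{mult}}))$, hence
$$\P_{\mathcal{G}_{\textsf{cf}}(n,\mu)}(G\text{ simple}) \;=\; \P(S_n=0,\,M_n=0) \;\longrightarrow\; e^{-\mu_{\text{self}}-\mu_{\text{mult}}} \;\in\; (0,1).$$
Strict positivity for every fixed $n$ (any realizable degree sequence admits at least one simple pairing, chosen with positive probability) then provides the uniform lower bound in $(0,1)$.

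The main obstacle is controlling the factorial moments of $M_n$: this requires estimating sums of the form $\sum_{i\ne j} d_i(d_i-1)d_j(d_j-1)$ together with correction terms that arise from forcing four designated half-edges into two parallel edges, and showing that the random fluctuations of these sums vanish in the limit. Both the concentration and the interchange of limit and expectation rely precisely on the $\E D^2<\infty$ assumption; without it the tail contribution from atypically high-degree vertices would prevent the Poisson approximation from being uniform. Once this moment estimate is established, the rest of the argument is a standard method-of-moments application.
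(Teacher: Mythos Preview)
The paper does not supply its own proof of this lemma; it is quoted from the cited references \cite{j09, vanderhofstad17}. Your sketch follows exactly the standard argument found there: the conditioning identity reduces the contiguity statement to a uniform lower bound on $\P(G\text{ simple})$, and the latter is obtained by Poisson approximation for the joint law of self-loops and pairs of parallel edges via the method of moments. So there is nothing to compare against in the paper itself, and your approach is the expected one.

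Two small points. First, the Poisson parameter for multi-edges should be $\mu_{\text{mult}}=\mu_{\text{self}}^{\,2}$ (equivalently $\nu^2/4$ with $\nu=\E[D(D-1)]/\E D$), not $\tfrac12\mu_{\text{self}}^{\,2}$; this does not affect the conclusion since any finite positive limit suffices. Second, your sentence ``any realizable degree sequence admits at least one simple pairing'' is not literally true: an i.i.d.\ sample from $\mu$ with even sum need not be graphical, so positivity for each fixed small $n$ is not automatic. What the references actually establish (and what the paper needs) is that $\P(G\text{ simple})$ converges to a limit in $(0,1)$, hence is bounded below by a positive constant for all sufficiently large $n$; this is enough for the transfer in the second assertion.
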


\noindent Throughout the paper, we study the configuration model $\mathcal{G}_{\textsf{cf}}(n,\mu)$ instead of $\mathcal{G}(n,\mu)$, under the assumption $\E_{D\sim \mu} D^2<\infty$. 
Further, it is well known that $G\sim \mathcal{G}_{\textsf{cf}}(n,\mu)$ (and hence, $G\sim \mathcal{G}(n,\mu)$) \textsf{whp} contains the unique connected component of size linear in $n$, if and only if $\E_{D\sim \mu} D(D-2) > 0$ (for details, see \cite{mr95}). Hence, we always assume $\E_{D\sim \mu} D(D-2)> 0,$ which is the most interesting case for us. Otherwise, the graph decomposes into many small components and the contact process would not exhibit $e^{\Theta(n)}$-survival time for any $\lambda>0$.

\vspace{3mm}
\subsection{Local weak convergence} Given a sequence of random graphs $G_n$, let $N(v, r)$ be an induced subgraph of $G_n$ consisting of vertices of distance at most $r$ from $v$. Let $\P_n$ be the distribution of the neighborhood $N(v, r)$ where $v$ is a uniformly chosen vertex of $G_n$. We say that a random rooted tree $\mathcal T$ is the local weak limit of $G_n$ if for any finite $r$ and any rooted tree $T$ of depth at most $r$,
$$\lim_{n\to\infty} \P_n(N(v, r)=T) = \P(\mathcal T_r = T)$$
where $\mathcal T_r$ is the subtree of the first $r$ generations of $\mathcal T$.

We shall use the following known convergence of $G(n, \mu)$ to its corresponding Galton-Watson tree. Define the size-biased distribution
$\mu'$ to be
$$\mu'(k-1) = \frac{k\mu(k)}{\sum_{i=1}^{\infty} i\mu(i)}, \quad k=1, 2, \dots$$
Note that if $\mu = \text{Pois}(d)$ then $\mu' = \mu$.
Let $\mathcal{T}(\mu) \sim \gw(\mu, \mu')$ be the \textit{size-biased} Galton-Watson tree in which the number of children of the root has distribution $\mu$ and the number of children of an $i$-th generation vertex ($i\ge 1$) has distribution $\mu'$. We also stress that the Galton-Watson tree is supercritical if and only if $\E_{D'\sim{\mu'}}{D'}>1$,  equivalent to $\E_{D\sim \mu} D(D-2)> 0$ which we saw above.
\begin{lemma} \cite[Section 2.1]{DemboMontanari2010}
	Assume that $\mu$ has finite mean. Then the size-biased Galton-Watson tree $\mathcal T(\mu)$ is the local weak limit of $\mathcal G(n, \mu)$. The Galton-Watson tree with degree distribution $\text{Pois}(d)$ is the local weak limit of the Erd\H{o}s-R\'enyi random graph $\mathcal G_{n, d/n}$.
\end{lemma}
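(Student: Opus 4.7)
The plan is to verify the convergence for an arbitrary fixed depth $r$ and rooted tree $T$ of depth at most $r$ by a direct breadth-first search (BFS) exploration of $N(v,r)$ inside the configuration model $\mathcal G_{\textsf{cf}}(n,\mu)$, and then transfer to $\mathcal G(n,\mu)$ via the contiguity lemma stated above; the Erd\H{o}s-R\'enyi case is then either subsumed by this (taking $\mu=\text{Pois}(d)$, for which $\mu=\mu'$) or handled by a parallel edge-independence argument. Let $v$ be a uniformly chosen root of $G$. By construction its degree $d_v$ is a sample from $\mu$, matching the root degree in $\gw(\mu,\mu')$. I would then expose $N(v,r)$ one half-edge at a time, pairing each unpaired boundary half-edge with a uniformly chosen partner among the remaining unpaired half-edges.

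Three items must be verified as the exploration proceeds. First, the ``remaining degree'' $d_u-1$ of each newly discovered neighbor $u$ converges in distribution to $\mu'$: after $2m$ half-edges have been paired, the next partner is uniform among $\sum_i d_i - 2m - 1$ candidates, so the probability that it attaches to a fresh vertex of degree $k+1$ equals
$$\frac{(k+1)\,\bigl|\{i : d_i = k+1\}\bigr|}{\sum_i d_i - O(1)} \longrightarrow \frac{(k+1)\mu(k+1)}{\bar d} = \mu'(k),$$
where $\bar d = \E_{D\sim\mu}D$, by the law of large numbers for $\sum_i d_i/n$ (which uses only finite mean) together with a truncation at a large degree $K$. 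Second, with probability $1-o(1)$ no boundary half-edge is ever paired to a previously visited vertex within the first $r$ generations: the probability of such a back-edge at any step is $O(m/n)$, where $m$ is the number of half-edges used so far. Third, the number of vertices in $N(v,r)$ is stochastically dominated by $r$ generations of a Galton-Watson tree with offspring law $\mu'$, which has finite expectation under finite mean of $\mu$ and hence is $O_{\P}(1)$.

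Combining these three items, the BFS exploration is, up to $o(1)$ total variation, equivalent to sampling the first $r$ generations of $\mathcal T(\mu)\sim \gw(\mu,\mu')$, which yields $\P_n(N(v,r)\simeq T)\to \P(\mathcal T_r\simeq T)$. For the Erd\H{o}s-R\'enyi graph $\mathcal G_{n,d/n}$, the cleanest route is to invoke contiguity with $\mathcal G_{\textsf{cf}}(n,\text{Pois}(d))$ conditional on simplicity and then apply the above, using the characterizing property $\mu = \mu'$ for the Poisson law. Alternatively, argue directly from edge independence: conditional on a revealed set $S$ of explored vertices with $|S| = O_{\P}(1)$, the number of edges from a boundary vertex into $V\setminus S$ is $\text{Bin}(n-|S|-1,\,d/n) \xrightarrow{d} \text{Pois}(d)$, while the number of edges back into $S$ is dominated by $\text{Bin}(|S|,\,d/n) = o_{\P}(1)$.

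The main technical nuisance is the uniform control required in the first item: pointwise convergence of the size-biased formula is immediate, but one must truncate at some degree $K$ and control the tail of $\mu$. Finite mean is precisely what makes this work, since $\sum_{k>K} k\mu(k)\to 0$ as $K\to\infty$ simultaneously bounds the error in $\sum_i d_i/n \to \bar d$ and the probability that a discovered neighbor has degree exceeding $K$. This is why the lemma's hypothesis of a finite mean is both sufficient and natural.
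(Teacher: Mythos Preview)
The paper does not prove this lemma; it simply cites \cite{DemboMontanari2010}, Section 2.1, as a known result in the preliminaries. There is therefore no ``paper's own proof'' to compare against. Your sketch is the standard BFS-exploration argument and is correct in outline.

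One small slip worth noting: you write that $\mu'$ ``has finite expectation under finite mean of $\mu$,'' but $\E_{D'\sim\mu'}D' = \E D(D-1)/\E D$, which needs a finite \emph{second} moment of $\mu$, not just finite mean. This does not actually break your argument: for fixed depth $r$, the first $r$ generations of $\gw(\mu,\mu')$ are almost surely finite simply because each vertex has a.s.\ finitely many offspring, and that is all you need for the back-edge probability to be $o(1)$ and for the coupling to go through. So the conclusion stands, but the stated justification for the $O_\P(1)$ bound should be tightness (a.s.\ finiteness of a fixed-depth tree) rather than finite expectation.
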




\section{Extinction in Galton-Watson trees}\label{sec:sub gw}
Let $\xi$ be a random variable on $\mathbb{N}$ having an exponential tail, namely,  $\E \exp(c \xi)= M <\infty$ for some constants $c, M >0$. Throughout this section we assume $\E \xi >1$, which makes $\mathcal{T}\sim \gw(\xi)$, the Galton-Watson tree with offspring distribution $\xi$, survive forever with positive probability.  We also denote the depth-$L$ Galton-Watson tree by $\mathcal{T}_L \sim \gw(\xi)_L$, and   its root denoted by $\rho$.

The goal of this section is to establish Theorem \ref{thm:gw main}. To this end, we prove the following in the next two sections:
\begin{itemize}
	\item We first show that for small enough $\lambda$, the expected survival time of $\cp^\lambda (\mathcal{T}_L; \one_\rho)$ is bounded by a constant uniform in $L$.
	
	\item Then, we prove that for small enough $\lambda$, the probability that the infection in $\cp^\lambda (\mathcal{T}_L;\one_\rho)$ goes deeper than $h$ decays exponentially in $h$.
\end{itemize}
At the end, we will combine the two to see that the death-survival threshold $\lambda_1$ of the infinite Galton-Watson tree is strictly positive. Moreover, both properties will be essential in Section \ref{sec:sub rg}.

\subsection{Expected survival time in finite trees}\label{sec:cp on gw}

In this section, we prove the following theorem:

\begin{theorem}\label{thm:cp on gw}
	Let $L$ be an arbitrary integer and $\xi, \mathcal{T}_L$ be defined as above. Let $R_L$ be the first time when $\cp^\lambda(\mathcal{T}_L;\one_\rho)$ reaches state $\zero$. Then there exist constants $C, \lambda_0>0$ depending only on $\xi$ such that for any $\lambda \leq \lambda_0$ and $L$, we have $\E R_L \leq C$.
\end{theorem}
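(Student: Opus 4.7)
The approach is to work with the \emph{modified process} $\tcp^\lambda(\mathcal{T}_L)$ sketched in \S\ref{subsec:maintec}: attach a ghost vertex above $\rho$ that is always infected, and suppress the recovery clock at $\rho$ at any instant at which some descendant of $\rho$ is infected. By the graphical representation (Lemma~\ref{lem:graph rep2}), $\tcp^\lambda$ coordinatewise dominates the original $\cp^\lambda(\mathcal{T}_L;\one_\rho)$, so writing $T_L$ for the first time $\tcp^\lambda$ started at $\one_\rho$ returns to $\zero$, we have $R_L\le T_L$ almost surely and it suffices to bound $\E T_L$. The modified chain is an irreducible finite-state CTMC; let $\pi_L$ be its stationary probability of $\zero$ (equivalently, of $\rho$ being healthy, since the suppressed-recovery rule forces all descendants to be healthy whenever $\rho$ is), and let $\nu_L:=\P[\text{all descendants healthy}\mid \rho\text{ infected}]$ under stationarity. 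Balance at the indicator $\{\rho\text{ infected}\}$ yields $\lambda\pi_L=(1-\pi_L)\nu_L$, while the standard renewal identity $\pi_L=(1/\lambda)/(1/\lambda+\E T_L)$ gives
\begin{equation*}
\E R_L \;\le\; \E T_L \;=\; \frac{1-\pi_L}{\lambda\pi_L} \;=\; \frac{1}{\nu_L}.
\end{equation*}

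It therefore suffices to show that $\E_{\mathcal{T}_L\sim\gw(\xi)_L}[1/\nu_L]$ is bounded uniformly in $L$ once $\lambda$ is small. To this end I would induct on the depth $L$. Conditional on $\rho$ being infected, the processes on the subtrees $\mathcal{T}^{(1)},\ldots,\mathcal{T}^{(\xi)}$ rooted at the children of $\rho$ decouple into independent copies of the analogous contact-plus-ghost dynamic on a depth-$(L-1)$ GW tree (after passing to the strengthened modification discussed below). Applying the same balance argument one level down and comparing the two stationary probabilities yields a schematic recursion
\begin{equation*}
\frac{1}{\nu_L} \;\lesssim\; \prod_{i=1}^{\xi}\frac{1}{\nu_{L-1}(\mathcal{T}^{(i)})}\,\bigl(1+O(\lambda)\bigr),
\end{equation*}
and taking expectations over the tree, using that the $\xi$ subtrees are iid and independent of $\xi$,
\begin{equation*}
\E[1/\nu_L] \;\lesssim\; (1+O(\lambda))\,\E_\xi\!\Bigl[\bigl(\E[1/\nu_{L-1}]\bigr)^{\xi}\Bigr].
\end{equation*}
This is precisely where the hypothesis $\E e^{c\xi}<\infty$ is used: if $\E[1/\nu_{L-1}]\le 1+\varepsilon$ with $\log(1+\varepsilon)<c$, the outer expectation is finite and equals $1+O(\varepsilon)$ for $\varepsilon$ small. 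Consequently, for $\lambda<\lambda_0$ small (depending only on $\xi$), the map $\varepsilon\mapsto (1+O(\lambda))\E_\xi[(1+\varepsilon)^{\xi}]-1$ has an attracting fixed point of order $O(\lambda)$ near $0$, and induction from the trivial base case $L=0$ (where $\nu_0\equiv 1$) gives $\E[1/\nu_L]\le 1+C\lambda$ uniformly in $L$. Substituting back yields $\E R_L\le C'$.

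The main obstacle I anticipate is the conditional-independence step used in the recursion: in the true stationary distribution of $\tcp^\lambda(\mathcal{T}_L)$ the subtrees are not literally independent, because they interact through the suppressed-recovery condition at $\rho$. A natural workaround, already foreshadowed above, is to strengthen the modification so that the recovery of \emph{every} vertex $v$ is suppressed whenever any descendant of $v$ is infected. This stronger modification still dominates $\cp^\lambda$ (so the reduction $R_L\le T_L$ is preserved), and now the sub-dynamics inside each child's subtree is itself an instance of the same strengthened process on a depth-$(L-1)$ tree, making the subtree recursion exact. The exponential-tail hypothesis is then exactly what is needed to close the recursion uniformly in $L$, since the offspring moment generating function $\E[(1+\varepsilon)^\xi]$ is finite only for $\varepsilon<e^c-1$.
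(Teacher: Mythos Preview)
Your overall strategy matches the paper's Lemma~\ref{lem:cp on gw} closely: the ghost-plus-suppressed-recovery modification, the domination $R_L\le T_L$, the renewal identity $\E T_L=1/\nu_L$, and the product structure of the subtree dynamics once $\rho$ is pinned infected are exactly the paper's ingredients. Your flagged obstacle (the subtrees are not independent in the stationary law of the modified chain) is real, and your strengthened modification is a valid fix. The paper takes a slightly different route: it modifies only at $\rho$ and observes that during an excursion the subtree dynamics are \emph{unmodified} root-added processes $\cp^\lambda_\rho(\mathcal T_{v_i}^+)$; it then recurses between the modified quantity $\widetilde S_L$ and the unmodified $S_{L-1}$, closing the loop via $S_L\le\widetilde S_L$. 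Both routes work.

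There is, however, a genuine gap in your recursion formula. The product structure yields, in either version, the identity
\[
\E[\widetilde S_L\mid\mathcal T_L]\;=\;\prod_{i=1}^{D}\bigl(1+\lambda\,\E[S_{L-1}\mid\mathcal T_{v_i}]\bigr)
\]
(equation~\eqref{eq:cp tree rec} in the paper), i.e.\ in your notation $1/\nu_L=\prod_i\bigl(1+\lambda/\nu_{L-1}(\mathcal T^{(i)})\bigr)$, \emph{not} $(1+O(\lambda))\prod_i\bigl(1/\nu_{L-1}(\mathcal T^{(i)})\bigr)$. The difference is decisive. Your stated map $\varepsilon\mapsto(1+O(\lambda))\,\E_\xi[(1+\varepsilon)^\xi]-1$ has derivative $(1+O(\lambda))\E\xi>1$ at $\varepsilon=0$, so any nearby fixed point is repelling; iterating from $\varepsilon_0=0$ gives $\varepsilon_L\gtrsim C\lambda\sum_{k<L}(\E\xi)^k\to\infty$, and the induction does not close. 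By contrast, the correct map $a\mapsto\E_\xi[(1+\lambda a)^\xi]\le\E_\xi[e^{\lambda a\xi}]$ has derivative $\approx\lambda\E\xi<1$ near $a=1$, and the induction closes at $a_L\le e$ for $\lambda$ small via Jensen and the exponential-tail hypothesis, exactly as in the paper's last paragraph of the proof of Lemma~\ref{lem:cp on gw}.
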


Let $D\sim\xi$ denote the degree of the root $\rho$, and $v_1,\dots,v_D$ be the children of $\rho$. Further, let $\mathcal{T}_{v_i}$ denote the subtree of $\mathcal{T}$ rooted at $v_i$. To establish Theorem \ref{thm:cp on gw}, our attempt is to study the effect of joining the subtrees $\mathcal{T}_{v_i}$ together at $\rho$, and hence expressing $R_L$ in terms of $R_{L-1}$. The main difficulty of this approach comes from the fact that the contact process on $\mathcal{T}_L$ does not behave independently on each subtree $\mathcal{T}_{v_i}$. To overcome this obstacle, we study the contact process in a slightly different setting, by adding a parent $\rho^+$ above the root $\rho$ which is  infected
permanently.

\begin{definition}[Root-added contact process]\label{def:modified cp}
	Let $T$ be a finite tree rooted at $\rho$. Let $T^+$ be the tree that has a parent vertex $\rho^+$ of $\rho$ which is connected only with $\rho$.  The \textit{root-added contact process} on $T$ is the continuous-time Markov chain on the state space $\{0,1\}^T$, defined as the contact process on $T^+$ with  $\rho^+$ set to be infected permanently (hence we exclude $\rho^+$ from the state space). That is, $\rho^+$ is infected initially, and it does not have a recovery clock attached to itself. Let $\cp^\lambda_{\rho^+}(T^+;x_0)$ denote the root-added contact process on $T$ with initial condition $x_0 \in \{0,1 \}^T $.
\end{definition}

By adding a permanently infected parent, we can take advantage of independence between different subtrees as well as the stationary distribution of the process, as briefly discussed in Section  \ref{subsec:maintec}. In the following lemma, we formally introduce the ``modified process" explained in Section  \ref{subsec:maintec} and construct a quantitative recursion argument in terms of the tree depth.






\begin{lemma}\label{lem:cp on gw}
	Let $L$ be an arbitrary integer and $\xi, \mathcal{T}_L$ be defined as above. Define $S_L$ to be the first time when $\cp^\lambda_{\rho^+}(\mathcal{T}_L^+;\one_\rho)$ reaches state $\zero$. Then there exists a constant $\lambda_0 >0$ depending only on $\xi$ such that for any $\lambda \leq \lambda_0$ and $L$, $\E S_L \leq e$.
\end{lemma}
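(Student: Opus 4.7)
The plan is to introduce an auxiliary \emph{modified} process which artificially prolongs infection at $\rho$, exploit the resulting independence of the subtrees beneath $\rho$ to set up a recursion over the tree depth $L$, and close the induction using the exponential tail of $\xi$.

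First, define $(\widetilde X_t)$ on $\mathcal{T}_L^+$ to be the root-added contact process modified so that every recovery clock-tick at $\rho$ is \emph{ignored} whenever at least one descendant of $\rho$ is infected at that instant. By Lemma~\ref{lem:graph rep2}, ignoring recoveries can only enlarge the infected set, so in the graphical coupling $S_L \le \widetilde S_L$ almost surely; it therefore suffices to show $\E \widetilde S_L \le e$. A structural consequence of the modification is that, in $(\widetilde X_t)$, $\rho$ is healthy if and only if $\widetilde X_t = \zero$; hence the root-active interval $[0,\widetilde S_L)$ is a single uninterrupted window throughout which $\rho$ stays infected. During this window, the $D$ subtrees $\mathcal{T}_{v_1},\ldots,\mathcal{T}_{v_D}$ beneath $\rho$ evolve as \emph{independent} root-added contact processes on $\mathcal{T}_{v_i}^+$ (with $\rho$ itself playing the role of the permanently-infected parent), each started from the all-healthy state $\zero$.

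The key recursion now follows from viewing $\widetilde S_L$ as the first ring of $\rho$'s rate-$1$ Poisson clock that lands on a \emph{good} moment at which every $\mathcal{T}_{v_i}$ is simultaneously in $\zero$. Writing $\beta_{L-1}(T) := 1/\bigl(1 + \lambda\, \E[S_{L-1}(T)\mid T]\bigr)$ for the stationary probability of $\zero$ in the root-added process on $T^+$ (a consequence of the standard regeneration identity at $\zero$), the independent-subtree structure together with a stochastic-monotonicity argument---starting the subtrees from the all-$\zero$ state is at least as favourable for a first good ring as starting in stationarity---yields
\[
\E\bigl[\widetilde S_L \,\big|\, \mathcal{T}_L\bigr] \;\le\; \prod_{i=1}^{D} \frac{1}{\beta_{L-1}(\mathcal{T}_{v_i})} \;=\; \prod_{i=1}^{D}\bigl(1 + \lambda\,\E[S_{L-1}(\mathcal{T}_{v_i})\mid \mathcal{T}_{v_i}]\bigr).
\]
Setting $\tau_L := \E S_L$ (the expectation now taken over both the random tree and the process), the base case $L = 0$ is immediate: $\tau_0 = 1 \le e$ since $\rho$ simply recovers at rate $1$ with no descendants to slow it down. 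Assuming inductively $\tau_{L-1} \le e$, conditioning on $D \sim \xi$ and using that the $D$ child subtrees are i.i.d.\ copies of $\gw(\xi)_{L-1}$, taking expectations in the displayed recursion yields $\tau_L \le \E_{D \sim \xi}(1 + \lambda\, \tau_{L-1})^D \le \E_{D \sim \xi}(1 + \lambda e)^D$. Since $\E e^{c\xi} < \infty$ for some $c > 0$ and the right-hand side tends to $1$ as $\lambda \to 0$ by dominated convergence, one can choose $\lambda_0 = \lambda_0(\xi) > 0$ so that $\E_{D \sim \xi}(1 + \lambda e)^D \le e$ for all $\lambda \le \lambda_0$, closing the induction.

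The most delicate ingredient is the key recursive inequality for $\E[\widetilde S_L \mid \mathcal{T}_L]$. While the decoupling of the subtrees during the root-active window and the renewal interpretation of $\widetilde S_L$ as the first good ring of a rate-$1$ Poisson clock are both transparent, rigorously upper-bounding the expected first good-ring time by the inverse of the stationary good-time fraction $\prod_i \beta_{L-1}(\mathcal{T}_{v_i})$ requires either a compensator/optional-stopping argument on the counting process of good rings, or a monotone coupling that compares the subtrees started from $\zero$ with their stationary versions. The rest of the proof---the reduction to the modified process via Lemma~\ref{lem:graph rep2}, the identification of subtree independence during the root-active window, and the exponential-tail averaging---is essentially routine once this estimate is in place.
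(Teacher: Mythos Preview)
Your approach coincides with the paper's: the same modified process $(\widetilde X_t)$, the same recursion $\E[\widetilde{S}_L\mid\mathcal{T}_L] \le \prod_{i=1}^{D} \bigl(1+\lambda\, \E[S_{L-1}\mid\mathcal{T}_{v_i}]\bigr)$, and the same induction closed via the exponential tail of $\xi$. The one difference is how the recursion is established. You cast $\widetilde{S}_L$ as the first ``good'' ring of $\rho$'s recovery clock and reach for a monotonicity or compensator argument; the paper instead decomposes the trajectory of $(\widetilde X_t)$ explicitly as a geometric number of excursions of the product chain $\otimes_{i}\cp^\lambda_\rho(\mathcal T_{v_i}^+)$ away from $\zero$ plus a final wait, obtaining $\E[\widetilde S_L\mid\mathcal T_L]=1+\lambda D\,\E[\widetilde S^\otimes\mid\mathcal T_L]=\pi^{(D)}(\zero)^{-1}$ as an \emph{equality} via the renewal identity for the product chain at $\zero$. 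This sidesteps the step you yourself flag as delicate; in particular, the monotone comparison to a stationary start does not by itself give $\E[\widetilde S_L']\le 1/\pi^{(D)}(\zero)$, since for a stationary point process of intensity $p$ the forward recurrence time need not have mean at most $1/p$. (A direct renewal argument---the good rings started from $Y_0=\zero$ are themselves regeneration times, so their rate is both $1/\E\widetilde S_L$ and $\pi^{(D)}(\zero)$---does work and yields the same equality.)
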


\begin{proof}
	We build an inductive argument in terms of $L$, by considering the  modified contact process $(\widetilde{X}_t) \sim\tcp^\lambda_{\rho^+;\rho} (\mathcal{T}_L^+ ; \one_\rho)$ defined as follows:
	\begin{itemize}
		\item $(\widetilde{X}_t)$ is coupled with $(X_t) \sim \cp^\lambda_{\rho^+} (\mathcal{T}_L^+; \one_\rho)$ in the sense that they share the same locations of recovery and infection clocks. In particular, $\rho^+$ is permanently infected in $(\widetilde{X}_t)$.
		
		\item In $(\widetilde{X}_t)$, the recovery at $\rho$ at time $s$ is valid if and only if $\widetilde{X}_s =  \one_\rho$. Otherwise, we ignore the recovery at $\rho$. In other words, when there exists an infected vertex other than $\rho$ and $\rho^+$, the recovery at $\rho$ is neglected.
	\end{itemize}
	Let $\widetilde{S}_L$ be the first time when $\widetilde{X}_t$ reaches the all-healthy state $\zero$. Then Lemma \ref{lem:graph rep2} tells us that $S_L \leq \widetilde{S}_L$.
	Assume that we started running $(\widetilde{X}_t)$ from $t=0$. Then there are two possibilities for the transition to the second state from the initial state $\one_\rho$:
	
	\begin{enumerate}
		\item [\textbf{A}.]  $\rho$ is healed;
		
		\item [\textbf{B}.] $\rho$ infects one of its children, say, $v_i$.
	\end{enumerate}
	When \textbf{A} happens, then $\widetilde{S}_L$ is just the time elapsed until encountering \textbf{A}. If we let $D$ denote the number of children of $\rho$, then probability of the event $\textbf{A}$ is $\frac{1}{1+\lambda D}$ and also
	\begin{equation*}
	\E [\widetilde{S}_L | {\textnormal{\textbf{A}}} ] = \frac{1}{1+\lambda D}.
	\end{equation*}
	On the other hand, when \textbf{B} happens, then the recoveries at $\rho$ are neglected until all of its descendants are healthy. Therefore, the infection and recovery occurring inside the subtrees $\mathcal{T}_{v_i} \cup \{\rho\}$ become independent of each other until all of them become completely healthy at the same time. Hence, after the occurrence of \textbf{B}, where we have $\one_{\{\rho,v_i \}}$ as its new initial state, $(\widetilde{X}_t)$ can be viewed as the product chain $\left(X^\otimes_t\right)$ of root-added contact processes defined as follows.
	\begin{equation*}
	\left(X^\otimes_t \right) ~\sim~ \cp^\otimes_\rho (\mathcal{T}_L ; \one_{v_i} )
	:= \left(\otimes_{\substack{ j=1\\j\neq i}}^D \cp^\lambda_{\rho} (\mathcal{T}^+_{v_j}; \zero) \right) \otimes \cp^\lambda_{\rho}(\mathcal{T}^+_{v_i}; \one_{v_i}).
	\end{equation*}
	(Here for each $\mathcal{T}_{v_j}$, we view $\rho$ as its permanently infected parent of the root $v_i$.) Note that this perspective is valid until $\widetilde{X}_t$ returns back to $\one_\rho$.
	
	Let $\widetilde{S}^\otimes_i$ denote the time that the above product chain on $\cup_{j=1}^D \mathcal{T}_{v_j} $ started from the state $\one_{v_i}$ reaches the all-healthy state $\zero$.
	At time $s=\widetilde{S}^\otimes_i$, $\widetilde{X}_s$ is again in the state $\one_\rho$, hence it again meets with either \textbf{A} or \textbf{B} in the next step. Note that in this situation the expected waiting time to encounter either event is $\frac{1}{1+\lambda D}$. Also, define $\widetilde{S}^\otimes$ to be the average of $\widetilde{S}^\otimes_i$ over all $i$, recalling that when event \textbf{B} occurs, each child $v_i$ is infected with equal probability.  Then, if we continue this procedure until $\widetilde{X}_t$ reaches $\zero$, we get
	\begin{equation}\label{eq:geometric trial cp}
	\begin{split}
	\E [\widetilde{S}_L \;  |\; \mathcal{T}_L]
	=
	\sum_{k=0}^\infty \left(\frac{\lambda D}{1+\lambda D} \right)^k \frac{1}{1+\lambda D} \times\left[(k+1)\frac{1}{1+\lambda D}  + k\E \left[\left.\widetilde{S}^\otimes\; \right|\; \{ \mathcal{T}_{v_i} : i\in [D] \}\right] \right].
	\end{split}
	\end{equation}
	Simplifying the sum then gives 
	$$\E [\widetilde{S}_L \;  |\; \mathcal{T}_L]= 1+\lambda D \E \left[\left.\widetilde{S}^\otimes\; \right|\; \{ \mathcal{T}_{v_i} : i\in [D] \}\right], $$
	which implies
	\begin{equation}\label{eq:cp tree}
	\E [\widetilde{S}_L \,  |\, D] = 1+\lambda D\, \E[\widetilde{S}^\otimes|D] .
	\end{equation}
	The next step of the proof is to estimate $\E[\widetilde{S}^\otimes|D]$ by relating it to the stationary distributions of the root-added contact processes. Let $\pi^{(D)}$ be the stationary distribution of the product chain $\cp^\otimes_\rho(\mathcal{T}_L)$ (when defining $\pi^{(D)}$, note that the initial state of the process is irrelevant). We also let $\pi_i$ be the stationary distribution of $\cp^\lambda_{\rho} (\mathcal{T}_i^+)$. Then we have $$\pi^{(D)} = \otimes_{i=1}^D \pi_i.$$
	For any state $x$ on $\mathcal{T}_L \setminus\{\rho\}$, $\pi^{(D)}(x)$ is proportional to the expected time that the chain $(X^\otimes_t) \sim \cp^\otimes_\rho(\mathcal{T}_L)$ stays at state $x$. Moreover, the expected time for the chain to stay at $\zero$ is $(\lambda D)^{-1}$, and after escaping from $\zero$, it spends time $\E[\widetilde{S}^\otimes| \mathcal{T}_L]$ in expectation before returning back to $\zero$. Therefore,
	\begin{equation}\label{eq:stationary and running time 1}
	\pi^{(D)} (\zero) = \frac{(\lambda D)^{-1}}{(\lambda D)^{-1} + \E [\widetilde{S}^\otimes \,| \, \mathcal{T}_L] }
	= \frac{1}{1+ \lambda D \, \E [\widetilde{S}^\otimes \,| \, \mathcal{T}_L] }.
	\end{equation}
	Similarly, we have
	\begin{equation}\label{eq:stationary and running time 2}
	\pi_i (\zero) = \frac{1}{1+ \lambda \E [{S}_{L-1} \, | \, \mathcal{T}_{v_i}]},
	\end{equation}
	where ${S}_{L-1}$ is the first time when $(X_t^i) \sim \cp^\lambda_{\rho} (\mathcal{T}_{v_i}^+; \one_{v_i})$ reaches state $\zero$. Here, note that $S_{L-1}$ matches with the definition from the statement of this lemma since $\mathcal{T}_{v_i} \sim \gw(\xi)_{L-1}$. Therefore, we obtain that
	\begin{equation}\label{eq:cp tree rec}
	1+\lambda D \,  \E [\widetilde{S}^\otimes \,| \, \mathcal{T}_L] =
	\prod_{i=1}^D (1+\lambda \E[{S}_{L-1} | \mathcal{T}_{v_i}]).
	\end{equation}
	Since  $\{\mathcal{T}_{v_i}\}_{i\geq1}$ are i.i.d. $ \gw(\xi)_{L-1}$ for all $i$, integrating (\ref{eq:cp tree rec}) over the randomness of $\{\mathcal{T}_{v_i}: i\in[D] \}$ tells us that
	\begin{equation*}
	1+\lambda D \E[ \widetilde{S}^\otimes |D] = (1+\lambda \E [{S}_{L-1}])^D \leq \exp\{(\lambda \E [{S}_{L-1}]) D\}.
	\end{equation*}
	Combining this with (\ref{eq:cp tree}), we get
	\begin{equation}\label{eq:cp tree fin}
	\E [ \widetilde {S}_L|D] \leq \exp\{(\lambda \E [{S}_{L-1}]) D\}.
	\end{equation}
	
	In the last step of the proof, we complete the inductive argument using the fact that $D \sim \xi$ has an exponential tail. Let us set $c,M>0$ to be the constants satisfying $\E \exp({c D}) = M$. When $L=0$, we trivially have that $\E {S}_0= 1$. Define $K$ and $\lambda_0$ as
	\begin{equation*}
	K = e\cdot \max\{\log M,\, 1 \}, \quad \lambda_0 = \frac{c}{K}.
	\end{equation*}
	Suppose that $\E [S_{L-1}] \leq e$. Then for any $\lambda\leq\lambda_0$, we have
	\begin{equation*}
	\begin{split}
	\E S_L \leq \E \widetilde{S}_L \leq \E_{D\sim \xi} \left[\exp( \lambda \E[{S}_{L-1}] D) \right] &=
	\E_{D\sim \xi}\left[\exp\left(\frac{ \lambda \E[{S}_{L-1}]}{c}\cdot cD\right) \right]
	\\ &\leq \exp \left\{\log M \frac{\lambda \E[{S}_{L-1}]}{c} \right\} \leq e,
	\end{split}
	\end{equation*}
	where we used Jensen's inequality to deduce the first inequality in the second line. Finally, an elementary induction argument implies the desired result.
\end{proof}

\begin{proof}[Proof of Theorem \ref{thm:cp on gw}] For $R_L, S_L$ defined as in the statement of Theorem \ref{thm:cp on gw} and Lemma \ref{lem:cp on gw}, respectively, we have $\E R_L \leq \E S_L$ due to Lemma \ref{lem:graph rep2}. Therefore, setting $\lambda_0$ as in the proof of Lemma \ref{lem:cp on gw} and $C=e$, we obtain $\E R_L\leq C$ for all $\lambda\leq \lambda_0$ and $L$. 
\end{proof}

\subsection{Exponential decay of the infection depth}\label{sec:decay of depth}

In this section, we show that the maximal depth that the infection can reach before dying out decays exponentially.

For any integer $L$, let $\mathcal{T}_L \sim \gw(\xi)_L$ and $\mathcal{T}_L^+$ be the graph obtained by adding a new parent root $\rho^+ $ above $\rho$ in $\mathcal{T}_L$ as before. For each state $x \in \{0,1\}^{\mathcal{T}_L}$, define the depth of $x$ in $\mathcal{T}_L^+$ to be
\begin{equation*}
r(x) =r(x;\mathcal{T}_L^+) = \max \{d(\rho^+,v): x(v)=1  \}.
\end{equation*}
For $x=\zero$, we set $r(\zero)=0$. Consider the root-added process $(X_t) \sim \cp^\lambda_{\rho^+} (\mathcal{T}_L^+; \one_{\rho } )$ (Definition \ref{def:modified cp}), and let $S_L$ be the first time then the process reaches the state $\zero$. Let $H=\max \{r(X_t) : t\in [0, S_L] \}$ be the maximal depth that the process reaches during an excursion from $\zero$. Our goal in this section is to establish the following theorem and conclude the proof of  Theorem \ref{thm:gw main}.

\begin{theorem}\label{thm:decay of depth}
	Let $L>0$ be any integer and let $\mathcal{T}_L$, $S_L$ and $H$  be as above.  There exist constants $K, \lambda_0>0$ depending only on $\xi$ such that for all $\lambda \leq \lambda_0$, $h>0$ and $m>0$, we have
	\begin{equation*}
	\P (H>h \,|\, \mathcal{T}_L) \leq 2m (K\lambda)^h,
	\end{equation*}
	with probability at least $1-m^{-1}$ over the choice of $\mathcal{T}_L$.
\end{theorem}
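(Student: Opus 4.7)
The plan is to implement the \textit{delayed process} mentioned in \S\ref{subsec:maintec}. Let $\alpha>1$ be a parameter to be chosen later in terms of $\lambda$. Define the delayed modified process by taking the chain $\widetilde X_t$ from the proof of Lemma~\ref{lem:cp on gw} and dividing every transition rate at state $x$ by $\alpha^{r(x)}$; the embedded jump chain is unchanged, only the mean holding time at $x$ is multiplied by $\alpha^{r(x)}$. Let $\widetilde S_L^{\mathrm{del}}$ denote its first return to $\zero$ and set $b_L:=\E_{\mathcal T_L}\E[\widetilde S_L^{\mathrm{del}}\mid\mathcal T_L]$. The cycle decomposition underlying (\ref{eq:geometric trial cp}) is insensitive to the speedups (only the jump chain enters the geometric counting), so summing the series gives $\E[\widetilde S_L^{\mathrm{del}}\mid D,\{\mathcal T_{v_j}\}]=\alpha+\lambda D\,\E\widetilde S^{\otimes,\mathrm{del}}$, where $\widetilde S^{\otimes,\mathrm{del}}$ is the return time of the delayed sub-excursion product chain with $\rho$ permanently infected. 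The obstacle is that the product-chain delay $\alpha^{r(x)}=\alpha\cdot\alpha^{\max_j r'_j(x_j)}$ (with $r'_j(x_j)$ the depth of infection in subtree $j$ measured from $\rho$) does not factor across subtrees. The key observation is that $\alpha^{\max_j r'_j}\le\prod_j\alpha^{r'_j}$, so replacing the delay by the \textit{product delay} $\gamma(x):=\alpha\prod_j\alpha^{r'_j(x_j)}\ge\alpha^{r(x)}$ only slows the chain further, while making its stationary law factor as $\prod_j\widetilde\pi_j$, where $\widetilde\pi_j$ is the stationary distribution of the delayed root-added process on $\mathcal T_{v_j}^+$. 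Applying (\ref{eq:stationary and running time 1})--(\ref{eq:stationary and running time 2}) to this factorizing chain and to each factor yields
\begin{equation*}
\E[\widetilde S_L^{\mathrm{del}}\mid D,\{\mathcal T_{v_j}\}]\leq\alpha\prod_j\bigl(1+\lambda\,\E[\widetilde S_{L-1}^{\mathrm{del}}\mid\mathcal T_{v_j}]\bigr).
\end{equation*}

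Averaging over i.i.d.\ $\mathcal T_{v_j}\sim\gw(\xi)_{L-1}$ and invoking the exponential tail $\E e^{cD}\le M$ exactly as at the end of the proof of Lemma~\ref{lem:cp on gw} produces the recursion $b_L\le\alpha M^{\lambda b_{L-1}/c}$ (valid as long as $\lambda b_{L-1}\le c$). The base case $b_0=\alpha$ and a straightforward induction then give $b_L\le e\alpha$ for all $L$ provided $\lambda\le c/(e\alpha\max(\log M,1))$. Now let $T_{\ge h+1}$ denote the total time during one excursion that the original (not delayed) modified process occupies states of depth $\ge h+1$. Since the two chains share the same jump chain and the delayed one stretches the mean holding time at each depth-$d$ state by a factor $\alpha^d\ge\alpha^{h+1}$ once $d\ge h+1$, taking expectations state-by-state gives $\alpha^{h+1}\E T_{\ge h+1}\le\E\widetilde S_L^{\mathrm{del}}=b_L\le e\alpha$. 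Conversely, the graphical representation shows that if the infection first reaches depth $\ge h+1$ by infecting some vertex $V\neq\rho$, then $V$ remains infected for an independent $\mathrm{Exp}(1)$ duration thereafter, so $\E T_{\ge h+1}\ge\P(H\ge h+1)$.

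Combining the two displays and choosing $\alpha=c/(e\lambda\max(\log M,1))$ (which is $>1$ once $\lambda$ falls below some $\lambda_0(\xi)$) yields $\P(H>h)\le e/\alpha^h\le(K\lambda)^h$ for all $h\ge1$, with $K=e^2\max(\log M,1)/c$. Markov's inequality applied to the non-negative random variable $\P(H>h\mid\mathcal T_L)$ over the law of $\mathcal T_L$ then delivers the desired conditional bound $\P(H>h\mid\mathcal T_L)\le 2m(K\lambda)^h$ with probability at least $1-1/m$. The technical heart of the argument is the factorization step in the first paragraph: the natural state-dependent delay $\alpha^{r(x)}$ does not decouple across the children of $\rho$, so the recursion of Lemma~\ref{lem:cp on gw} cannot be applied verbatim. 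Passing to the larger, factorizing delay $\gamma$ preserves the product structure of the stationary distribution while only weakening the upper bound, which is precisely what is needed to close the induction for $b_L$.
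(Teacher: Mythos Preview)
Your argument is correct and mirrors the paper's proof closely: Proposition~\ref{prop:dcp on gw} introduces the same delayed process (with $\theta=1/\alpha$), uses the same inequality $\max_j r_j\le\sum_j r_j$ to pass to a factorizing stationary measure (exactly your ``product delay'' trick), closes the identical recursion $\E S_L^\theta\le\theta^{-1}\E_D(1+\lambda\E S_{L-1}^\theta)^D$, and finishes with the same $\mathrm{Exp}(1)$ holding-time lower bound for the time spent at depth~$\ge h$. The one noteworthy difference is the final Markov step: the paper applies it to the tree-measurable quantity $\nu^\theta_{\mathcal T_L}(\zero)^{-1}=1+\lambda\,\E[S_L^\theta\mid\mathcal T_L]$, which does not depend on $h$, so the resulting good event for $\mathcal T_L$ gives the bound \emph{simultaneously for all $h$}---this uniformity is exploited later in Section~\ref{sec:sub rg}---whereas your application of Markov directly to $\P(H>h\mid\mathcal T_L)$ yields an $h$-dependent good event, sufficient for the theorem as literally stated but slightly weaker.
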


In order to control the deepest depth of infection, we introduce the \textit{delayed} contact process.

\begin{definition}[Delayed contact process]\label{def:dcp}
	Let $\mathcal{S}^+$ be a graph rooted at $\rho^+$ and $\mathcal{S}=\mathcal{S}^+\setminus \{\rho^+\}$. For any two states $x,y \in \{0,1\}^{\mathcal{S}}$, let $Q_{xy}$ be the rate of transition from $x$ to $y$ in the contact process $\cp^\lambda_{\rho^+} (\mathcal{S}^+)$. For a fixed constant $\theta \in (0,1)$, the \textit{delayed contact process}, denoted by $\ddp^{\lambda, \theta}_{\rho^+} (\mathcal{S}^+; x_0 )$, is the continuous-time Markov chain on $\{0,1 \}^{\mathcal{S}}$ with initial state $x_0$ and transition rate
	\begin{equation*}
	Q^{(\theta)}_{xy} = \theta^{r(x;\mathcal{S}^+)}Q_{xy}=\theta^{r(x)}Q_{xy}.
	\end{equation*}
\end{definition}

\noindent According to the definition, in the delayed contact process, we spend exponentially longer time in the states with deeper depths. Let $\pi_{\mathcal{S}}, \nu^\theta_{\mathcal{S}}$ denote the stationary distributions of $\cp^\lambda_{\rho^+} (\mathcal{S}^+)$ and $\ddp^{\lambda,\theta}_{\rho^+} (\mathcal{S}^+)$, respectively. Then,
\begin{equation}\label{eq:stationary distr basic}
\nu^\theta_{\mathcal{S}} (x) = \frac{\theta^{-r(x)}\pi_{\mathcal{S}}(x) }{\sum_y \theta^{-r(y)}\pi_{\mathcal{S}}(y)  },
\end{equation}
where the summation is over all possible states $y\in \{0,1 \}^{\mathcal{S}}$.

Suppose we have a lower bound on $\nu^\theta_{\mathcal{S}}(\zero) $. Then this implies an upper bound on $\pi_{\mathcal{S}}(x)$, by
\begin{equation*}
\pi_{\mathcal{S}}(x) =\frac{\theta^{r(x)}\nu^\theta_{\mathcal{S}}(x)}{\sum_y \theta^{r(y)} \nu^\theta_{\mathcal{S}}(y)} \leq
\frac{\theta^{r(x)} \nu^\theta_{\mathcal{S}}(x)}{\nu^\theta_{\mathcal{S}}(\zero)},
\end{equation*}
which intuitively infers that it is (exponentially) unlikely to see states of having very deep infections until the process comes back to $\zero$. Based on this intuition, we establish the following proposition.

\begin{proposition}\label{prop:dcp on gw}
	Let $L>0$ be any integer and $\mathcal{T}_L \sim \gw(\xi)_L$.	Set $\nu^\theta_{\mathcal{T}_L}$ to denote the stationary distribution of $\ddp^{\lambda,\theta}_{\rho^+}(\mathcal{T}_L^+)$ on the space $\{0,1\}^{\mathcal{T}_L}$. Then there exist constants $K, \lambda_0>0$ depending only on $\xi$ such that for all $\lambda\leq \lambda_0$ and $L$,
	\begin{equation*}
	\E\left[\nu^\theta_{\mathcal{T}_L}(\zero)^{-1} \right] \leq 2,
	\end{equation*}
	where $\theta$ is given by $\theta=K\lambda$.
\end{proposition}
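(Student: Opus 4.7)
The plan is to induct on $L$, aided by the renewal identity
\begin{equation*}
\nu^\theta_{\mathcal{T}_L}(\zero)^{-1} \;=\; 1 + \lambda\,\E\bigl[S^\theta_L \,\big|\, \mathcal{T}_L\bigr],
\end{equation*}
where $S^\theta_L$ is the hitting time of $\zero$ from $\one_\rho$ in $\ddp^{\lambda,\theta}_{\rho^+}(\mathcal{T}_L^+)$; this is immediate since $r(\zero)=0$ makes the holding time at $\zero$ an $\Exp(\lambda)$ variable in the delayed chain, so the standard ratio of holding time to return time applies. To bound $\E[S_L^\theta\mid\mathcal{T}_L]$, I pass to the modified delayed process that blocks recoveries at $\rho$ unless the state is $\one_\rho$ (cf.\ Lemma~\ref{lem:cp on gw}). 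Because $\theta^{-r}$ is monotone in the state and the graphical coupling makes the modified state pointwise larger than the original at every time, $\E[S^\theta_L\mid\mathcal{T}_L]\le\E[\widetilde{S}^\theta_L\mid\mathcal{T}_L]$, where $\widetilde{S}^\theta_L$ is the extinction time of the modification.

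Next I adapt the renewal computation (\ref{eq:geometric trial cp}) to the delayed chain. From $\one_\rho$ the total exit rate is $\theta(1+\lambda D)$ and the chain either recovers at $\rho$ (probability $\tfrac{1}{1+\lambda D}$) or starts an excursion through a uniform child $v_i$; geometric summation then gives
\begin{equation*}
\E[\widetilde{S}^\theta_L\mid\mathcal{T}_L]\;=\;\tfrac{1}{\theta}+\lambda\sum_{i=1}^D\E[\mathcal{E}^\theta_i\mid\mathcal{T}_L],
\end{equation*}
where $\mathcal{E}^\theta_i$ is the delayed excursion time from $\one_{\{\rho,v_i\}}$ back to $\one_\rho$. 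Since the delayed and non-delayed processes share the same jump chain, rescaling of holding times yields $\E[\mathcal{E}^\theta_i\mid\mathcal{T}_L]=\E\bigl[\int_0^{T^{(i)}}\theta^{-r(X_t)}\,dt\bigr]$, where the integral is over the non-delayed excursion of the product chain $\cp^\otimes_\rho(\mathcal{T}_L)$ from $\one_{v_i}$ to its all-healthy state. The key obstacle is that the global depth $r(x)=1+\max_j r_j(x|_j)$ couples the subtrees and wrecks the product structure; I resolve this by the blunt estimate $\theta^{-r(x)}\le\theta^{-1}\sum_{j=1}^D\theta^{-r_j(x|_j)}$. Applying the standard product-chain identity $\E^{\otimes}_{\zero}\bigl[\int_0^{T^+_\zero}f(X_t)\,dt\bigr]=\E_{\pi^{(D)}}[f]/(\pi^{(D)}(\zero)\lambda D)$ to each $f(X)=\theta^{-r_j(X|_j)}$, using $\pi^{(D)}=\otimes_j\pi_j$ together with $\E_{\pi_j}[\theta^{-r_j}]=\pi_j(\zero)\nu^\theta_{\mathcal{T}_{v_j}}(\zero)^{-1}$ (which is just (\ref{eq:stationary distr basic}) on subtree $j$), and subtracting the $\zero$-holding contribution, one obtains
\begin{equation*}
\nu^\theta_{\mathcal{T}_L}(\zero)^{-1}\;\le\;1+\frac{\lambda}{\theta}\Biggl(1+\sum_{j=1}^D\frac{\nu^\theta_{\mathcal{T}_{v_j}}(\zero)^{-1}}{\prod_{k\neq j}\pi_k(\zero)}\Biggr).
\end{equation*}

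To close the induction, set $\theta=K\lambda$, so $\lambda/\theta=1/K$, and take expectations over $\mathcal{T}_L\sim\gw(\xi)_L$; since the subtrees $\mathcal{T}_{v_j}$ are i.i.d.\ $\gw(\xi)_{L-1}$ independent of $D\sim\xi$, the sum factorizes. Combining the inductive hypothesis $\E[\nu^\theta_{\mathcal{T}_{v_j}}(\zero)^{-1}]\le 2$ with Lemma~\ref{lem:cp on gw}'s bound $\E[\pi_k(\zero)^{-1}]=1+\lambda\,\E S_{L-1}\le 1+e\lambda$ gives
\begin{equation*}
\E[\nu^\theta_{\mathcal{T}_L}(\zero)^{-1}]\;\le\;1+\tfrac{1}{K}\bigl(1+2\,\E_{D\sim\xi}[D(1+e\lambda)^{D-1}]\bigr).
\end{equation*}
The exponential-tail hypothesis $\E e^{cD}<\infty$, via a routine moment generating function estimate (using $D\le\epsilon^{-1}e^{\epsilon D}$ to push $D(1+e\lambda)^{D-1}$ below $e^{cD}$ when $\lambda$ is small), bounds the last expectation by a finite constant $B=B(\xi)$ uniformly for $\lambda\le\lambda_0$, and then choosing $K\ge 1+2B$ makes the right-hand side $\le 2$. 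The base case $L=0$ is immediate, since $\mathcal{T}_0=\{\rho\}$ gives $\nu^\theta_{\mathcal{T}_0}(\zero)^{-1}=1+\lambda/\theta=1+1/K\le 2$ whenever $K\ge 1$. The main difficulty throughout is the subtree coupling in the delay factor $\theta^{-r(x)}$; the $\max$-to-$\sum$ estimate costs a factor $\theta^{-1}$, which is precisely what the choice $\theta=K\lambda$ with $K$ large absorbs.
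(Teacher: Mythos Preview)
Your argument is correct. The overall architecture is the same as the paper's---the modified process, a renewal computation at $\one_\rho$, and a max-to-sum decoupling of the subtrees---but the key decoupling step is implemented differently. The paper bounds $r(x)=\max_j r_j(x|_j)\le\sum_j r_j(x|_j)$ in the \emph{exponent}, which immediately yields the product inequality $\widetilde\nu^\theta_{\mathcal T_L}(\zero)\ge\nu^\otimes_{\mathcal T_L}(\zero)=\prod_j\nu^\theta_{\mathcal T_{v_j}}(\zero)$ and hence a clean multiplicative recursion $\E[S_L^\theta]\le\theta^{-1}\E_D[(1+\lambda\E[S_{L-1}^\theta])^D]$ entirely in terms of delayed quantities. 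You instead bound $\theta^{-\max_j r_j}\le\sum_j\theta^{-r_j}$ pointwise inside an occupation-time integral and invoke the cycle formula for the undelayed product chain; this produces an additive expression that mixes the delayed objects $\nu^\theta_{\mathcal T_{v_j}}(\zero)^{-1}$ with the undelayed ones $\pi_k(\zero)^{-1}$, forcing you to import Lemma~\ref{lem:cp on gw} as an auxiliary input to close the induction. Both routes work; the paper's is more self-contained and yields the sharper product recursion, while yours trades that for a slightly more elementary estimate at the cost of an extra dependence on the earlier lemma.
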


\begin{proof}
	Let $S_L^\theta$ be the first time when $(X_t)\sim \ddp^{\lambda,\theta}_{\rho^+}(\mathcal{T}_L^+;\one_\rho) $ reaches state $\zero$. We first derive an analog of (\ref{eq:cp tree}) based on the methods from Lemma \ref{lem:cp on gw}. To this end, define $(\widetilde{X}_t)\sim \widetilde{\ddp}^{\lambda,\theta}_{\rho^+;\rho}(\mathcal{T}_L^+; \one_{\rho})$ to be the modification of $(X_t)$ in such a way that
	\begin{enumerate}
		\item [1.] $(\widetilde{X}_t)$ shares the same infection and recovery clocks as $(X_t)$.
		
		\item [2.] In $(\widetilde{X}_t)$, healing attempt at $\rho$ is ignored if there exists an infected vertex other than $\rho^+$ and $\rho$ at that moment.
	\end{enumerate}
	
	Let $D\sim \xi$ denote the number of children of $\rho$, and let $\{\mathcal{T}_{u_i} :i=1,\ldots,D \}$ be the subtrees from the children $u_1,\ldots,u_D$ of $\rho$. Then, when there is an infected vertex other than $\rho^+$ and $\rho$ in $\widetilde{X}_t$, it can be regarded as the slowed-down version of process $\ddp^{\lambda,\theta}_\rho (\mathcal{T}_L)$, where it spends longer time  by the factor of $\theta^{-1}$ at each state, since the tree $\mathcal{T}_L$ is one depth lower than $\mathcal{T}_L^+$. Note that in $\ddp^{\lambda,\theta}_\rho(\mathcal{T}_L)$, $\rho  $ is the permanently infected parent that has $D$ children.
	
	Let $\widetilde{S}_L^\theta$ be the first time when $\widetilde{X}_t$ becomes $\zero$. Also, let $\widetilde{S}_i^\theta$ be the first time when $\ddp^{\lambda,\theta}_\rho (\mathcal{T}_L; \one_{u_i})$ is $\zero$, and set $\widetilde{S}^\theta$ to be the average of $\widetilde{S}_i^\theta$ over $i=1,\ldots, D$.
	Then, we can apply the same argument as Lemma \ref{lem:cp on gw} to this setting and deduce that
	\begin{equation}\label{eq:gw delayed recursion}
	\begin{split}
	\E\left[\left.\widetilde{S}_L^\theta\,\right|\,\mathcal{T}_L  \right]
	&=
	\sum_{k=0}^\infty \left(\frac{\lambda D}{1+\lambda D} \right)^k \frac{1}{1+\lambda D} \left[\frac{k+1}{\theta(1+\lambda D)} + \frac{k}{\theta} \E\left[\widetilde{S}^\theta\,|\,\mathcal{T}_L\right] \right]\\
	&=
	\frac{1}{\theta} \left(1+\lambda D \E\left[\widetilde{S}^\theta\,|\,\mathcal{T}_L\right]\right).
	\end{split}
	\end{equation}
	
	Now we relate these equations with the stationary distributions. Let $\widetilde{\nu}_{\mathcal{T}_L}^\theta, \nu_{\mathcal{T}_{u_i}}^\theta$ be the stationary distributions of $\ddp^{\lambda,\theta}_\rho(\mathcal{T}_L)$, $\ddp^{\lambda,\theta}_\rho (\mathcal{T}_{u_i}^+)$, respectively. Further, define $\nu^\otimes_{\mathcal{T}_L} = \otimes_{i=1}^D \nu_{\mathcal{T}_{u_i}}^\theta$. In contrast to what we had in Lemma \ref{lem:cp on gw}, we do not necessarily have $\widetilde{\nu}_{\mathcal{T}_L}^\theta = \nu^\otimes_{\mathcal{T}_L}$.
	
	For each state $x\in \Omega_L := \{0,1 \}^{\cup_{i=1}^D \mathcal{T}_{u_i}}$ of $\ddp^{\lambda,\theta}_\rho(\mathcal{T}_L)$, we decompose it into $x=(x_i)_{i=1}^D$, where $x_i \in \Omega_i := \{0,1\}^{\mathcal{T}_{u_i}}$. Setting $\pi_{\mathcal{T}_{u_i}}$ to be the stationary distribution of $\cp^{\lambda}_\rho(\mathcal{T}_{u_i}^+)$ and $\pi^\otimes_{\mathcal{T}_L}:= \otimes_{i=1}^D \pi_{\mathcal{T}_{u_i}}$, the equation (\ref{eq:stationary distr basic}) implies that
	
	\begin{equation}\label{eq:stationary of delayed gw}
	\begin{split}
	\widetilde{\nu}_{\mathcal{T}_L}^\theta (x)
	&=
	\frac{\theta^{-r(x;\mathcal{T}_L)}\pi_{\mathcal{T}_L}^\otimes(x) }{\sum_{y\in \Omega_L} \theta^{-r(y;\mathcal{T}_L)}\pi_{\mathcal{T}_L}^\otimes(y)  }
	=
	\frac{\theta^{-r(x;\mathcal{T}_L)} \prod_{i=1}^D \pi_{\mathcal{T}_{u_i}}(x_i) }{\sum_{y\in \Omega_L} \theta^{-r(y;\mathcal{T}_L)}\prod_{i=1}^D \pi_{\mathcal{T}_{u_i}}(y_i)  };\\
	{\nu}_{\mathcal{T}_L}^\otimes (x)
	&=
	\prod_{i=1}^D \left[\frac{\theta^{-r(x_i;\mathcal{T}_{u_i})}\pi_{\mathcal{T}_{u_i}}(x_i) }{\sum_{y_i\in \Omega_i} \theta^{-r(y_i;\mathcal{T}_{u_i})}\pi_{\mathcal{T}_{u_i}(y_i)  } }\right] =
	\frac{\theta^{-\sum_{i=1}^D r(x_i;\mathcal{T}_{u_i})} \prod_{i=1}^D \pi_{\mathcal{T}_{u_i}}(x_i) }{\sum_{y\in \Omega_L} \theta^{-\sum_{i=1}^D r(y_i;\mathcal{T}_{u_i})}\prod_{i=1}^D \pi_{\mathcal{T}_{u_i}}(y_i)  }.
	\end{split}
	\end{equation}
	
	\noindent Notice that $$r(x;\mathcal{T}_L) =
	\max \{r(x_i;\mathcal{T}_{u_i}) : i=1,\ldots,D \}
	\leq \sum_{i=1}^D r(x_i;\mathcal{T}_{u_i}).$$
	Therefore, deeper states tend to have larger weight in $\nu_{\mathcal{T}_L}^\otimes$ than in $\widetilde{\nu}_{\mathcal{T}_L}^\theta$, which implies that $$\nu_{\mathcal{T}_L}^\otimes (\zero) \leq \widetilde{\nu}_{\mathcal{T}_L}^\theta(\zero).$$
	Moreover, we have the following equations as an analog of (\ref{eq:stationary and running time 1}), (\ref{eq:stationary and running time 2}):
	\begin{equation}\label{eq:stationary and delayed running time}
	\widetilde{\nu}_{\mathcal{T}_L}^\theta(\zero)
	=\frac{1}{1+\lambda D \E[\widetilde{S}^\theta | \mathcal{T}_L ]}, \quad 
	\nu_{\mathcal{T}_L}^\otimes (\zero)
	= \prod_{i=1}^D \left[\frac{1}{1+\lambda \E \left[S^\theta_{L-1}|\mathcal{T}_{u_i} \right]} \right].
	\end{equation}
	We combine our discussion with (\ref{eq:gw delayed recursion}) to deduce that
	\begin{equation*}
	\E\left[\left. S_L^\theta \right| \mathcal{T}_L \right]
	\leq
	\E \left[\left. \widetilde{S}_L^\theta \right| \mathcal{T}_L \right]
	\leq
	\frac{1}{\theta} \prod_{i=1}^D \left(1+ \lambda \E \left[\left.S_{L-1}^\theta \right| \mathcal{T}_{u_i} \right] \right),
	\end{equation*}
	and hence
	\begin{equation}\label{eq:gw delayed rec ineq}
	\E\left[S_L^\theta\right] \leq \frac{1}{\theta}
	\E_{D\sim \xi} \left[\exp \left(\lambda \E \left[S_{L-1}^\theta \right] D \right) \right].
	\end{equation}
	
	The final step is to adjust the constants and deduce the conclusion. Let $c, M>0$ be constants satisfying $\E_{D\sim \xi} \exp(c D) =M$. We set $K,\lambda_0>0$ and $\theta$ as
	\begin{equation*}
	K=\max \left\{ \frac{2\log M}{c \log 2}, 2 \right\}, \quad \lambda_0 = \frac{1}{2K}, \quad\theta = K\lambda,
	\end{equation*}
	where $\lambda\in (0,\lambda_0]$ is arbitrary. For $L=0$, we have $\E S_0^\theta = \theta^{-1}$. Suppose that $\E S_{L-1}^\theta \leq 2/\theta$. Then, the right hand side of \eqref{eq:gw delayed rec ineq} can be bounded by
	\begin{equation*}
	\begin{split}
	&\frac{1}{\theta} \E_{D\sim \xi}\left[\exp \left(\lambda \E\left[S_{L-1}^\theta \right]D \right) \right]
	=
	\frac{1}{\theta} \E_{D\sim \xi}\left[\exp \left(\frac{\lambda \E\left[S_{L-1}^\theta \right]}{c} \cdot cD \right) \right]\\
	&\quad \leq \frac{1}{\theta}
	\exp\left\{\log M \left(\frac{\lambda}{c}\E\left[S_{L-1}^\theta \right] \right)  \right\}
	\leq \frac{1}{\theta} M^{\frac{2}{cK}}\leq\frac{2}{\theta},
	\end{split}
	\end{equation*}
	where the first inequality is due to Jensen's inequality. Therefore, for $K,\lambda_0$ as above,
	we have $\E S_L^\theta \leq 2/\theta$ for all $\lambda\leq\lambda_0$ with $\theta=K\lambda$. Finally, note that  $\nu_{\mathcal{T}_L}^\theta (\zero)$ is given by
	\begin{equation*}
	\nu_{\mathcal{T}_L}^\theta (\zero)  
	=\frac{1}{1+\lambda  \E[S_L^\theta | \mathcal{T}_L ]}.
	\end{equation*}
	Thus, we obtain the desired conclusion by taking expectation over its reciprocal and plugging in the estimate $\E S_L^\theta \leq 2/\theta$.
\end{proof}

\begin{proof}[Proof of Theorem \ref{thm:decay of depth}]
	Let $L>0$ be an arbitrary integer and let $\mathcal{T}_L \sim \gw(\xi)_L$. Also, let $K, \lambda_0$ be the constants given by Lemma \ref{prop:dcp on gw}, and let $\pi_{\mathcal{T}_L}$, $\nu_{\mathcal{T}_L}^\theta$ be the stationary distributions of $\cp^\lambda_{\rho^+}(\mathcal{T}_L^+)$ and $\ddp^{\lambda,\theta}_{\rho^+} (\mathcal{T}_L^+)$, respectively, with $\theta = K\lambda$.
	
	Set $\Omega = \{0,1\}^{\mathcal{T}_L}$, and define $$A:= \{x\in \Omega: r(x;\mathcal{T}_L^+) \geq h \}.$$
	We first observe that
	\begin{equation*}
	\frac{\pi_{\mathcal{T}_L}(A)}{\pi_{\mathcal{T}_L}(\zero)} = \frac{\sum_{x\in A} \theta^{r(x;\mathcal{T}_L^+)} \widetilde{\nu}_{\mathcal{T}_L}^\theta(x) }{\widetilde{\nu}_{\mathcal{T}_L}^\theta(\zero) }
	\leq
	\frac{\widetilde{\nu}_{\mathcal{T}_L}^\theta(A)}{\widetilde{\nu}_{\mathcal{T}_L}^\theta(\zero)} \theta^h.
	\end{equation*}
	Proposition \ref{prop:dcp on gw} and Markov's inequality imply that with probability $1-m^{-1}$ over the choice of $\mathcal{T}_L$, we have $\widetilde{\nu}_{\mathcal{T}_L}^\theta(\zero)^{-1}\leq 2m $, and hence for such choices
	\begin{equation}\label{eq:decay of stationary msr}
	\frac{\pi_{\mathcal{T}_L}(A)}{\pi_{\mathcal{T}_L}(\zero)} \leq 2m (K\lambda)^h.
	\end{equation}
	Moreover, if $(X_t)\sim \cp^\lambda_{\rho^+}(\mathcal{T}_L^+)$ hits $A$, then the expected time needed for $X_t$ to escape from $A$ is at least $1$. Indeed, it takes a unit expected time just to heal one infected site of depth at least $h$. In other words, if we set $S_L,H $ as in the statement and define $\gamma_L(h) := |\{t\in [0,S_L] : X_t \in A \}|$ where $|\cdot|$ denotes the Lebesgue measure, then
	\begin{equation*}
	\E [\gamma_L(h) \,|\,H\geq h,\, \mathcal{T}_L ] \geq 1.
	\end{equation*}
	Combining this with (\ref{eq:decay of stationary msr}) tells us that
	\begin{eqnarray*}
		\P(H\geq h \,|\,\mathcal{T}_L) &\leq\;&
		\E[\gamma_L(h)\,|\, H\geq h,\mathcal{T}_L]\, \P(H\geq h \,|\,\mathcal{T}_L) \leq\;
		\E [\gamma_L(h)\,|\,\mathcal{T}_L]\nonumber\\
		& \leq\;& \frac{\pi_{\mathcal{T}_L}(A)}{\pi_{\mathcal{T}_L}(\zero)}
		\;\leq 2m (K\lambda)^h,
	\end{eqnarray*}
	with probability $1-m^{-1}$ over the choice of $\mathcal{T}_L$. 
\end{proof}

We conclude this section by completing the proof of Theorem \ref{thm:gw main}.

\begin{proof}[Proof of Theorem \ref{thm:gw main}]  Let $K,\lambda_0$ be given as Theorem \ref{thm:decay of depth}, and set $\lambda \leq \lambda_0$ to be a constant such that $K\lambda<1$. Let $\delta>0$ be any given small number, and set $h$ to be the constant satisfying $(K\lambda)^h=\frac{\delta^2}{8}$. Further, let $\mathcal{T}\sim \gw(\xi)$ and $\rho$ be its root.
	
	Define $E(h)$ to be the event that the infection inside $\cp^\lambda(\mathcal{T};\one_{\rho})$ does not go deeper than depth $h$ until dying out. Then, Theorem \ref{thm:decay of depth} implies that $$\mathbb{P}(E(h)) \geq 1-\delta, $$
	which can be seen by setting $m=\frac{2}{\delta}$.
	
	Let $\mathcal{T}_h$ be the truncated tree of $\mathcal{T}$ at depth $h$, and couple the processes $\cp^\lambda(\mathcal{T};\one_\rho)$ and $\cp^\lambda(\mathcal{T}_h;\one_\rho)$ by identifying the recoveries and infections inside $\mathcal{T}_h$. Then, on $E(h)$, $\cp^\lambda(\mathcal{T};\one_\rho)$ can be regarded as $\cp^\lambda(\mathcal{T}_h;\one_\rho)$.  Let $R$ and $R_h$ be the times when $\cp^\lambda(\mathcal{T};\one_\rho)$ and $\cp^\lambda(\mathcal{T}_h;\one_\rho)$ reaches $\zero$.
	Then, Theorem \ref{thm:cp on gw} tells us that
	\begin{equation*}
	\E[R|E(h)] = \E[R_h | E(h)] \leq \frac{\E R_h}{\P (E(h))} <\infty.
	\end{equation*}
	Thus, for $(X_t)\sim \cp^\lambda(\mathcal{T};\one_\rho)$, we have
	$$\P (X_t \neq \zero ~\textnormal{for all } t\geq 0) \leq \delta.$$ Since this holds true for all $\delta>0$, we conclude that $\lambda_1(\gw(\xi)) \geq \lambda >0$.
\end{proof}

\section{Short survival in random graphs}\label{sec:sub rg}

We turn our attention to the contact process on random graphs $G \sim \mathcal{G}(n,\mu)$. Throughout the rest of the paper, $\mu$ is a probability distribution on $\mathbb{N}$ that satisfies for $D\sim \mu$,
$$\sigma^2:= \E D^2 <\infty \quad \textnormal{and}\quad b :=\frac{\E D(D-1)}{\E D}>1, $$ as discussed in Section  \ref{subsec: rg basic}. In this section, in particular, we assume that $\mu$ has an exponential tail, i.e., $\E\exp(cD)<\infty$ for some $c>0$. Our goal is to  establish Theorem \ref{thm: exp tail main}-(1), by proving the following:

\begin{theorem}\label{thm:sub}
	Let $\mu$ be as above and $G \sim \mathcal{G}(n,\mu)$. For a vertex $v\in G$,  let $T_v$ denote the time when $\cp^\lambda (G; \one_v )$ reaches  the state $\zero$. Then there exist  events $\mathcal{E}_1$, $\mathcal{E}_2(G)$, and constants $B, \lambda_0>0$ depending on $\mu$ such that the following hold:
	\begin{itemize}
		\item $\mathcal{E}_1$ is an event over the random graphs such that $\P(G\in \mathcal{E}_1) = 1-o(1)$.
		
		\item $\mathcal{E}_2(G)$ is an event over the contact process $\cp^\lambda(G)$ such that $$\P(\mathcal{E}_2 \,|\, G\in \mathcal{E}_1 )= 1-o(1) .$$
		
		\item For all $\lambda \in (0, \lambda_0)$ we have
		\begin{equation*}
		\frac{1}{n}\sum_{v\in G} \E [T_v | G\in \mathcal{E} ] \leq B,
		\end{equation*}
		for all large enough $n$.
	\end{itemize}
\end{theorem}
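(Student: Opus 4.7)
The strategy is to combine the tree estimates of Section~\ref{sec:sub gw} with the local weak convergence of $G \sim \mathcal{G}(n,\mu)$ to the size-biased Galton-Watson tree $\mathcal{T}(\mu) \sim \gw(\mu,\mu')$. Since $\mu$ has an exponential tail, so does $\mu'$ (because $\mu'(k-1) \propto k\mu(k)$), and therefore Theorems~\ref{thm:cp on gw} and \ref{thm:decay of depth} apply to $\mathcal{T}(\mu)$ after an obvious modification at the root. I would fix $\lambda_0$ so that both theorems hold with some constant $K = K(\mu)$, take $\lambda < \lambda_0$ with $K\lambda < 1$, and choose a \emph{constant} depth $h = h(\lambda,\mu)$ so that $\eta := (K\lambda)^h$ is arbitrarily small.

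The graph event $\mathcal{E}_1$ would consist of standard ``nice'' properties of the configuration model: (i) the maximum degree is at most $(\log n)^2$, (ii) fewer than $\eta n$ vertices $v$ have a cycle in $N(v,h)$, and (iii) the sum of $|N(v,h)|$ over $v$ is $O(n)$. Classical cycle-counting and exponential-tail arguments yield $\P(G \in \mathcal{E}_1) = 1 - o(1)$. Call $v$ \emph{good} if $N(v,h)$ is a tree; for good $v$, the breadth-first exploration of $N(v,h)$ couples with the top $h$ generations of a full $\gw(\mu,\mu')$ tree $\widehat{\mathcal{T}}_v$, and via the graphical representation I couple $\cp^\lambda(G;\one_v)$ with $\cp^\lambda(\widehat{\mathcal{T}}_v;\one_\rho)$ by sharing all clocks inside $N(v,h)$. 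The two processes then agree until the infection reaches depth $h$. Define $\mathcal{E}_2(G)$ as the event that for all but $\sqrt{\eta}\, n$ good vertices $v$ the infection in $\widehat{\mathcal{T}}_v$ never reaches depth $h$. Applying Theorem~\ref{thm:decay of depth} with $m = \eta^{-1/2}$ bounds the per-vertex escape probability by $2\sqrt{\eta}$ on a $(1-\sqrt{\eta})$-fraction of trees, so a Markov bound on the count of escaping good vertices yields $\P(\mathcal{E}_2 \mid G \in \mathcal{E}_1) = 1 - o(1)$.

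On $\mathcal{E}_1 \cap \mathcal{E}_2$, Theorem~\ref{thm:cp on gw} applied to $\widehat{\mathcal{T}}_v$ gives $\E[T_v \mid \ldots] \le C(\mu)$ for good non-escaping $v$, contributing at most $C$ to the average. The main obstacle is handling the remaining $O(\sqrt{\eta}\, n)$ bad vertices (cyclic neighborhoods, or good vertices whose infection escapes $N(v,h)$): for those whose $N(v,h)$ contains a single cycle, I would invoke the \emph{modified Galton-Watson trees with one cycle-forming edge} flagged in Section~\ref{subsec:maintec}, and adapt the recursion of Lemma~\ref{lem:cp on gw} so that its expected survival time is still bounded by a constant; for the truly rare vertices (multiple cycles in $N(v,h)$, or escaping good vertices) I would combine a crude worst-case bound $\E[T_v] \le n^{O(1)}$ (using the $(\log n)^2$ maximum-degree bound inside $\mathcal{E}_1$) with the $o(n)$ count, choosing $\eta$ small enough that $o(n) \cdot n^{O(1)}/n = o(1)$. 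Verifying that the recursion of Lemma~\ref{lem:cp on gw} remains valid under the one-cycle modification, and correctly balancing the $\eta$-error against the polynomial worst-case survival bound, is the principal technical difficulty.
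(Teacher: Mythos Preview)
Your plan has two genuine gaps, both in the treatment of ``bad'' vertices.

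First, with a \emph{constant} depth $h$ the escape probability $2\sqrt{\eta}$ is a constant, so on $\mathcal{E}_2$ you allow up to $\sqrt{\eta}\,n$ escaping good vertices. That is $\Theta(n)$, not $o(n)$; making $\eta$ small only shrinks the constant in front of $n$. So the ``$o(n)$ count'' in your last step is incorrect, and the balancing $o(n)\cdot n^{O(1)}/n = o(1)$ fails regardless of how small you take $\eta$ (a constant cannot beat a polynomial in $n$).

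Second, and more seriously, the ``crude worst-case bound $\E[T_v]\le n^{O(1)}$'' is unjustified. A bound on the maximum degree gives no a~priori polynomial bound on the extinction time of the contact process; indeed, proving that $T_v$ is not exponentially large is essentially the content of the theorem. Once the infection leaves $N(v,h)$ you have no control on $T_v$ without invoking the very statement you are trying to prove.

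The paper resolves both issues simultaneously by taking the depth to be $L_n=\gamma_1\log n$ rather than a constant. Then $(C\lambda)^{L_n}\le n^{-4}$ for small enough $\lambda$, so a union bound over all $n$ vertices makes $\mathcal{E}_2=\bigcap_v\{H_v<L_n\}$ hold \textsf{whp}: \emph{no} vertex escapes, and the crude bound is never needed. The price is that at depth $\Theta(\log n)$ the naive coupling of $N(v,L_n)$ with $\gw(\mu,\mu')$ fails (the $\Theta(n^{-1})$ per-step error blows up, and cycles appear with nontrivial probability). This is why the paper introduces the augmented distributions $\mu^\sharp,\widetilde{\mu}^\sharp$ (to get genuine stochastic domination of the empirical degree law, Lemma~\ref{lem:aug}) and the GWC/EGW processes (to absorb the single cycle guaranteed by Lemma~\ref{lem:1cyc}); it then reproves the analogues of Theorem~\ref{thm:cp on gw} and Theorem~\ref{thm:decay of depth} for these cycle-added trees (Proposition~\ref{prop:cp on egw}, Lemma~\ref{lem:dcp on egw}). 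Your instinct about needing a one-cycle modification was right, but it must be carried out at logarithmic depth with stochastic domination, not at constant depth with an approximate coupling.
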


\noindent Then, our main theorem follows simply by applying Markov's inequality.

\begin{proof}[Proof of Theorem \ref{thm: exp tail main}-(1)] Let $T$ be the time when $\cp^\lambda(G;\one_G)$ reaches the state $\zero$.  On the event $\mathcal{E}_1$ and $\mathcal{E}_2=\mathcal{E}_2(G)$ given in Theorem \ref{thm:sub}, for any constant $C>0$ we have
	\begin{equation*}
	\begin{split}
	\P& \left( \left. T > Cn \,\right|\, \mathcal{E}_1,\, \mathcal{E}_2  \right)
	=
	\P \left( \left. \max_{v \in G} T_v > Cn \,\right|\,\mathcal{E}_1,\, \mathcal{E}_2  \right)\leq
	\sum_{v\in G} \P \left( T_v >Cn \,|\, \mathcal{E}_1,\, \mathcal{E}_2 \right)
	\leq \frac{B}{C},
	\end{split}
	\end{equation*}
	where  the first equality is due to Lemma \ref{lem:graph rep1}, and the second is from Markov's inequality. Since the events $\mathcal{E}_1$ and $\mathcal{E}_2$ given $G\in\mathcal{E}_1$ both hold \textsf{whp}, we obtain the conclusion.  \end{proof}

In the rest of the section we focus on proving Theorem \ref{thm:sub}. Our proof relies much on the fact that the local neighborhood $N(v,L):=\{u\in G: \textnormal{dist}(u,v)\leq L \}$ of a fixed vertex $v$ roughly looks like a Galton-Watson branching process. Hence the results from Section  \ref{sec:sub gw} will play a huge role in this section as well.

However, since $\mathcal{G}(n,\mu)$ contains  cycles with nontrivial probability, we introduce a variant of Galton-Watson trees that can cover the effect of cycles in $\mathcal{G}(n,\mu)$, and develop a delicate coupling argument with the local neighborhood $N(v,L)$. This new branching process will stochastically dominate  $N(v,L)$ in terms of isomorphic embeddings of graphs, and hence the contact process will survive for a longer time. The result will then follow by showing Theorem \ref{thm:sub} for this new graph.

\subsection{Coupling the local neighborhood}


Let $G\sim \mathcal{G}(n,\mu)$, where $\mu$ has an exponential tail, and let $\mu'$ denote the size-biased distribution of $\mu$. As discussed in Section  \ref{subsec: rg basic}, it is well known that the local neighborhood $N(v,L)$ around $v\in G$ behaves roughly as the Galton-Watson process $\gw(\mu,\mu')_L$. However, the standard coupling between the two object produces an error at least $\Theta(n^{-1})$. Therefore, we consider augmented versions of $\mu, \mu'$ to stochastically dominate $N(v,L)$ by a larger geometry.

\begin{definition}[Augmented distribution]\label{def:aug}
	Let $\mu$ be a probability distribution on $\mathbb{N}$ with an exponential tail. Let $k_0 = \max \{k:\sum_{j\geq k}\sqrt{p_j} \geq 1/2 \}$, and $k_{\textnormal{max}} :=\max\{k: p_k >0 \} $, with $k_{\textnormal{max}}=+\infty$ if the maximum does not exist. When $k_0< k_{\textnormal{max}}$, we define the augmented distribution $\mu^\sharp$ of $\mu$ by
	\begin{equation*}
	\mu ^\sharp(j) = \frac{1}{Z}
	\begin{cases}
	p_j/2 & \textnormal{if} ~j \leq k_0;\\
	\sqrt{p_j} & \textnormal{if} ~j > k_0,
	\end{cases}
	\end{equation*}
	where $Z= \sum_{j\leq k_0} p_j/2 + \sum_{j > k_0} \sqrt{p_j}.$ If $k_0=k_{\textnormal{max}}$, then we let
	\begin{equation*}
	\mu ^\sharp(j) = \frac{1}{Z}
	\begin{cases}
	p_j/2 & \textnormal{if} ~j < k_0;\\
	\sqrt{p_j} & \textnormal{if} ~j = k_0,
	\end{cases}
	\end{equation*}
	where $Z=  \sum_{j<k_0} p_{j}/2 +  \sqrt{p_{k_0}}.$
	
	
\end{definition}

We observe some of the basic properties of augmented distributions in the following lemma. The proof is based on  elementary applications of estimating large deviation events, and is postponed to Appendix (Section \ref{app:aug}) since it is a bit technical and less related with the main theme of the work.

\begin{lemma}\label{lem:aug}
	Let $\mu$ be a probability distribution on $\mathbb{N}$.
	\begin{enumerate}
		\item[(1)] If $\mu$ has an exponential tail, then so does $\mu^\sharp$.
		
		\item[(2)] 	Let $D_1,\ldots,D_n$ be $n$ independent samples of $\mu$. For a subset $\Delta \subset [n]$, let $\{p_k^\Delta \}_k$ denote the empirical distribution of $\{D_i \}_{i\in [n]\setminus \Delta}$. With high probability over the choice of $D_i$'s, $\{p_k^\Delta \}_k $ is stochastically dominated by $\mu^\sharp,$ for any $\Delta \in [n]$ with $|\Delta| \leq n/3$
	\end{enumerate}
\end{lemma}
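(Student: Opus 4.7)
Part (1) is almost immediate. The normalizing constant $Z$ lies in $[1/4,1]$: the upper bound $Z<1$ is trivial since both $\sum_{j\leq k_0}p_j/2<1/2$ and $\sum_{j>k_0}\sqrt{p_j}<1/2$ (the latter being the defining property of $k_0$), while the lower bound uses $\sqrt{p_j}\geq p_j$ to force $\sum_{j\leq k_0}p_j>1/2$, whence $Z\geq 1/4$. Given an exponential tail $p_j\leq Ce^{-cj}$, we have $\sqrt{p_j}\leq \sqrt{C}\,e^{-cj/2}$, so in either piece of the definition $\mu^\sharp(j)\leq Z^{-1}\sqrt{C}\,e^{-cj/2}$, verifying that $\mu^\sharp$ again has an exponential tail (at half the rate, up to constants).

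For part (2), I would reduce the stochastic-domination statement to a concentration estimate on the degree counts. Set $N_j := |\{i\in[n]:D_i=j\}|$ and $S_k := \sum_{j\geq k}N_j \sim \Bin\bigl(n,\sum_{j\geq k}p_j\bigr)$. The empirical tail
\[
\sum_{j\geq k}p_j^\Delta \;=\; \frac{S_k \,-\, |\{i\in\Delta : D_i\geq k\}|}{n-|\Delta|}
\]
is maximized by choosing $\Delta$ to avoid every sample with $D_i\geq k$, which is possible whenever $|\Delta|\leq n-S_k$; combined with the trivial bound $\leq 1$ this gives $\sum_{j\geq k}p_j^\Delta \leq \min\bigl(1,\tfrac{3S_k}{2n}\bigr)$. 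Hence the lemma reduces to showing that, with high probability,
\[
S_k \;\leq\; \tfrac{2n}{3}\sum_{j\geq k}\mu^\sharp(j) \qquad \text{for every } k\geq 1,
\]
after which the domination holds simultaneously for every admissible $\Delta$.

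I would verify this tail bound by Chernoff concentration of $S_k$ followed by a union bound over $k$, split into three regimes. In the bulk range $k\leq k_0$, $\E S_k = n\sum_{j\geq k}p_j$ is large, so Chernoff gives $S_k \leq (1+o(1))\E S_k$; the factor $1/2$ attached to the $p_j$-piece in the definition of $\mu^\sharp$ is chosen precisely so that $\tfrac{2n}{3}\sum_{j\geq k}\mu^\sharp(j)$ comfortably exceeds $(1+o(1))\E S_k$. In the tail range $k_0 < k \leq C\log n$, the allowed bound $\tfrac{2n}{3Z}\sum_{j\geq k}\sqrt{p_j}$ beats $\E S_k = n\sum_{j\geq k}p_j$ by a factor that grows like $\sqrt{p_j}/p_j \to \infty$, leaving plenty of slack for Chernoff fluctuations. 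Finally, for $k>C\log n$, the exponential tail combined with Markov's inequality gives $S_k = 0$ with high probability, so the bound is trivially verified.

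The main obstacle will be executing the union bound cleanly across these regimes, in particular at the transition $k\approx k_0$ and for degree values $j$ with $np_j = O(1)$ where Gaussian-type concentration fails. At such values the $\sqrt{p_j}$ factor in the definition of $\mu^\sharp$ is essential: it supplies a multiplicative budget of order $1/\sqrt{p_j}$ beyond the mean, comfortably absorbing the Poisson-type fluctuations of $N_j$ that would otherwise dominate. The number of relevant levels $k$ is only $O(\log n)$ thanks to the exponential tail, so the union bound over $k$ costs only a $\log n$ factor and the final failure probability is easily made $o(1)$.
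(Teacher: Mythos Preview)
Your overall plan matches the paper's. For Part~(1) the paper uses Cauchy--Schwarz on $\sum_k e^{\ep k}\sqrt{p_k}$ while you use the direct pointwise bound $\sqrt{p_j}\le\sqrt{C}\,e^{-cj/2}$; both are fine. For Part~(2), both arguments introduce a cutoff $k_n=O(\log n)$ beyond which no samples land, control each level below via Chernoff (when $np_k$ is large) or Markov (when $np_k$ is small), and close with a union bound over the $O(\log n)$ relevant values of $k$. The paper phrases this through the point masses $N_k=|\{i:D_i=k\}|$, showing $N_k\le\tfrac12 n\sqrt{p_k}$ for $k_0\le k\le k_n$; you work with the tail sums $S_k$ instead. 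These are interchangeable formulations of the same argument.

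There is, however, a concrete error in your bulk estimate. You assert that for $k\le k_0$ one has $\tfrac{2n}{3}\sum_{j\ge k}\mu^\sharp(j)\ge(1+o(1))\E S_k$, crediting the factor $1/2$ in the definition of $\mu^\sharp$. This is false: with $a:=\sum_{j<k}p_j$ one has $\sum_{j\ge k}\mu^\sharp(j)=1-a/(2Z)$ and $Z\in[1/2,1)$, so at best $\sum_{j\ge k}\mu^\sharp(j)\ge 1-a=\sum_{j\ge k}p_j$, not the $\tfrac32(1-a)$ your reduction demands. The factor $1/2$ shrinks the \emph{lower} tail of $\mu^\sharp$ (the right direction for domination) but does not inflate the upper tail enough to absorb the $3/2$ blowup from your crude worst-case bound $\sum_{j\ge k}p_j^\Delta\le 3S_k/(2n)$. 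Concretely, if $p_1>0$ is small the adversary can delete every sample equal to~$1$, making the empirical tail at $k=2$ equal to $1$, which already exceeds $\sum_{j\ge 2}\mu^\sharp(j)=1-p_1/(2Z)<1$. The paper's own proof treats only $k\ge k_0$ explicitly and is silent on the range $k<k_0$, so your sketch is exposing rather than creating this gap; to close it one would need a lower bound on the empirical mass \emph{below} $k$ after removal, not merely an upper bound on the tail.
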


\begin{remark}
	The i.i.d $D_i $ in the second condition of Lemma \ref{lem:aug} can be viewed as a degree sequence of $G \sim \mathcal{G}(n,\mu)$. Consider the exploration procedure starting from a single fixed vertex $v$, which, at each step, reveals a vertex adjacent to the current explored neighborhood and the half-edges incident to the new vertex. Then the second statement says that when the exploration process revealed $N \leq n/3$ vertices inside the local neighborhood of $v$, the empirical degree distribution of the $n-N$ unexplored vertices is stochastically dominated by $\mu^\sharp$, with high probability.
\end{remark}

Using the above properties of augmented distributions, we develop a coupling argument to dominate $N(v,L) \subset G$ by a Galton-Watson type branching process. To this end, we first take account of the effect of emerging cycles in $N(v,L)$.

For a constant $\gamma>0$, let $\mathcal{A}(\gamma)$ be the event that $N(v,\gamma \log n)$ in $G$ contains at most one cycle   for all $ v\in G $. The following lemma shows that we typically have $\mathcal{A}(\gamma)$ for some constant $\gamma$.

\begin{lemma}\label{lem:1cyc}
	There exists $\gamma=\gamma(\mu)>0$ such that for $G\sim \mathcal{G}(n,\mu)$, $\P(G\in \mathcal{A(\gamma)})  = 1-o(1)$.
\end{lemma}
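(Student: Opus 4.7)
The plan is to reveal $N(v, L)$ for $L = \gamma \log n$ via the standard half-edge exploration of the configuration model and then count cycle-creating pairings directly, choosing $\gamma$ small enough that creating two or more cycles is highly unlikely.

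\textbf{Size of the explored region.} First I would control the total number $H_L$ of half-edges incident to vertices of $N(v, L)$. Since $\mu$ has an exponential tail, so does the size-biased distribution $\mu'$, so the Galton-Watson tree $\gw(\mu,\mu')_L$ has exponential moments and concentrates around its mean of order $b^L = n^{\gamma \log b}$, where $b = \E D(D-1)/\E D > 1$. Using Lemma \ref{lem:aug}(2) to dominate the offspring distribution at each generation of the exploration by $\mu^\sharp$ (which by Lemma \ref{lem:aug}(1) still has an exponential tail; the hypothesis $|\Delta|\le n/3$ holds comfortably since only $n^{o(1)}$ vertices are ever revealed), together with the standard bound $\max_i d_i = O(\log n)$ whp from the exponential tail, I would establish that there is a constant $\alpha = \alpha(\mu) > 0$ with
\[
\P\bigl(H_L \ge n^{\alpha\gamma}\bigr) = o(n^{-3})
\]
uniformly in $v$.

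\textbf{Counting cycles.} Conditional on the exploration history up to any step, the next half-edge pairing creates a cycle precisely when it matches one of the unpaired half-edges currently inside the already explored region, so its conditional cycle-probability is at most $H_L/(n\E D - 2H_L) = O(H_L/n)$ once $H_L = o(n)$. The exploration performs at most $H_L$ pairings in total, so a union bound over pairs of pairings gives
\[
\P\bigl(N(v,L)\text{ contains at least two cycles}\bigr) \le \binom{H_L}{2}\cdot O\bigl((H_L/n)^2\bigr) = O\bigl(H_L^4/n^2\bigr).
\]
Choosing $\gamma = \gamma(\mu)$ small enough that $4\alpha\gamma < 1$ makes the right-hand side $o(n^{-1})$, and a union bound over the $n$ choices of $v$ finishes the proof.

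\textbf{Main obstacle.} The delicate step is the size estimate: the configuration-model exploration is only approximately Galton-Watson because the pool of unpaired half-edges depletes and is biased by degree as the exploration proceeds. Handling this rigorously requires invoking Lemma \ref{lem:aug}(2) at every generation to re-dominate the offspring distribution by $\mu^\sharp$, and then applying an exponential-moment Chernoff bound to the total progeny of the dominating size-biased branching process. Everything downstream is a routine union-bound calculation.
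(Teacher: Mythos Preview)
Your proposal is correct and follows essentially the same approach as the paper's proof. Both arguments (i) dominate the BFS exploration by a Galton--Watson tree with the augmented offspring distribution via Lemma~\ref{lem:aug}, (ii) bound the total number of half-edges in $N(v,L)$ by a small power of $n$ using an exponential-moment/Chernoff argument, (iii) observe that each pairing creates a cycle with conditional probability $O(H_L/n)$, and (iv) conclude that the probability of two cycles is $O(H_L^4/n^2)=o(n^{-1})$, then take a union bound over $v$. The paper packages step (iii)--(iv) as a stochastic domination by a Binomial random variable and carries out the exponential-moment iteration $\E e^{\varepsilon Y_s/K^s}\le \E e^{\varepsilon Y_{s-1}/K^{s-1}}$ explicitly, but this is exactly the Chernoff bound you allude to.
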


This is a well-known property that holds true in general for various types of random graphs. Our proof of this lemma will be very similar to that of Lemma 2.1 in \cite{ls10}. However, it is more technical due to generality of the model and hence we postpone the proof to Section \ref{app:treeexcess}.

Fix a constant $\gamma_1>0$ satisfying the condition in Lemma \ref{lem:1cyc}, and let $\mathcal{A}=\mathcal{A}(\gamma_1)$ for convenience. In the following, we define two Galton-Watson type branching processes, which are used to stochastically dominate $N(v,\gamma_1\log n)$.

\begin{definition}[Galton-Watson-on-cycle process] Let $s, L$ be positive integers with $s\geq 2$, and let $\xi$ be a probability distribution on $\mathbb{N}$. We define the \textit{Galton-Watson-on-cycle process} (in short, \textit{GWC-process}), denoted by $\gwc(\xi; s)_L$, as follows:
	\begin{enumerate}
		\item Let $C$ be a cycle of length $s$, and distinguish one vertex as the root $\rho$.
		
		\item On $C$, we add $(s-1)$ independent $\gw(\xi)_L$ trees, each rooted at a vertex of $C$ except for $\rho$.
	\end{enumerate}
	The vertex $\rho$ is called the root of $\gwc(\xi; s)_L$.
\end{definition}

\begin{definition}[Edge-added Galton-Watson process]
	Let $l,s,L$ be positive integers with $s\geq 2$ and $l\leq L$, and let $\xi$ be a probability distribution on $\mathbb{N}$. We define  $\egw(\xi;l,s)_L$, the edge-added Galton-Watson process (in short, EGW-process) as follows:
	\begin{enumerate}
		\item Generate a $\gw(\xi)_L$ tree, conditioned on survival until depth $l$.
		
		\item At each vertex $v$ at depth $l$, add an independent $\gwc(\xi;s)_{L-l}$ process rooted at $v$. Here we preserve the existing subtrees from $v$.
	\end{enumerate}
	Let $\xi'$ be another probability measure on $\mathbb{N}$. Then $\egw(\xi,\xi';l,s)_L$ denotes the EGW-process where the root has degree distribution $\xi$, and all the descendants have $\xi'$. Here we also add $\gwc(\xi';s)_{L-l}$ in the second step of the definition.
\end{definition}

We now develop an argument showing that the local neighborhood $N(v,L)$ is dominated by a combined law of EGW-processes. In what follows, for two probability measures $\nu_1$ and $\nu_2$ on graphs, we say $\nu_1$ \textit{stochastically dominates} $\nu_2$ and write $\nu_1 \geq_{st} \nu_2$ if there exists a coupling between $S_1 \sim \nu_1$ and $S_2 \sim \nu_2$ such that $S_2 \subset S_1 $ in terms of isomorphic embeddings of graphs, i.e., there exists an injective graph homomorphism from $S_2$ into $S_1$.

Fix a vertex $v \in G$, and consider its local neighborhood $N(v,L_n)$ where $L_n= \gamma_1 \log n$ with $\gamma_1$  as in Lemma \ref{lem:1cyc}. For each $l,s$ with $s\geq 2$, we define the event $\mathcal{B}_{l,s}(v)$ to be the subevent of $ \mathcal{A}$ such  that in addition to $\mathcal{A}$, $N(v, L_n)$ forms a cycle of length $s$ at distance $l$ from $v$.

For the given degree distribution $\mu$, let $\mu'$ be its size-biased distribution, and $\widetilde{\mu} := \mu'_{[1, \infty)}$ denote the distribution $\mu'$ conditioned on being in the interval $[1, \infty)$. Let $\mu^\sharp$ and $\widetilde{\mu}^\sharp$ be the augmented distributions of $\mu$ and $\widetilde{\mu}$, respectively. Further, let $\eta$, $\eta_{l,s}$ and  $\eta_0$ denote the probability measures on rooted graphs describing the laws of $N(v, L_n)$, $\egw(\mu^\sharp, \widetilde{\mu}^\sharp;l,s)_{L_n}$ and $\gw(\mu^\sharp, \widetilde{\mu}^\sharp)_{L_n}$, respectively.

\begin{lemma}\label{lem:nbdcoupling}
	Under the above setting, for a fixed vertex $v\in G$ we have the following stochastic domination:
	\begin{equation*}
	\eta \one_{\mathcal{A}} \leq_{st} \sum_{l,s:s\geq 2} b_{s,l} \eta_{s,l} + b_0 \eta_0,
	\end{equation*}
	where $b_{l,s}=\P(\mathcal{B}_{l,s}(v))$, $b_0 = 1-\sum_{l,s} b_{s,l}$.
\end{lemma}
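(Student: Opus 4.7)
The plan is to realize $N(v, L_n)$ via a breadth-first exploration of the configuration model and to couple that exploration, one half-edge pairing at a time, with a rooted Galton-Watson type graph built from the augmented distributions. Starting at $v$, I first expose its degree $D_v \sim \mu$ and then, in BFS order, pair each unmatched half-edge with a uniformly chosen remaining one, recording at each step whether the pairing lands on a previously unseen vertex (extending the BFS tree) or on an already explored vertex or on a sibling unmatched half-edge (closing a cycle in $N(v,L_n)$). Because $\mu$ has an exponential tail and $L_n = \gamma_1\log n$ with $\gamma_1$ small, the total number of vertices revealed by depth $L_n$ is $o(n)$ \textsf{whp} by a standard Galton-Watson upper bound, and in particular stays below the $n/3$ threshold needed to invoke Lemma \ref{lem:aug}(2).

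For the degree coupling, Lemma \ref{lem:aug}(2) tells us that throughout the exploration, the empirical distribution of the degrees of still-unexplored vertices is stochastically dominated by $\mu^\sharp$. Since half-edge pairing size-biases the choice of the next revealed vertex, this translates into the statement that the number of forward half-edges (the children in the BFS tree) of each non-root vertex is stochastically dominated by a sample from $\widetilde{\mu}^\sharp$, while the degree of the root $v$ itself is dominated by $\mu^\sharp$. Whenever the dominating sample strictly exceeds the true child count, I would append purely fictitious fresh subtrees to the target graph; these cannot collide with anything in $N(v,L_n)$, so the construction always yields an injective graph embedding of $N(v,L_n)$ into the dominating rooted graph.

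I then do a case split based on $\mathcal A$. If no pairing during the BFS closes a cycle, then $N(v,L_n)$ is a tree and the above coupling embeds it into a sample from $\eta_0 = \gw(\mu^\sharp,\widetilde{\mu}^\sharp)_{L_n}$. If on the other hand exactly one pairing closes a cycle (the only other option under $\mathcal A$), with the cycle having length $s \geq 2$ at distance $l$ from $v$, then the exploration is still tree-like both before and after that pairing and children are still $\widetilde{\mu}^\sharp$-dominated, so $N(v,L_n)$ embeds into a sample from $\eta_{l,s} = \egw(\mu^\sharp,\widetilde{\mu}^\sharp;l,s)_{L_n}$: the depth-$l$ approach path from $v$ to the cycle enters the $\gw$ skeleton, and the continuation of the exploration from each of the remaining $s-1$ cycle vertices feeds into its $\gwc$ attachment. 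The events $\mathcal B_{l,s}(v)$ are disjoint with total probability $\sum_{l,s} b_{l,s}$, and the remaining part of $\mathcal A$ (no cycle) has probability at most $b_0 = 1 - \sum_{l,s} b_{l,s}$; patching the pointwise couplings together will yield the claimed $\eta\one_{\mathcal A}\leq_{st}\sum_{l,s:s\geq 2}b_{l,s}\eta_{l,s}+b_0\eta_0$. The main delicate point is to verify that the uniform domination of the empirical unexplored-degree distribution by $\mu^\sharp$ persists throughout the random BFS, which is exactly the content of Lemma \ref{lem:aug}(2); everything else is careful bookkeeping about the single allowed cycle and about the fictitious subtrees appended during the augmentation step.
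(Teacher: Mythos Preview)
Your proposal is correct and follows essentially the same approach as the paper's proof: a breadth-first exploration of the configuration model, step-by-step domination of the offspring counts via Lemma~\ref{lem:aug}(2), and a case split on whether the (at most one) cycle appears, coupling with $\gw(\mu^\sharp,\widetilde{\mu}^\sharp)_{L_n}$ in the acyclic case and with $\egw(\mu^\sharp,\widetilde{\mu}^\sharp;l,s)_{L_n}$ on $\mathcal{B}_{l,s}(v)$. The paper phrases the cycle step slightly differently---it records the exploration index $(T,I)$ at which the unique cycle closes, identifies the cycle's closest vertex $v(C)$ to $v$, and attaches a fresh $\gwc(\widetilde{\mu}^\sharp;|C|)_{L_n}$ at the corresponding vertex $w(C)$ of the dominating tree---but this is exactly your ``continuation of the exploration from each of the remaining $s-1$ cycle vertices feeds into its $\gwc$ attachment,'' and the subsequent passage to $\egw$ (which plants a $\gwc$ at \emph{every} depth-$l$ vertex, not just at $w(C)$) is a harmless further enlargement.
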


\begin{proof}
	
	We study $N(v,L_n)$ from an exploration procedure point of view, in terms of the breadth-first search algorithm. Initially before exploring anything, we have $n$ vertices with each of them having i.i.d. $\mu$ half-edges. The term ``explore" means that we match a pair of half-edges and form an edge between their endpoint vertices. For convenience, we initially impose an arbitrary ordering on all half-edges before exploring anything. We consider the following exploration procedure:
	\begin{itemize}
		\item We start from the single vertex $v$ and the half-edges adjacent to it.
		
		\item Suppose that we explored up to depth-$t$ neighborhood of $v$. Let $\partial N(v,t)$ denote the unmatched half-edges on the boundary of $N(v,t)$, and we explore the half-edges in $\partial N(v,t)$ one by one, respecting the aforementioned ordering. During the $(t,i)$-th exploration step for $1\leq i \leq |\partial N(v,t)|$,  the $i$-th half-edge in $\partial N(v,t)$ is paired with a uniformly random unexplored half-edge.

	\end{itemize}
	
	Let $N(v,t;i)$ denote the explored neighborhood until $(t,i)$-th exploration step.  Also, let $H_t = |\partial N(v,t)|$. During the $(t,i)$-th exploration step, the $i$-th half-edge of $\partial N(v,t)$ seeks for its uniformly random pair from the unexplored half-edges. Therefore, if we have yet explored fewer than $\frac{n}{3}$ vertices, then after pairing a half-edge, the number of newly added half-edges to $N(v,t;i)$ from $N(v,t;i-1)$ is stochastically dominated by $\widetilde{\mu}^\sharp$, due to Lemma \ref{lem:aug}. This implies that conditioned on the event that $N(v,t;i)$ does not contain any cycles, $N(v,t;i)$ is stochastically dominated by $\mathcal{T}_{t,i} $, where $\mathcal{T}_{t,i}$ is generated by adding new offsprings according to $\widetilde{\mu}^\sharp$ to $i$ vertices of depth $t$, to the Galton-Watson tree $\mathcal{T}_t \sim \gw(\mu^\sharp, \widetilde{\mu}^\sharp)_t$.
	
	Define $(T,I)$ to be the index of the exploration step when a cycle is formed. In other words, the $I$-th half-edge in $\partial N(v,T)$ is either paired to a $j$-th half-edge in $\partial N(v,T)$ for some $j>I$ or to one of the newly explored half-edges during the $(T,k)$-th exploration step for some $k<I$.
	Note that on the event $\mathcal{A}$, either the  unique $(T,I)$ exists or it does not exist up to exploring $N(v,L_n)$.
	
	Suppose that there exists unique valid $(T,I)$. Let $C$ be the cycle formed at this step and $v(C)$ be the vertex in $C$ that is closest to $v$. Up to the $(T,I-1)$-th exploration step, we can stochastically dominate $N(v,T;I-1)$ by $\mathcal{T}_{T,I-1}$ as mentioned above. Let $w(C)$ be the vertex in $\mathcal{T}_{T,I-1}$ corresponding to $v(C)$ via an isomorphic embedding of $N(v,T;I-1)$ into $\mathcal{T}_{T,I-1}$. At $(T,I)$-th exploration step, we add $\mathcal{S} \sim \gwc(\widetilde{\mu}^\sharp;|C|)_{L_n}$ at $w(C)$. Note that this GWC-process $\mathcal{S}$ can be coupled with $C$ and its descendants in $N(v,L_n)$ in the sense that each Galton-Watson subtree hanging to the cycle of $\mathcal{S}$ stochastically dominates the corresponding subtree in $N(v,L_n)$ hanging to $C$.
	
	Let $l$ denote the distance from $v$ to $v(C)$. Completing the rest of the exploration as discussed above, $N(v,L_n)$ is stochastically dominated by $\egw ( \mu^\sharp, \widetilde{\mu}^\sharp; l, |C|)_{L_n}$, given that there exists the unique valid $(T,I)$. This implies that on the event $\mathcal{A}$, $N(v,L_n)$ is stochastically dominated by a combined law of EGW-processes, and in this  combination, the probability mass of appearance of $\egw(\mu^\sharp, \widetilde{\mu}^\sharp;l,s)_{L_n}$ should be $b_{l,s}=\P(\mathcal{B}_{l,s})$. This concludes the proof of the claimed result.
\end{proof}

\subsection{Estimating the survival time}
Thanks to Lemma \ref{lem:nbdcoupling}, we now study the contact process on edge-added Galton-Watson processes. On such graphs, we first show that the expected survival time of the contact process is bounded by a constant when the infection rate is small enough, as an analog of Theorem \ref{thm:cp on gw}.

\begin{proposition}\label{prop:cp on egw}
	Let $l,s,L$ be any integers such that $s\geq 2$ and $L\geq l$. Let $R_{l,s,L}$ denote the first time when $\cp^\lambda (\mathcal{S};\one_{\rho})$ reaches at state $\zero$, where $\mathcal{S} \sim \egw(\mu^\sharp, \widetilde{\mu}^\sharp;l,s)_L$ rooted at $\rho$. Then there exist constants $C, \lambda_0>0$ depending only on $\mu$ such that for all $\lambda \leq \lambda_0$, $s, l$ and $ L$, we have	$\E R_{l,s,L} \leq C$.
	
\end{proposition}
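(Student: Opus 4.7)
The plan is to adapt the inductive strategy of Lemma \ref{lem:cp on gw} to the EGW process by treating the cycle as a separate base case. Consider the root-added contact process on $\mathcal{S}$ together with its modified variant ignoring recoveries at $\rho$ whenever there is any other infected vertex; by Lemma \ref{lem:graph rep2} it suffices to bound the expected extinction time $\E[\widetilde{S}_{l,s,L}]$ of this modified process, which dominates $\E[R_{l,s,L}]$.

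For the inductive step $l \geq 1$, the root lies off the cycle, and its $D \sim \mu^\sharp$ children in the underlying GW tree are roots of \emph{independent} $\egw(\widetilde{\mu}^\sharp, \widetilde{\mu}^\sharp; l-1, s)_{L-1}$ subtrees (the cycle in each subtree having moved one depth closer to the new root). The proof of Lemma \ref{lem:cp on gw} then carries through verbatim: relating the stationary probability of the modified process at $\zero$ to the product of the stationary probabilities of the root-added processes on the children's subtrees yields the recursion
\[
\E[\widetilde{S}_{l,s,L}] \;\le\; \E_{D\sim\mu^\sharp}\Bigl[\exp\bigl(\lambda \cdot \E[\widetilde{S}_{l-1,s,L-1}^{(\widetilde{\mu}^\sharp)}] \cdot D\bigr)\Bigr].
\]
Combined with the exponential tails of $\mu^\sharp$ and $\widetilde{\mu}^\sharp$ (Lemma \ref{lem:aug}(1)), a standard induction on $l$ reduces the proposition to establishing the base case bound $\E[\widetilde{S}_{0,s,L}^{(\widetilde{\mu}^\sharp)}] \le C_0$ uniformly in $s$ and $L$.

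For the base case the root sits on the cycle of length $s$, and the subtrees of its two cycle neighbors $w_1, w_{s-1}$ are linked by the length-$(s-1)$ cycle path, so the product decomposition fails. The plan is to dominate the contact process on this GWC-rooted graph by the contact process on a Galton-Watson-type tree $\widetilde{\mathcal{S}}$ obtained by unfolding the cycle at $\rho$: namely $\rho$ has two independent ``caterpillar'' branches, each consisting of a length-$(s-1)$ path along which every vertex carries an independent copy of a $\gw(\widetilde{\mu}^\sharp)_L$ subtree. The offspring distribution at every vertex of $\widetilde{\mathcal{S}}$ is then dominated by $\widetilde{\mu}^\sharp$ shifted by at most $2$, which retains an exponential tail, so Lemma \ref{lem:cp on gw} applies and gives $\E[\widetilde{S}_{0,s,L}^{(\widetilde{\mu}^\sharp)}] \le e$ for small enough $\lambda$, uniformly in $s$ and $L$.

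The main obstacle is justifying the stochastic domination of the contact process on $\mathcal{S}$ by the contact process on $\widetilde{\mathcal{S}}$: heuristically the unfolded tree has more independent clocks and hence more infections, but a rigorous proof requires a careful coupling of graphical representations. A natural route is to lift each Poisson clock on $\mathcal{S}$ to one chosen copy of the corresponding vertex or edge in $\widetilde{\mathcal{S}}$, while the duplicated copies receive independent clocks; an induction on successive event times then shows that whenever $v \in \mathcal{S}$ is infected at time $t$, some lift of $v$ in $\widetilde{\mathcal{S}}$ is infected at time $t$ as well, so the extinction time on $\mathcal{S}$ is dominated by that on $\widetilde{\mathcal{S}}$. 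Once this coupling is in place, combining the inductive step with the base-case bound completes the proof, with $\lambda_0$ chosen small enough so that both bounds remain below $e$ for all $l, s, L$.
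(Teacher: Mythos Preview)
Your inductive step for $l\ge 1$ is essentially the paper's argument: the root lies off the cycle, its children root independent $\egw(\widetilde{\mu}^\sharp;l-1,s)$ subgraphs, and the product-chain identity from Lemma~\ref{lem:cp on gw} gives the recursion $\E[\widetilde S_{l,s,L}]\le \E_D\exp(\lambda D\,\E[S'_{l-1,s,L}])$. The divergence is entirely in the base case $l=0$, and there your argument has a genuine gap.

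The coupling you sketch does not work. Fix one lift for each vertex and edge of $\mathcal S$; give duplicated copies independent clocks. Your inductive hypothesis is ``$v$ infected in $\mathcal S$ $\Rightarrow$ \emph{some} lift of $v$ infected in $\widetilde{\mathcal S}$''. But when the clock on an edge $(v,u)$ of $\mathcal S$ rings, it is transported to the \emph{specific} chosen lift $(\tilde v,\tilde u)$ in $\widetilde{\mathcal S}$; the hypothesis does not guarantee that this particular $\tilde v$ is infected, only that some other lift of $v$ might be. Concretely, on a $3$-cycle $\rho\!-\!w_1\!-\!w_2\!-\!\rho$ with your tree $\rho\!-\!a_1\!-\!a_2$, $\rho\!-\!b_1\!-\!b_2$ and edge-lifts $\rho w_1\!\mapsto\!\rho a_1$, $w_1w_2\!\mapsto\!a_1a_2$, $w_2\rho\!\mapsto\!b_1\rho$: the clock sequence $\rho\!\to\!w_1$, recover $\rho$, $w_1\!\to\!w_2$, recover $w_1$, $w_2\!\to\!\rho$ re-infects $\rho$ in $\mathcal S$, but in $\widetilde{\mathcal S}$ only $a_2$ is left infected when the $b_1\rho$ clock rings, so $\rho$ is never re-infected. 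More fundamentally, paths in $\mathcal S$ can wrap around the cycle arbitrarily many times, while your finite tree of depth $s-1$ in each branch cannot accommodate them; a correct cover argument would need the infinite universal cover together with a further comparison between correlated and independent clocks, neither of which you provide.

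The paper avoids this entirely. At $l=0$ the root sits on the cycle with $D$ ordinary $\gw(\widetilde\mu^\sharp)_{L-1}$ subtrees and one attached $\gwc(\widetilde\mu^\sharp;s)_L$; the modified-process/product-chain argument gives
\[
\E[S_{0,s,L}\mid D]\;\le\;(1+\lambda\,\E S_{L-1})^{D}\,(1+2\lambda\,\E S_{s,L}),
\]
and the factor $\E S_{s,L}$ coming from the cycle is already bounded uniformly in $s,L$ by Lemma~\ref{lem:cp on gwc}, which runs its own induction on the cycle length $s$ (each step peels one cycle vertex off, leaving a $\gwc(\cdot;s-1)$). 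This keeps everything within the same recursive framework with no need for any graph unfolding.
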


\begin{remark}
	Since the coupling given in Lemma \ref{lem:nbdcoupling} only works until depth $L_n=\gamma_1 \log n$, we need to show that the contact process on edge-added Galton-Watson process does not go deeper than $\gamma_1 \log n $ with probability $1-o(n^{-1})$. Note that the $o(n^{-1})$ error is needed when applying a union bound over all vertices in order to translate our results to $G\sim \mathcal{G}(n,\mu)$. This will be done in the next section based on Theorem \ref{thm:decay of depth}.
\end{remark}

To establish Proposition \ref{prop:cp on egw},  we develop a recursive argument on both $s$ and $L$ to deduce an analog of Lemma \ref{lem:cp on gw} for GWC- and EGW-processes.  The idea will be similar to that of Lemma \ref{lem:cp on gw}, which is to utilize the notion of \textit{root-added contact process} (Definition \ref{def:modified cp}). We first extend the result of Lemma \ref{lem:cp on gw} to the case of GWC-processes: in the following lemma, we estimate the time that the \textit{root-added} contact process $\cp^\lambda_\rho (\mathcal{S}) $ reaches $\zero$, where $\mathcal{S} \sim \gwc (\widetilde{\mu}^\sharp;s)_L$ (note that the state space is now $\{0,1 \}^{\mathcal{S}\setminus \{\rho\}}$). Here, we fix the root $\rho$ of $\mathcal{S}$ to be the permanently infected parent. There is a slight difference from the previous root-added contact processes considered in Lemma \ref{lem:cp on gw}, since now the permanently infected parent has two children rather than one. We pick a child $v$ of $\rho$ and study $\cp^\lambda_\rho (\mathcal{S};\one_v)$.

\begin{lemma}\label{lem:cp on gwc}
	Let $s,L$ be any integers with $s\geq 2$ and let $\mathcal{S} \sim \gwc(\widetilde{\mu}^\sharp;s)_L$ be a GWC-process rooted at $\rho$, with $\widetilde{\mu}^\sharp$ as before. Let $v$ be any neighbor of $\rho$, and let $S_{s,L}$ denote the first time when $\cp^\lambda_{\rho}(\mathcal{S}; \one_v)$ reaches at state $\zero$. Then there exists a constant $ \lambda_0 >0$ depending only on $\mu$ such that for any $\lambda \leq \lambda_0$ and $s, L$, $\E S_{s, L }\leq 2e$.
\end{lemma}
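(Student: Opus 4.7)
The plan is to extend the modified-process argument of Lemma \ref{lem:cp on gw} by adding a second induction on the cycle length $s$. First, introduce the coupled process $(\widetilde X_t)$ that agrees with $\cp^\lambda_\rho(\mathcal{S}; \one_v)$ except that any recovery attempt at $v = v_1$ is ignored whenever the current state contains an infected vertex other than $v$ itself. By Lemma \ref{lem:graph rep2}, the first hitting time $\widetilde S_{s, L}$ of $\zero$ under $\widetilde X_t$ stochastically dominates $S_{s, L}$, so it suffices to bound $\E \widetilde S_{s, L}$.

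From state $\one_v$, the chain $\widetilde X_t$ leaves through one of four clock events: (A) $v$ heals (rate $1$); (B) $v$ infects one of its $D_v \sim \widetilde{\mu}^\sharp$ subtree children (total rate $\lambda D_v$); (C) $v$ infects its cycle neighbor $v_2$ (rate $\lambda$); or (D) the permanent $\rho$ infects the opposite cycle neighbor $v_{s-1}$ (rate $\lambda$). Event (A) terminates the excursion; after any of (B), (C), (D), vertex $v$ becomes effectively permanent and the dynamics until the next visit to $\one_v$ split into two independent components---the root-added CP on the Galton-Watson subtree $\mathcal{T}_v$ with $v$ permanent, and the CP on the remaining arc $\{v_2, \ldots, v_{s-1}\}$ together with its attached subtrees, in which both $v$ and $\rho$ serve as permanent sources. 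The key structural observation is that merging $v$ and $\rho$ into a single permanent super-root turns this ``rest'' process into precisely the root-added CP on a $\gwc(\widetilde{\mu}^\sharp; s-1)_L$. Running the geometric-series and stationary-distribution bookkeeping as in \eqref{eq:geometric trial cp}--\eqref{eq:cp tree rec} for the product chain on the subtree at $v$ and the rest, and factoring the product-chain stationary probability of the ``both-$\zero$'' state, yields the identity
\begin{equation*}
\E \widetilde S_{s, L} \;=\; \E_{D_v \sim \widetilde{\mu}^\sharp}\!\bigl[(1 + \lambda \E S_{L-1})^{D_v}\bigr]\,\cdot\,\bigl(1 + 2\lambda\, \E S_{s-1, L}\bigr),
\end{equation*}
where $S_{L-1}$ refers to the Lemma \ref{lem:cp on gw} quantity on $\gw(\widetilde{\mu}^\sharp)_{L-1}$.

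The induction runs on $s$. For the base case $s = 2$, the cycle is a multi-edge between $\rho$ and $v$, so events (C) and (D) are absent (the only non-heal clocks at $v$ are the subtree infections), and the problem reduces directly to Lemma \ref{lem:cp on gw}, giving $\E S_{2, L} \le e \le 2e$. For the inductive step, substituting $\E S_{L-1} \le e$ from Lemma \ref{lem:cp on gw} and $\E S_{s-1, L} \le 2e$ from the inductive hypothesis, and using the exponential-moment bound on $\widetilde{\mu}^\sharp$ from Lemma \ref{lem:aug}(1) together with Jensen's inequality (exactly as at the end of the proof of Lemma \ref{lem:cp on gw}), the right-hand side of the display above is bounded by $M^{\lambda e / c}(1 + 4e\lambda)$, which can be made $\le 2e$ by choosing $\lambda_0$ small enough. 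The main obstacle is that the factor $1 + 2\lambda\E S_{s-1, L}$---reflecting the fact that the (super-)root has two cycle-neighbors rather than one---forces the target constant up from $e$ (as in Lemma \ref{lem:cp on gw}) to $2e$; tuning $\lambda_0$ absorbs this factor and closes the induction uniformly in both $s$ and $L$.
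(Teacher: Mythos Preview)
Your proof is correct and follows essentially the same route as the paper's own argument: the same modified process $(\widetilde X_t)$ with $v$ frozen infected, the same decomposition into the $\gw(\widetilde\mu^\sharp)_{L-1}$ subtrees hanging at $v$ and the remaining arc, the identification of the arc (with $\rho$ and $v$ merged) as a $\gwc(\widetilde\mu^\sharp;s-1)_L$, and the stationary-distribution product formula leading to the recursion $\E\widetilde S_{s,L}=(1+2\lambda\E S_{s-1,L})\,\E_D[(1+\lambda\E S_{L-1})^{D}]$. The paper lumps your events (B), (C), (D) into a single ``$\{\rho,v\}$ infects a random neighbor $U$'' step and phrases the recursion as an inequality, but the content is identical, including the base case $s=2$ handled via Lemma~\ref{lem:cp on gw} and the closing choice of $\lambda_0$ so that $e(1+4e\lambda)\le 2e$.
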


\begin{proof}
	Let us first study the case of $s\geq 3$. We will develop an inductive argument on $s$, similarly as in Lemma \ref{lem:cp on gw}. Let $(X_t) \sim \cp^\lambda_\rho (\mathcal{S};\one_v)$, and consider the modified version $(\widetilde{X}_t) \sim \widetilde{\cp}^\lambda_{\rho;v} (\mathcal{S};\one_v)$ of $(X_t)$, defined as
	\begin{enumerate}
		\item $(\widetilde{X}_t)$ is coupled with $(X_t)$ in the sense that they share the same locations of recoveries and infections. In particular, $\rho$ is infected in $(\widetilde{X}_t)$.
		
		\item In $(\widetilde{X}_t)$, the recovery at $v$ at time $s$ is valid if and only if $\widetilde{X}_s =  \one_v$. Otherwise, we ignore the recovery at $v$. In other words, when there exists an infected vertex other than $\rho$ and $v$, the recovery at $v$ is neglected.
	\end{enumerate}
	The modified process $\widetilde{\cp}^\lambda_{\rho;v} (\mathcal{S};\one_v)$ plays the same role as the $\widetilde{\cp}$-process introduced in the proof of Lemma \ref{lem:cp on gw}, which we now detail. Let $D\sim \widetilde{\mu}^\sharp$ to satisfy $\deg(v)=D+2$, and let $u_1 ,\ldots, u_{D}$ be the neighbors of $v$ which are not on the cycle of $\mathcal{S}$. Let $\mathcal{T}_{u_i}$ denote the subtrees branching from $u_i$, which has the law of i.i.d $\gw(\widetilde{\mu}^\sharp)_{L-1}$, and regard $v\in \mathcal{T}_{u_i}^+$ as the permanently infected parent of $u_i$. Further, call $ \mathcal{S}' = \mathcal{S}\setminus \cup_{i=1}^D \mathcal{T}_{u_i}$, and define $\cp^\lambda_{\rho, v}(\mathcal{S}')$ to be the contact process on $\mathcal{S}'$ in which $\rho$ and $v$ are set to be  infected permanently.
	As we run the process $(\widetilde{X}_t)$ from $t=0$,
	
	\begin{enumerate}
		\item [\textbf{A}.]  The second state of $\widetilde{X}_t$ is $\zero$ with probability $\frac{1}{1+ (D+2)\lambda}$. Here, $D+2$ comes from $D+1$ possible new infections from $v$, and one possible infection from $\rho$ to its child other than $v$. When this happens,  the expected waiting time until the transition to $\zero$ is $\frac{1}{1+(D+2)\lambda}$.
		
		\item [\textbf{B}.] Otherwise, $\{\rho,v \}$ infects a uniformly random neighbor $U$ before $v$ is healed, and $(\widetilde{X}_t)$ then can be regarded as a product chain of $\cp^\lambda_{\rho, v}(\mathcal{S}')$ and $\{\cp^\lambda_v (\mathcal{T}_{u_i}^+): i\in [D] \}$ with initial state $\one_U $, until $\widetilde{X}_t$ returns back to $\one_v$. Denote this product chain by $\cp^\otimes_{\rho;v}(\mathcal{S})$ (whose state space is $\{0,1 \}^{\mathcal{S}\setminus \{\rho,v \}}$). Here, $U$ can be thought of the first infected vertex besides $\{\rho,v\}$ in $\cp^\otimes_{\rho;v}(\mathcal{S};\zero) $.
	\end{enumerate}
	Let $\widetilde{S}_{s,L}$ be the first time that $\widetilde{\cp}^\lambda_{\rho;v} (\mathcal{S};\one_v)$ becomes $\zero$, and let $S^\otimes$ denote the first time that $\cp^\otimes_{\rho; v}(\mathcal{S};\one_U)$ reaches all-healthy state except $\rho, v$. Then, similarly as in Lemma \ref{lem:cp on gw}, the above reasoning implies that
	\begin{equation}\label{eq:geometric trial dcp}
	\begin{split}
	\E \left[\left. \widetilde{S}_{s,L} \,\right|\, \mathcal{S} \right]
	&=
	\sum_{k=0}^\infty
	\left(\frac{(D+2)\lambda}{1+(D+2)\lambda} \right)^k
	\frac{1}{1+(D+2)\lambda} \times	\left[\frac{k+1}{1+(D+2)\lambda}
	+ k\,\E \left[\left. S^\otimes \, \right| \, \mathcal{S} \right] \right]\\
	&=1+ (D+2)\lambda \E  \left[\left. S^\otimes \, \right| \, \mathcal{S} \right].
	\end{split}
	\end{equation}
	Therefore, we have
	\begin{equation}\label{eq:gwc recursion}
	\E \left[\left. \widetilde{S}_{s,L} \,\right|\, D \right]
	=1+(D+2)\lambda \E \left[\left. S^\otimes \right| D \right].
	\end{equation}
	
	Now we take account of the stationary distributions of the above processes to obtain the conclusion. Let $\pi^\otimes, \,\pi'$ and $\pi_i$ be the stationary distributions of $\cp^\otimes_{\rho;v}(\mathcal{S}), \, \cp^\lambda_{\rho, v}(\mathcal{S}')$ and $\cp^\lambda_v (\mathcal{T}_{u_i}^+)$, respectively. Then clearly, $\pi^\otimes= (\otimes_{i=1}^D \pi_i)\otimes \pi' $. We can relate these objects with the running times similarly as (\ref{eq:stationary and running time 1}, \ref{eq:stationary and running time 2}), by
	\begin{equation}\label{eq:stationary and gwc}
	\begin{split}
	\pi^\otimes (\zero) &= \frac{1}{1+(D+2)\lambda \E[S^\otimes | \mathcal{S}] };\\
	\pi_i (\zero)&= \frac{1}{1+\lambda \E [S_{L-1}|\mathcal{T}_{u_i} ]} ;\\
	\pi'(\zero) &= \frac{1}{1+ 2\lambda \E[S_{s-1,L}| \mathcal{S}']},
	\end{split}
	\end{equation}
	where $S_{L-1}$  denotes the first time when $\cp^\lambda_v(\mathcal{T}_{u_i}^+; \one_{u_i})$ becomes $\zero$, and $S_{s-1,L}$ is the time it takes for $\cp^\lambda_{\rho,v}(\mathcal{S}'; \zero)$ to return to $\zero$ after the first infection besides $\{\rho,v\}$ occurs. Note that the existence of the infection other than $\{\rho,v\}$ in $\cp^\lambda_{\rho,v}(\mathcal{S}'; \zero)$ is guaranteed by the condition $s\geq 3$. Also, notice that $\mathcal{S}'$ can be regarded as  $\mathcal{S}''\sim \gwc(\widetilde{\mu}^\sharp;s-1)_L$, since the processes $\cp^\lambda_{\rho,v} (\mathcal{S}'; \one_w)$ (with $w$ being a neighbor of $\{\rho,v\}$ in $\mathcal{S}'$) and $\cp^\lambda_{\rho'}(\mathcal{S}'';\one_{v'}) $ for the root $\rho'$ and one of its neighbor $v'$ of $\mathcal{S}''$ share the same law. This implies that the notation $S_{s-1,L}$ in (\ref{eq:stationary and gwc}) matches with the definition of it given in the statement of the lemma.
	
	Therefore, combining (\ref{eq:gwc recursion}) and (\ref{eq:stationary and gwc}) gives that
	\begin{equation}\label{eq:gwc recur ineq}
	\E \left[S_{s,L} | D \right] \leq \E \left[\left. \widetilde{S}_{s,L} \right| D \right]
	\leq
	\left(1+ 2\lambda \E [S_{s-1,L}] \right) \left(1+\lambda \E [S_{L-1}] \right)^D.
	\end{equation}
	Since we already have a bound for $\E[S_{L-1}]$ due to Lemma \ref{lem:cp on gw}, we deduce the desired result by manipulating (\ref{eq:gwc recur ineq}),  as in the final step of the proof of Lemma \ref{lem:cp on gw}. Namely, for any $\lambda\leq \min\{ \lambda_0, (4e)^{-1} \}$ with $\lambda_0$ given as in Lemma \ref{lem:cp on gw}, (\ref{eq:gwc recur ineq}) gives us that
	\begin{equation*}
	\E[S_{s,L}] \leq e(1+2\lambda \E[S_{s-1,L}]),
	\end{equation*}
	and hence $\E[S_{s-1,L}] \leq 2e$ implies $\E[S_{s,L}]\leq 2e$.
	
	The case $s=2$ is simpler, since $\gwc(\widetilde{\mu}^\sharp;2)_L$ is the same as the law of $\mathcal{T}_L^+$ for $\mathcal{T}_L \sim \gw(\widetilde{\mu}^\sharp)_L$, except that the parent $\rho^+$ of $\rho$ in $\mathcal{T}_L^+$ is now connected with $\rho$ by two edges. From the contact process point of view, this means that the intensity of infection from $\rho^+$ to $\rho$ is $2\lambda$, and everything else is identical to the case of $\gw(\widetilde{\mu}^\sharp)_L$. Hence, the same proof as Lemma \ref{lem:cp on gw} can be replicated, and we obtain that there exists a constant $\lambda_0>0$ such that $\E[S_{2,L}]\leq 2e$ for all $\lambda\leq \lambda_0$. We leave the details of the proof to the reader.
\end{proof}

\begin{proof}[Proof of Proposition \ref{prop:cp on egw}] It can be proven by the same way as Lemma \ref{lem:cp on gwc}. For completeness, we present the proof in Appendix, Section \ref{app:cp on egw}.
\end{proof}

\subsection{Proof of Theorem \ref{thm:sub}}

Let $l,s,L$ be arbitrary integers with $s\geq 2$ and $L\geq l$, and consider an edge-added Galton-Watson process $\mathcal{S} \sim \egw(\mu^\sharp, \widetilde{\mu}^\sharp;l,s)_L $.

We can extend the result of Theorem \ref{thm:decay of depth} and Proposition \ref{prop:dcp on gw} to the case of edge-added Galton-Watson processes. The method will be the same as Proposition \ref{prop:cp on egw} and Lemma \ref{lem:cp on gwc}, appropriately adjusted to the current setting of \textit{delayed} contact process (Definition \ref{def:dcp}). We state the result in the following lemma, whose proof is deferred to Appendix (Section \ref{app:dcp on egw}), since it is similar to the previous proofs but more technical.

\begin{lemma}\label{lem:dcp on egw}
	Let $\mathcal{S} \sim \egw(\mu^\sharp, \widetilde{\mu}^\sharp;l,s)_L$, and $\nu^\theta_{\mathcal{S}}$ be the stationary distribution of $\ddp^{\lambda,\theta}_{\rho^+} (\mathcal {S}^+)$ on the space $\{0,1\}^\mathcal{S}$. Then there exist constants $C, \lambda_0>0$ depending only on $\mu$ such that for all $\lambda\leq \lambda_0$, we have $\E [\nu^\theta_{\mathcal{S}}(\zero)^{-1}] \leq 2$ for $\theta = C\lambda$.
\end{lemma}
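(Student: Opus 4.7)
The plan is to extend the delayed-contact-process technology of Proposition \ref{prop:dcp on gw} from ordinary Galton--Watson trees to EGW graphs, by the same two-step bootstrap that takes Lemma \ref{lem:cp on gw} to Proposition \ref{prop:cp on egw}. I would first treat the embedded cycle via an $s$-recursion, and then recurse through the tree above the cycle using the resulting GWC bound at the leaves.

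First I would prove a GWC analog of Proposition \ref{prop:dcp on gw}: for $\mathcal{S}\sim\gwc(\widetilde{\mu}^\sharp;s)_L$ with permanently infected root $\rho$ and a distinguished cycle-neighbor $v$, I expect the return time $S^\theta_{s,L}$ of $\ddp^{\lambda,\theta}_\rho(\mathcal{S};\one_v)$ to $\zero$ to satisfy $\E[S^\theta_{s,L}]\leq C_1/\theta$ for some $C_1=C_1(\mu)$, uniformly in $s$ and $L$. The recipe is to introduce the modified delayed process $\widetilde{\ddp}^{\lambda,\theta}_{\rho;v}$ that suppresses recoveries at $v$ whenever some vertex other than $\{\rho,v\}$ is infected, and to carry out the geometric-trial calculation of \eqref{eq:gw delayed recursion} in this setting. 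This should produce an identity of the form
\begin{equation*}
\E[\widetilde{S}^\theta_{s,L}\mid \mathcal{S}] = \frac{1}{\theta}\bigl(1 + (D+2)\lambda\,\E[S^{\otimes,\theta}\mid \mathcal{S}]\bigr),
\end{equation*}
where $D$ is the number of subtree children of $v$ and $S^{\otimes,\theta}$ is the return time of the product chain formed by the delayed CP on the contracted cycle $\mathcal{S}'\sim\gwc(\widetilde{\mu}^\sharp;s-1)_L$ (with $\{\rho,v\}$ permanently infected) together with the independent delayed root-added CPs on the $D$ tree subtrees. Converting return times to stationary probabilities at $\zero$ via the analog of \eqref{eq:stationary and delayed running time} and invoking the one-sided comparison $\nu^\otimes(\zero)\leq \widetilde{\nu}^\theta(\zero)$ from Proposition \ref{prop:dcp on gw} should yield the recursion
\begin{equation*}
\E[S^\theta_{s,L}] \leq \frac{1}{\theta}\bigl(1+2\lambda\,\E[S^\theta_{s-1,L}]\bigr)\bigl(1+\lambda\,\E[S^\theta_{L-1}]\bigr)^D.
\end{equation*}
With the bound $\E[S^\theta_{L-1}]\leq 2/\theta$ supplied by Proposition \ref{prop:dcp on gw} and the exponential tail of $\widetilde{\mu}^\sharp$ given by Lemma \ref{lem:aug}, a Jensen-type induction on $s$ closes this stage.

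Next I would assemble the EGW bound. For $\mathcal{S}\sim\egw(\mu^\sharp,\widetilde{\mu}^\sharp;l,s)_L$ with added parent $\rho^+$, I would run the tree recursion of Proposition \ref{prop:dcp on gw} from $\rho^+$ downward, noting that each depth-$l$ vertex $v$ contributes multiplicatively a factor consisting of (i) its tree-descendants piece, governed by Proposition \ref{prop:dcp on gw}, and (ii) its attached GWC piece, governed by the first stage; both factors are already bounded by $O(1/\theta)$. The exponential-tail induction identical to the concluding step of Proposition \ref{prop:dcp on gw} then delivers $\E[S^\theta_{l,s,L}]\leq C_2/\theta$ uniformly in $l,s,L$. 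A final application of \eqref{eq:stationary and delayed running time} with the root degree distribution $\mu^\sharp$, followed by taking expectation of the reciprocal, yields the desired $\E[\nu^\theta_{\mathcal{S}}(\zero)^{-1}]\leq 2$ after the universal constant $C$ in $\theta=C\lambda$ is chosen sufficiently small.

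The main obstacle is precisely the one already encountered in Proposition \ref{prop:dcp on gw}: the delayed stationary distribution fails to factorize over subtrees, because the depth function $r(x)=\max_i r(x_i)$ is bounded by but strictly smaller than $\sum_i r(x_i)$ in general. The one-sided inequality $\nu^\otimes(\zero)\leq \widetilde{\nu}^\theta(\zero)$ must be propagated cleanly through both the cycle contraction (decrementing $s$) and the tree recursion (decrementing $l$ and $L$), while keeping all multiplicative constants and the scaling $\theta=C\lambda$ uniform in $(l,s,L)$ so that the final constant depends only on $\mu$ and not on the geometry of $\mathcal{S}$.
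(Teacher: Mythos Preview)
Your proposal is correct and mirrors the paper's own argument: first a GWC analog of Proposition~\ref{prop:dcp on gw} is established via the modified process $\widetilde{\ddp}^{\lambda,\theta}_{\rho;v}$, the geometric-trial identity, and the one-sided stationary comparison, yielding exactly the recursion $\E[S^\theta_{s,L}\mid D]\le \theta^{-1}(1+\lambda\E[S^\theta_{L-1}])^D(1+2\lambda\E[S^\theta_{s-1,L}])$; then the EGW case is deduced by the tree recursion of Proposition~\ref{prop:cp on egw} with the GWC bound inserted at depth $l$. The one technical wrinkle you should make explicit in the GWC step is that the cycle contraction from $\widetilde{\mathcal{S}}$ (two frozen vertices $\rho,v$) to $\widetilde{\mathcal{S}}'\sim\gwc(\widetilde{\mu}^\sharp;s-1)_L$ shifts the depth function by at most one, i.e.\ $r(x;\widetilde{\mathcal{S}})-1\le r(x;\widetilde{\mathcal{S}}')$ with $r(\zero;\widetilde{\mathcal{S}})=1$ and $r(\zero;\widetilde{\mathcal{S}}')=0$; this is precisely what delivers $\nu_{\widetilde{\mathcal{S}}'}(\zero)\le \nu_{\widetilde{\mathcal{S}}}(\zero)$ and hence the factor $(1+2\lambda\E[S^\theta_{s-1,L}])$ in the recursion.
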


Based on Proposition \ref{prop:dcp on gw} and Lemma \ref{lem:dcp on egw}, we have an analog of Theorem \ref{thm:decay of depth} for EGW-processes.  Thus, we can complete the proof of Theorem \ref{thm: exp tail main}-(1), by combining the previous results to build up a coupling between the contact processes on local neighborhood $N(v,L)$ and on EGW-processes.

\begin{proof}[Proof of Theorem \ref{thm:sub}]
	Let $G\sim\mathcal{G}(n,\mu)$ for any large enough $n$ and let $\gamma_1>0$ be the constant satisfying Lemma \ref{lem:1cyc}. For each $v\in G$,   let  $\mathcal{A}_v$ be the event that  $N_v:=N(v,\gamma_1 \log n)$ in $G$ contains at most one cycle. Then, the proof of Lemma \ref{lem:1cyc} tells us $\P(\mathcal{A}_v) \geq 1-o(n^{-1})$. 
	
	Let $\lambda_0'$ be the minimum between the $\lambda_0$'s given by Proposition \ref{prop:cp on egw} and Lemma \ref{lem:dcp on egw}, $C$ be as in \ref{lem:dcp on egw}, and $\theta = C\lambda$ for $\lambda \leq \lambda_0'$. Further, let $(X_t)\sim \cp^\lambda(N_v;\one_v)$ and $H_v:=\max\{r(X_t): t\geq 0 \}$, where $r(X_t)$ denotes the maximal depth among the infected sites in $X_t$. Note that $r(X_t)$ stays $0$ after $X_t$ becomes $\zero$.
	
	Define the event $\mathcal{B}_v$ as
	$$\mathcal{B}_v := \{\P(H_v\geq h \,|\, N_v) \leq 2n^2 (C\lambda)^h,~\textnormal{for all }h \} .$$
	Following the same proof as Theorem \ref{thm:decay of depth} based on Lemma \ref{lem:dcp on egw}, we have $\P(\mathcal{B}_v) \geq 1-n^{-2}$, by dominating $N_v$ by the EGW-processes as Lemma \ref{lem:nbdcoupling}.  Assume that $(C\lambda_0')^{\gamma_1\log n} \leq n^{-4}$ by making $\lambda_0'$ smaller if needed. Then, the event $\mathcal{C}_v$ given by
	$$\mathcal{C}_v= \mathcal{C}_v(G) := \{H_v<\gamma_1 \log n \},$$
	satisfies $\P(\mathcal{C}_v | \mathcal{A}_v \cap \mathcal{B}_v ) = O(n^{-2})$. Note that $\mathcal{A}_v \cap \mathcal{B}_v$ is an event over the random graph $G$, while $\mathcal{C}_v$ is an event over the contact process $(X_t)$ given the graph $G$. By the aforementioned coupling of $N_v$ and the EGW-process, Proposition \ref{prop:cp on egw} gives that
	\begin{equation}\label{eq:TvonABC}
	\E [T_v \,|\, G\in\mathcal{A}_v \cap \mathcal{B}_v, \,  \mathcal{C}_v] \leq B,
	\end{equation}
	for some constant $B=B(\mu)>0$.
	
	Define the events 
	$$\mathcal{E}_1 :=\cap_{v\in G} (\mathcal{A}_v \cap \mathcal{B}_v ), \quad \textnormal{and} \quad \mathcal{E}_2(G) := \cap_{v\in G} \mathcal{C}_v. $$
	Then, the above discussion shows that $\P(G\in \mathcal{E}_1)=1-o(1)$ and $\P((X_t) \in \mathcal{E}_2(G) | G\in \mathcal{E}_1 )  = 1-o(1)$,  and hence \eqref{eq:TvonABC} holds the same given $\mathcal{E}_1$ and $\mathcal{E}_2(G)$, namely,
	\begin{equation*}
	\E [T_v \,|\, G\in \mathcal{E}_1,\, \mathcal{E}_2(G)] \leq B,
	\end{equation*}
	under a possible modification of $B$ if needed. Therefore, by linearity of expectation, summing the above over all $v\in G$ gives the conclusion.
\end{proof}

\begin{proof}[Proof of Corollary \ref{cor: ER main}-(1)]  The statement follows immediately from the contiguity of $\mathcal{G}_{\textsf{cf}}(n,\textnormal{Pois}(d))$ and $\mathcal{G}_{n,d/n}$ (\cite{k06}, Theorem 1.1). To be precise,  for any subset $A_n$ of graphs with $n$ vertices,
	$$\P_{G\sim \mathcal{G}_{\textsf{cf}}(n,\mu)} (G\in A_n) \rightarrow 0 \quad \textnormal{implies}\quad \P_{G\sim \mathcal{G}_{n,d/n}} (G\in A_n) \rightarrow 0,$$
	where $\mu = \textnormal{Pois}(d)$. Since the statement is true \textsf{whp} for $G\sim \mathcal{G}_{\textsf{cf}}(n,\mu)$, the configuration model,  it is also true \textsf{whp} for  $G\sim\mathcal{G}_{n,d/n}$. 
\end{proof}

\section{Long survival in random graphs: Proof of Theorem \ref{thm: exp tail main}, part 2}\label{sec:proof:thm:ex}
\subsection{A structural lemma}
Our main tool to prove long survival time is the following structural lemma whose proof is deferred to Section \ref{sec:structure}. As mentioned in Section \ref{subsec:maintec}, we show that the random graph $G_n$ contains a large \textit{$(\alpha, R)$-embedded expander}. Once some subset of this expander is infected, it is likely to spread the infection over its $R$-neighborhood whose size more than doubles the original subset. We define an embedded expander as follows.

\begin{definition}[Embedded expander]
	For two positive numbers $\alpha$ and $R$, we say that a subset of vertices $W_0$ is an \textit{$(\alpha, R)$-embedded expander} of $G_n$ if  for every subset $A\subset W_0$ with $|A|\le \alpha |W_0|$, we have
	\begin{equation}\label{eq:expander}
	|N(A, R)\cap W_0|\ge  2|A|.
	\end{equation}
	where $N(A, R)$ is the collection of all vertices in $G_n$ of distance at most $R$ from $A$.
\end{definition}
The following lemma concerns the existence of such an \textit{$(\alpha, R)$-embedded expander} in the random graph $G_n$.
\begin{lemma} \label{lm:structure}
	Suppose that $\mu$ satisfies (\ref{eq:condition on mu}) and there exists some constant $c>0$ such that $\E_{D\sim \mu} e^{cD}<\infty$. Let $G\sim \mathcal{G}(n,\mu)$. There exist positive constants $\alpha, \beta, R, j$ such that the following holds \textsf{whp}. There exist a subgraph $\bar G_n$ of $G_n$ whose maximal degree is at most $2j$ and an $(\alpha, R)$-embedded expander $W_0$ of $\bar G_n$ with $|W_0|\ge \beta n $.  
\end{lemma}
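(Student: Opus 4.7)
The plan is to truncate the degree sequence so that the subgraph $\bar{G}_n$ has bounded maximum degree yet remains supercritical, to take $W_0$ as the ``moderate-degree'' vertices in the giant component of $\bar{G}_n$, and to prove the expansion property via the locally tree-like structure of the configuration model combined with a first moment / union bound.

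First I would fix an integer $j$ large enough that: (i) the truncated distribution $\mu_j := \mu \wedge 2j$ still satisfies $\E_{D\sim \mu_j} D(D-2) > 0$ (possible because $\mu$ has finite second moment, so moments of $\mu_j$ converge to those of $\mu$ as $j\to\infty$), making the configuration model on $\mu_j$ supercritical; (ii) the size-biased distribution $\mu_j'$ has mean $b_j$ bounded below by some fixed $b_\ast > 1$; and (iii) $p_j := \P_\mu(D \in [j, 2j]) > 0$. Then I would construct $\bar{G}_n \subset G_n$ by a cut-off line type procedure in the spirit of \cite{k06}: assign each half-edge in the configuration model realization of $G_n$ an independent $\mathrm{Uniform}[0,1]$ label, and at each vertex $v$ with $d_v > 2j$ discard all but its $2j$ half-edges of smallest label, retaining an edge of $G_n$ iff both its half-edges survive. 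Thus $\bar{G}_n$ is a subgraph of $G_n$ with maximum degree at most $2j$, and conditionally on the surviving degree sequence it is distributed as a configuration model on that sequence. I would then let $W_0$ be the set of vertices in the giant component of $\bar{G}_n$ whose $\bar{G}_n$-degree lies in $[j, 2j]$; supercriticality of $\mu_j$ together with (iii) gives $|W_0| \geq \beta n$ whp for some $\beta>0$, by standard configuration-model law-of-large-numbers estimates.

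The main obstacle, and the heart of the argument, is the expansion property $|N(A, R) \cap W_0| \geq 2|A|$ uniformly in $A \subset W_0$ with $|A| \leq \alpha |W_0|$. The local weak limit of $\bar{G}_n$ rooted at a vertex of $W_0$ is a Galton-Watson tree with offspring mean $b_j \geq b_\ast > 1$, so the $R$-ball around a single $W_0$-vertex has, in expectation, at least of order $b_j^R$ vertices, a $p_j$-fraction of which fall in $W_0$. Choosing $R$ large enough that $p_j \cdot b_j^R \geq 4$, I would bound the probability that a fixed $A$ is \emph{bad} (i.e., violates the expansion) by running a breadth-first exploration of $\bar{G}_n$ from $A$ in the configuration model and applying martingale concentration to the number of revealed $W_0$-vertices, obtaining a bound that decays exponentially in $|A|$. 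A union bound over size-$k$ subsets via $\binom{n}{k} \leq (en/k)^k$ then yields the uniform statement, provided $\alpha$ is chosen small enough that the entropy $k\log(en/k)$ is absorbed by the per-set failure exponent. The delicate point is the overlap between $R$-balls centered at different vertices of $A$: this I would handle by first passing to a maximal $2R$-spaced subset $A' \subset A$, whose size is at least $|A|/(2j)^{2R} = \Theta(|A|)$ since $j$ and $R$ are fixed constants, and then applying the exploration argument to the disjoint balls centered at $A'$.
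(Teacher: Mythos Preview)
Your proposal has a genuine gap at the expansion step. Taking $W_0$ to be \emph{all} moderate-degree vertices in the giant of $\bar G_n$ cannot yield an $(\alpha,R)$-embedded expander: for each fixed $v\in W_0$, the event $N(v,R)\cap W_0=\{v\}$ occurs with probability bounded below by a positive constant (roughly $(1-p_j')^{\,jb_j^{R-1}}\sim e^{-C}$ once $R$ is tuned so that the expected count $p_j'\cdot jb_j^{R-1}$ is a fixed constant $C$). Hence a constant fraction of $W_0$ consists of vertices that already violate the expander condition at $|A|=1$, and no union bound over singletons can succeed since the per-vertex failure probability does not decay with $n$. Enlarging $R$ until $C\gg\log n$ is forbidden because $R$ must be a constant.

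Your overlap-handling device also fails quantitatively. Passing to a maximal $2R$-spaced subset $A'\subset A$ only guarantees $|A'|\ge |A|/(2j)^{2R}$, while the disjoint balls around $A'$ can contribute at most $|A'|\cdot(2j)^R$ vertices in total. Thus the best possible yield from $A'$ is $|A|/(2j)^R$, which cannot reach $2|A|$. The loss from spacing strictly dominates the gain from ball growth.

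The paper resolves both issues by an extra layer of construction that you are missing. After preprocessing to get $\bar G_n$, it explores the $R$-balls around the moderate-degree set $W$, \emph{contracts} each ball to a single super-vertex to form an auxiliary graph $G_n'$, and then runs a cut-off line core-finding procedure on $G_n'$ to extract a subset $W_0'$ in which \emph{every} super-vertex has high internal degree (at least $\theta M/20$, with $\theta M\ge 100$). The expander property is then proved for $W_0'$ at radius $1$ in $G_n'$ by a direct union bound (Lemma~\ref{lm:w0:expander}); this succeeds down to $|A|=1$ precisely because the minimum degree $s\ge 5$ in the core makes the per-set failure probability $O(n^{-(s-4)})$. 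Finally $W_0$ is defined as the set of centers of the balls in $W_0'$, which is then an $(\alpha,2R+1)$-expander in $\bar G_n$. The core extraction is exactly what prunes the ``isolated'' moderate-degree vertices that break your argument, and the contraction is what sidesteps the ball-overlap accounting.
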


\subsection{Proof  of Theorem \ref{thm: exp tail main}, part 2}
We first make a simple observation.

\begin{lemma}\label{lm:pathinfection:extail} Let $R$ be a positive integer constant. Consider the contact process $Y_t$ with infection rate $\lambda$ on a path of length at most $R$ connecting two vertices $v$ and $u$. Then there exist positive constants $C$ and $\lambda_0$ depending only on $R$ such that for all $\lambda\ge\lambda_0$, we have
	\begin{equation*}
	\P\left (u\in Y_{t+C}  \bigg| v\in Y_{t}\right ) \ge \frac{3}{4}.\nonumber
	\end{equation*}
\end{lemma}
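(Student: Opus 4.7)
The plan is to give a direct graphical-representation argument exploiting the fact that, for large $\lambda$, infection arrows along any edge are abundant in any fixed time window while recovery marks remain sparse. By the Markov property at time $t$ we may take $t = 0$, and by monotonicity of the contact process (Lemma \ref{lem:graph rep2}) we may further assume that the initial state is exactly $\one_v$. Write the path as $v = v_0, v_1, \dots, v_\ell = u$ with $\ell \le R$.

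The core step is to exhibit an explicit good event $\mathcal{G}$ in the graphical representation on which $u$ is certainly infected at time $C$. I would partition $[0, C]$ into $\ell$ equal subintervals $I_i := [(i-1)C/\ell,\, iC/\ell]$ and let $\mathcal{G}$ be the intersection of the following two events: (a) no recovery mark occurs at any of $v_0, v_1, \dots, v_\ell$ during $[0, C]$; and (b) for every $i = 1, \dots, \ell$, at least one infection arrow from $v_{i-1}$ to $v_i$ occurs during $I_i$. A short induction shows that on $\mathcal{G}$, each $v_i$ gets infected at the first such arrow time in $I_i$ (while $v_{i-1}$ is still infected thanks to (a), since the arrow time in $I_{i-1}$ precedes that in $I_i$) and then stays infected up through time $C$; in particular $u = v_\ell \in Y_C$.

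It remains to lower-bound $\P(\mathcal{G})$. Since (a) and (b) depend on disjoint independent Poisson clocks in the graphical representation, one computes directly
\[
\P(\mathcal{G}) \,\ge\, e^{-C(\ell+1)} \bigl(1 - e^{-\lambda C/\ell}\bigr)^\ell \,\ge\, e^{-C(R+1)} \bigl(1 - e^{-\lambda C/R}\bigr)^R.
\]
The only remaining task is to tune the constants: first choose $C = C(R) > 0$ small enough that $e^{-C(R+1)} \ge 7/8$, and then choose $\lambda_0 = \lambda_0(R, C)$ large enough that $(1 - e^{-\lambda_0 C/R})^R \ge 6/7$. For all $\lambda \ge \lambda_0$ this gives $\P(\mathcal{G}) \ge 3/4$, which is the desired bound.

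I do not anticipate a real obstacle; the whole argument is a standard large-$\lambda$ estimate. The only piece of ``art'' is to separate the roles of the two constants, namely $C$ (which must be small enough that recoveries along the path are unlikely in time $C$) and $\lambda_0$ (which must be large enough that an infection arrow is essentially certain in each subinterval of length $C/R$), so that both end up depending only on $R$.
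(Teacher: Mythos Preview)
Your proof is correct and takes a genuinely different, more elementary route than the paper. The paper's argument takes $C$ \emph{large} (rather than small) and proceeds in three stages: first it invokes a survival lemma from \cite{cd09} to keep the infection alive near $v$ throughout $[t,t+C]$; then it uses a path-infection lemma (also from \cite{cd09}) together with a binomial-trials argument over roughly $C/R$ disjoint time windows to guarantee that $u$ is infected at \emph{some} time in $[t,t+C+R]$; finally, it runs an additional survival argument on the edge $(u,u')$ for a neighbor $u'$ of $u$ to upgrade ``infected at some time'' to ``infected at the exact deterministic time $t+C$''. Your approach sidesteps all of this by taking $C$ small enough that, with probability close to $1$, no recovery occurs anywhere on the path during $[t,t+C]$, and then pushing the infection forward one edge per subinterval via a single explicit good event in the graphical representation. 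The paper's more elaborate structure is presumably chosen so that the proof of this lemma parallels the subexponential analogue (Lemma~\ref{lm:pathinfection:subex}), where $\lambda$ is fixed and possibly small and one genuinely needs survival-type input from \cite{cd09,berger2005spread}; for the present large-$\lambda$ statement your direct argument is cleaner and self-contained. (A minor quibble: the monotonicity in the initial condition that you invoke is not literally Lemma~\ref{lem:graph rep2}, which concerns ignoring recoveries, but it follows just as easily from the graphical representation.)
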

\begin{proof}
	Let $C$ be a sufficiently large constant compared to $R$. Let $\mathcal A$ be the event that the infection on $N(v, 1)$ survives in the entire time interval $\left [t, t+C\right ]$.
	
	By \cite[Lemma 1.1]{cd09}, for sufficiently large $\lambda$ compared to $C$,
	\begin{equation*}
	\P\left (\mathcal A\bigg| v\in Y_{t} \right ) \ge \frac{99}{100}.\nonumber
	\end{equation*}
	Since there is a path of length at most $R$ from $v$ to $u$, by \cite[Lemma 2.4]{cd09},
	\begin{equation*}
	\P\left (u\in Y_{t''}\text{ for some $t''\in [t', t'+R]$}\bigg| N(v, 1)\cap Y_{t'} \neq \emptyset\right ) \ge \frac{1}{e^{6R}},\nonumber
	\end{equation*}
	and so as $C$ is large compared to $R$, we have
	\begin{eqnarray}\label{key}
	\P\left (u\in Y_{t'}\text{ for some $t'\in [t, t+C+R]$}\bigg| v\in Y_{t}\right )   \ge\P\left (\Bin \left (\frac{C}{R}, \frac{1}{e^{6R}}\right )\ge 1\right )-\frac{1}{100}\ge \frac{98}{100} \nonumber.
	\end{eqnarray}
	Assume that $u\in Y_{t'}$ for some $t'\in [t, t+C+R]$. Fix a neighbor $u'$ of $u$. By  \cite[Lemma 1.1]{cd09} again, the contact process on the edge $(u, u')$ survives in the entire interval $[t', t' + 3C]$ with probability at least $\frac{99}{100}$. 
	Since $t+C+R\in [t', t' + 2C]$, there is at least one clock ring in $[t+C+R, t'+3C]$ and the last clock ring before time $t+3C$ is an infection clock from $u'$ to $u$ rather than the recovery clock at $u$,  with probability at least $\frac{99}{100}$. If $u'$ is already infected at that time, $u$ will be infected. Otherwise, $u$ has already been infected and remains infected. In either case, $u$ is infected at time $t+3C$ with probability at least $\frac{96}{100}\ge \frac{3}{4}$. By replacing $C$ by $C/3$, we complete the proof.
\end{proof}

Let $\alpha, \beta, j, R$, $\bar G_n$ and $W_0$ as in Lemma \ref{lm:structure}.  
It suffices to show that the contact process $(X_t)$ on $\bar G_n$ with all vertices infected initially survives for $e^{\Theta(n)}$-time with probability at least $1-e^{-\Omega(n)}$ over the contact process. For the rest of this proof, all the vertices, edges, paths, and balls are of $\bar G_n$ unless otherwise noted.

Let $X^{0}_{t}= X_{t}\cap W_0$ be the collection of infected vertices of $W_0$ at time $t$. We show that, thanks to the expander property of $W_0$, with very high probability, after some time $C$, the number of infected vertices in $W_0$ increases.

\begin{lemma}\label{lm:azuma:1}	Let $C$ and $\lambda_0$ be the constants in Lemma \ref{lm:pathinfection:extail}.	There exists a positive constant $C'$ depending only on $j$ and $R$ such that for all $\lambda\ge \lambda_0$ and for every integer $a\in (0,  \alpha \beta n]$,
	\begin{equation}\label{eq:path:expectation}
	\P\left (\left |X^{0}_{t+C}\right |\le \frac{5}{4}a\bigg| |X^{0}_{t}|= a\right )\le 2\exp\left (-\frac{a}{C'}\right ).
	\end{equation}
\end{lemma}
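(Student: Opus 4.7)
My plan is to use the embedded expander property of $W_0$ together with Lemma \ref{lm:pathinfection:extail} to show that the infected subset of $W_0$ grows in expectation by a factor close to $3/2$ over time $C$, and then upgrade this to exponential concentration via a chromatic decomposition of the dependency graph of witness paths.

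First I would condition on $X^0_t = A$ with $|A| = a$. Since $a \le \alpha\beta n \le \alpha|W_0|$, the $(\alpha, R)$-expander property gives $|B| \ge 2a$ for $B := N(A, R) \cap W_0$. For each $u \in B$, choose a witness path $P_u$ in $\bar G_n$ of length at most $R$ ending in $A$; for $u \in A$, take instead any edge of $\bar G_n$ incident to $u$ (such an edge exists, since otherwise the expander property would fail at $\{u\}$). Let $v_u := u$ if $u \in A$, and otherwise let $v_u$ be the endpoint of $P_u$ in $A$, so in all cases $v_u \in A \subset X^0_t$. Let $Y^u$ be the contact process on the subgraph $P_u$ started at time $t$ with only $v_u$ infected, coupled to the full process via the standard graphical representation. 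Monotonicity of this coupling in the edge set and initial condition gives $\one\{u \in Y^u_{t+C}\} \le \one\{u \in X^0_{t+C}\}$, while Lemma \ref{lm:pathinfection:extail} (or the single-edge survival bound \cite[Lemma 1.1]{cd09} invoked in its proof, for the case $u \in A$) yields $\P(u \in Y^u_{t+C}) \ge 3/4$ for $\lambda \ge \lambda_0$ and an appropriate constant $C$.

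Next I would form a dependency graph $\mathcal H$ on $B$ by connecting $u$ and $u'$ whenever $P_u \cap P_{u'} \ne \emptyset$. Because $\bar G_n$ has maximum degree at most $2j$ and each $|P_u| \le R+1$, every vertex of $\bar G_n$ has $(R+1)$-neighborhood of size $O((2j)^{R+1})$, so each vertex lies on the witness path of at most $O((2j)^{R+1})$ members of $B$; hence $\mathcal H$ has maximum degree bounded by a constant $\Delta_0 = \Delta_0(j, R)$, giving $\chi(\mathcal H) \le \chi := \Delta_0 + 1$. Fix a proper coloring $B = B_1 \sqcup \cdots \sqcup B_\chi$; within each class $B_i$ the witness paths are pairwise vertex-disjoint, so the Poisson clocks driving $\{Y^u : u \in B_i\}$ are disjoint, whence $\{\one\{u \in Y^u_{t+C}\} : u \in B_i\}$ is a family of independent Bernoullis of mean at least $3/4$.

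To conclude I combine per-color Chernoff bounds with a threshold trick to manage atypically small color classes, which is the subtle point—a naive union bound of per-color Chernoff estimates degenerates when some $|B_i|$ is small. Fix $\varepsilon = 1/56$ and $L := a/(4\chi)$, and let $\mathcal L := \{i : |B_i| \ge L\}$; the omitted classes cover at most $\chi L = a/4$ vertices, so $\sum_{i \in \mathcal L} |B_i| \ge 2a - a/4 = 7a/4$. Hoeffding's inequality gives, for each $i \in \mathcal L$,
$$\P\!\left(\sum_{u \in B_i} \one\{u \in Y^u_{t+C}\} \le (\tfrac{3}{4} - \varepsilon)|B_i|\right) \le \exp(-2\varepsilon^2 L).$$
Union-bounding over the at most $\chi$ classes in $\mathcal L$, on the complementary event
$$|X^0_{t+C}| \ge (\tfrac{3}{4} - \varepsilon) \sum_{i \in \mathcal L} |B_i| \ge \tfrac{41}{56} \cdot \tfrac{7a}{4} = \tfrac{41a}{32} > \tfrac{5a}{4},$$
yielding the bound $\le 2\exp(-a/C')$ for a constant $C' = C'(j, R)$ (the factor $2$ absorbing the trivial estimate for small $a$). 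The main obstacle is bounding $\chi(\mathcal H)$ uniformly in $n$, which is precisely what the maximum-degree cap on $\bar G_n$ from Lemma \ref{lm:structure} is designed to supply.
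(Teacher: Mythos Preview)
Your argument is correct and takes a genuinely different route from the paper. The paper lower-bounds $|X^0_{t+C}|$ by the size of a set $\mathcal X$ of vertices reachable via infection paths inside balls $B(v,R)$, $v\in X^0_t$, and then applies Azuma's inequality to the vertex-exposure martingale obtained by revealing the Poisson clocks on $G_{n,t}:=\bigcup_{v\in X^0_t}N(v,R)$ one vertex at a time; the degree cap on $\bar G_n$ enters only through the bound $|\mathcal X_j-\mathcal X_{j-1}|\le(2j)^{R+1}$ on the martingale increments, and the expectation bound $\mathcal X_0\ge 3a/2$ comes from Lemma~\ref{lm:pathinfection:extail} exactly as in your first step. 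Your approach instead fixes a single witness path $P_u$ per target and exploits the degree cap to bound the chromatic number of the path-intersection graph, so that within each color class the coupled single-path processes $Y^u$ are genuinely independent and Hoeffding applies directly. The paper's Azuma argument is shorter and avoids your threshold trick for small color classes; your decomposition makes the independence structure explicit and sidesteps any martingale machinery, at the cost of that extra bookkeeping. Your treatment of the case $u\in A$ (using an incident edge and the edge-survival bound) is in fact more explicit than the paper's, which silently folds this case into the same invocation of Lemma~\ref{lm:pathinfection:extail}.
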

\begin{proof} 	  We will use Azuma's inequality. Let $G_{n,t}$ be the induced subgraph of $\bar G_n$ on the set $\bigcup_{v\in X^{0}_t} N(v, R)$. Let $(\hat X_{t'})_{t'\in [t, t+C]}$ be the contact process on $G_{n,t}$ with $\hat X_t := X^{0}_t$ (so $(\hat X_{t'})$ only uses the infection and recovery clocks of vertices and edges inside $G_{n, t}$). Let $\hat X_{t'}^{0} = \hat X_{t'}\cap W_0$. Let $\mathcal X$ be the infected vertices $u$ of $\hat X_{t+C}^{0}$ such that there exists $v\in X^0_t$ and a directed path of infection on the graphical representation of $(\hat X_{t'})$ from $(v, t)$ to $(u, t+C)$ and the vertices of the path lie entirely in $B (v, R)$. We have
	$$\mX\subset \hat X_{t+C}^{0}\subset X^{0}_{t+C}.$$
	
	Since the maximal degree in $\bar G_n$ is at most $2j$, the number of vertices, denoted by $a'$, in $G_{n, t}$ is at most $a(2j)^{R+1}$.
	Enumerate the vertices in $G_{n, t}$ by $v_1, \dots, v_{a'}$. For each $i=0, 1, 2, \dots, a'$, let $\mathcal F_i$ be the $\sigma$-algebra generated by the randomness of the recovery clocks and infection clocks during time $(t, t+C]$ on the vertices $v_1, \dots, v_i$ and edges connecting them. Let
	$$\mX_i := \E\left (\left |\mX \right |\bigg| \mathcal F_i, X^{0}_{t}, |X^{0}_t|=a\right ).$$
	We have $\mX_{a'} = \mX$. By Azuma's inequality, we have for every $s>0$,
	\begin{equation}\label{eq:azuma:subex}
	\P\left (\left |\mX_{a'}-\mX_0\right |\ge s\right )\le 2\exp\left (-\frac{s^{2}}{2a'K^{2}}\right )
	\end{equation}
	where
	\begin{equation}\label{key}
	K := \max_{j} ||\mX_j-\mX_{j-1}||_{\infty} \le \left |B\left (v_{i+1}, R\right )\right |\le (2j)^{R+1} \nonumber.
	\end{equation}
	From Lemma \ref{lm:pathinfection:extail} and the expander properties of $W_0$ as in Lemma \ref{lm:structure}, we obtain
	\begin{equation}\label{eq:path:expectation:2}
	\mX_0 = \E\left (\left |\mX\right |\bigg| |X^{0}_{t}|= a \right ) \ge \frac{3}{4}\left |N(X^{0}_{t}, R)\cap W_0\right |\ge   \frac{3a}{2}.
	\end{equation}
	Thus, by \eqref{eq:azuma:subex} for $s = \frac{a}{4}$ and the fact that $\mX_{a'}\le \left |X^{0}_{t+C}\right |$,  we obtain \eqref{eq:path:expectation}.
\end{proof}

\begin{proof}[Proof of Theorem \ref{thm: exp tail main}-(2)]
	For the lower bound on survival time, initially, all vertices in $W_0$ are infected so $|X^0_0|\ge \beta n$. Let $t_1$ be the first time that $|X^{0}_{t_1}| = \alpha \beta n$. At time $t_2 = t_1+C$, we have $|X^{0}_{t_2}| \ge \frac{5}{4} \alpha \beta n$ with probability at least $1 - \frac{2}{m^{2}}$ where $m:= \exp\left (\frac{\alpha \beta n}{2C'}\right )$ by Lemma \ref{lm:azuma:1}. Let $t_3$ be the first time after $t_2$ that $|X^{0}_{t_3}| = \alpha \beta n$ again. Repeating this process $m$ times, we get that the contact process survives until time $m C$ with probability at least $1 - 2/m$ by the union bound, proving the lower bound for Theorem \ref{thm: exp tail main}-(2).
	
	As for the upper bound, observe that for any time $t$, the probability that the contact process dies out during the time interval $[t, t+1]$ is at least the probability that for each vertex $v$ in $G_n$, at least one of the infection clocks from a neighbor $u$ of $v$ to $v$ or the recovery clock at $v$ rings in $[t, t+1]$ and the last clock rings before time $t+1$ is the recovery clock at $v$. Thus, the probability that the process dies out during $[t, t+1]$ is at least $\prod_{v} \frac{c}{\deg (v)}$. By Cauchy-Schwartz inequality and the fact that \textsf{whp}, the total degrees in $G_n$ is $O(n)$, we have $\prod_{v} \frac{c}{\deg (v)}\ge e^{-c'n}$ for some small constants $c, c'$. Therefore, \textsf{whp}, the contact process dies out before time $e^{2c'n}$. 
\end{proof}

\begin{remark}\label{rmk:onevertex:ex} To prove the corresponding result (Remark \ref{rmk:onevertex}) for $\mu$ having an exponential tail, when initially, there is only one uniformly chosen vertex $v$ infected in $G_n$, observe that with positive probability over the choice of $v$, $v$ belongs to $W_0$. Thus, it suffices to condition on this event and show that with positive probability over the contact process, the process survives until time $e^{cn}$ for some constant $c$. Let $\lambda_{0, R}$, $C_R$ and $C'_R$ be the constants $\lambda_0$, $C$ and $C'$ corresponding to $R$ in Lemmas \ref{lm:pathinfection:extail} and \ref{lm:azuma:1}, respectively. By \eqref{eq:expander}, for any bounded number $k$, we have
	$$|N(v, kR)\cap W_0| \ge 2^{k}$$
	for sufficiently large $n$.
	
	We now show that for sufficiently large $\lambda$, at some time, there will be a lot of infected vertices in $W_0$. This will then allow to take the union bound of the tail probability occurring in Lemma \ref{lm:azuma:1}. Let $k$ be a sufficiently large constant. Since the number of vertices in $N(v, kR)$ is at most $(2j)^{kR+1}=O_{j, k, R}(1)$, there are $O_{j, k, R}(1)$ edges in $N(v, kR)$. Thus, there exist constants $\lambda_{j, k, R}, t_{j, k, R}$ such that for all $\lambda\ge \lambda_{j, k, R}$, the probability that each vertex in $N(v, kR)$ is infected before time $t_{j, k, R}$ and that there are no recovery clocks ring before time $t_0$ is at least $3/4$. Hence, with probability at least $3/4$, there exists $t_1\le t_{j, k, R}$ at which all vertices in $N(v, kR)$ are infected. This implies $|X^{0}_{t_1}|\ge 2^{k}\ge (5/4)^{k}$. 
	
	Conditioning on this event and applying Lemma \ref{lm:azuma:1}, we get that with probability at least 
	\begin{equation*}
	1-\sum_{i=k}^{\infty} 2\exp\left (-\frac{5^{i}}{C'_{R}4^{i}}\right ),
	\end{equation*}
	there exists a time $t\ge t_1$ at which $|X^{0}_t|\ge \alpha \beta n$. Since $k$ is a sufficiently large constant, this probability is at least $1/2$. Finally, conditioned on this event, the same argument as in the proof of Theorem \ref{thm: exp tail main}-(2) shows that starting from this $t$, the contact process survives until time $e^{-\Omega(n)}$ \textsf{whp}. Altogether, the contact process $(X_t)$ starting from $v$ survives until time $e^{-\Omega(n)}$ with probability at least $1/4-o(1)$ for all $\lambda\ge \max\{\lambda_{0, R}, \lambda_{j, k, R}\}$ as desired.
\end{remark}

\begin{remark}
	Corollary \ref{cor: ER main}-(2) follows from Theorem \ref{thm: exp tail main}-(2) in the exact same way as we deduced Corollary \ref{cor: ER main}-(1)  from Theorem \ref{thm: exp tail main}-(1).
\end{remark}

\section{Long survival in random graphs: Proof of Theorem \ref{thm: subexp tail main}} \label{sec:proof:thm:subex}

In this section, we prove Theorem \ref{thm: subexp tail main} following the same strategy as in the proof of Theorem \ref{thm: exp tail main}-(2). The following structural lemma is an analog of Lemma \ref{lm:structure} for subexponential distributions. 
\begin{lemma} \label{lm:structure:subex}
	Suppose that $\mu$ satisfies (\ref{eq:condition on mu}) and $\E_{D\sim \mu} e^{cD}=\infty$ for all $c>0$. Let $G\sim \mathcal{G}(n,\mu)$. For any $\delta>0$,  there exist $\alpha, \beta, j, R>0$ with $R\le \delta j$ such that the following holds \textsf{whp}. There exist a subgraph $\bar G_n$ of $G_n$ whose maximal degree is at most $2j$ and an $(\alpha, R)$-embedded expander $W_0$ of $\bar G_n$ with $|W_0|\ge \beta n $ and $\deg_{\bar G_n} w\ge j/2$ for all $w\in W_0$. 
\end{lemma}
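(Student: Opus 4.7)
The plan is to follow the construction of Lemma \ref{lm:structure} from Section \ref{sec:structure}, choosing the parameters so as to exploit the heavier-than-exponential tail of $\mu$ and drive the ratio $R/j$ below the prescribed $\delta$. The key numerical input replacing the exponential-tail estimates used there is the following elementary consequence of subexponentiality: since $\E e^{cD}=\infty$ for every $c>0$, for any $\eta>0$ there exist arbitrarily large integers $j$ with $\mu([j,2j])\ge e^{-\eta j}$, for otherwise $\sum_k e^{2\eta k}\mu(k)$ would converge. Consequently the size-biased distribution obeys $\mu'([j-1,2j-1])\ge j\,e^{-\eta j}/\E D$ along this subsequence, so with $\eta\ll\delta$ and $R=\lceil\delta j\rceil$ one obtains the crucial overshoot
\begin{equation*}
(j-1)^R\,\mu'([j-1,2j-1])\;\ge\; j^{\delta j/2}
\end{equation*}
for $j$ large, which will drive every subsequent estimate.

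Given $\delta>0$, I would fix such $\eta$ and $j$, set $R=\lceil\delta j\rceil$, and construct $\bar G_n$ by truncating each vertex of original degree exceeding $2j$ to a uniformly random subset of $2j$ of its half-edges before performing the configuration-model pairing, so that $\Delta(\bar G_n)\le 2j$. The second-moment assumption on $\mu$ makes $\E[D\,\mathbf{1}_{D>2j}]$ small, so for each vertex $u$ of original degree in $[j,2j]$ the expected number of edges lost during the truncation is $O(j\cdot\E[D\mathbf{1}_{D>2j}]/\E D)=o(j)$, and by Chebyshev a $1-o(1)$ fraction of such vertices retain at least $j/2$ edges in $\bar G_n$ \textsf{whp}; call this set $W_0^\ast$. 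I would then apply the cut-off line procedure (Definition \ref{def:cut-off line}) of Section \ref{sec:structure} to $W_0^\ast$, iteratively discarding any vertex whose $R$-ball in $\bar G_n$ contains too few surviving candidates. The overshoot estimate above, combined with a standard first/second-moment analysis on the configuration model, shows that the iteration stabilises on a set $W_0$ with $|W_0|\ge\beta n$ \textsf{whp} for some $\beta=\beta(\delta,\mu)>0$, while by construction every $w\in W_0$ has $\deg_{\bar G_n}(w)\ge j/2$ as well as the single-vertex expansion property $|N(\{w\},R)\cap W_0|\ge 2$.

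The $(\alpha,R)$-embedded expander property for larger $A\subset W_0$ then follows by the branching-plus-concentration argument of Lemma \ref{lm:structure}. For fixed $A$ of size $a\le\alpha|W_0|$, Lemma \ref{lem:aug} dominates the residual degree distribution by $\widetilde\mu^\sharp$ throughout the depth-$R$ exploration of $A$ (the total number of half-edges revealed being at most $\alpha\beta n\cdot(2j)^{\delta j}=o(n)$), so the number of depth-$R$ candidate neighbors of $A$ landing in $W_0$ is stochastically lower bounded by a Binomial of mean at least $a\cdot j^{\delta j/2}$. A Chernoff bound on this Binomial gives failure probability $\exp(-ca\cdot j^{\delta j/2})$ for each fixed $A$, and a union bound over the $\binom{|W_0|}{a}\le(en/a)^a$ subsets $A$ of size $a$ (and over $a\in[\,2\,,\alpha\beta n]$) closes the argument provided $j$ is first fixed large enough that $j^{\delta j/2}$ dominates the union-bound entropy $\log(e/(\alpha\beta))$; the troublesome case $a=1$ is taken care of directly by the cut-off line construction of $W_0$.

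The principal obstacle, distinguishing this from Lemma \ref{lm:structure}, is that $R$ now scales linearly with $j$ rather than being bounded, so both the local-tree approximation and the concentration step must hold uniformly at depth $\Theta(j)$. The truncation $\Delta(\bar G_n)\le 2j$ handles the former by keeping ball sizes at $(2j)^{\delta j}$ and hence Lemma \ref{lem:aug} applicable throughout the exploration, while the huge branching overshoot $j^{\delta j/2}$ handles the latter by producing Chernoff tails that absorb the $\log(1/\alpha)$-scale entropy cost. Once $\eta\ll\delta$, then $j$ large, and finally $\alpha$ small are chosen in that order, the rest of the argument mirrors the proof of Lemma \ref{lm:structure}.
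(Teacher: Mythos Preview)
Your proposal has the right high-level intuition—use subexponentiality to guarantee $\mathfrak u_j:=\mu([j,2j])\ge e^{-\eta j}$ along a subsequence, then choose $\eta\ll\delta$ so that the required exploration depth satisfies $R\le\delta j$. But the execution has two genuine gaps.

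First, the ``overshoot'' computation is wrong. In the configuration model the branching factor during an exploration is the size-biased mean $\bar b\approx b=\E D(D-1)/\E D$, a \emph{fixed constant}, not $j-1$. From a vertex of degree $\ge j/2$ the expected number of vertices at depth $R$ is of order $j\,\bar b^{\,R-1}$, so the expected number landing in the high-degree set is of order $j^2\bar b^{\,R-1}\mathfrak u_j/d$, i.e. roughly $e^{(\delta\log b-\eta)j}$, not $j^{\delta j/2}$. The quantity $(j-1)^R\mu'([j-1,2j-1])$ does not count anything in this model; the only way to get $j-1$ offspring at every step would be to restrict to paths through high-degree vertices, and the per-step survival factor $(j-1)\mu'([j-1,2j-1])\asymp j^2\mathfrak u_j/d\to 0$ then kills the branching.

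Second, and more seriously, the Chernoff-plus-union-bound argument for the expander property fails for small $|A|$. With the corrected overshoot (a constant in $n$), the tail bound $\exp(-ca\cdot\text{overshoot})$ cannot beat the $\binom{|W_0|}{a}\asymp n^{a}$ choices when $a$ is bounded; you would need the overshoot to exceed $\log n$, but it is $O_{\mu,\delta}(1)$. Handling only $a=1$ by the peeling construction does not fix $a=2,3,\dots$, which are equally bad. (Incidentally, Definition~\ref{def:cut-off line} is a device for generating and analysing random matchings, not the iterative peeling you describe.)

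The paper circumvents this obstacle by an essentially different construction. It contracts each $R$-ball $N(v,r_v)$ to a single vertex, obtaining an auxiliary graph $G_n'$, and then uses the cut-off line algorithm to locate a genuine $s$-core $W_0'\subset G_n'$ with $s=\theta M/20\ge 5$. The expander property (Lemma~\ref{lm:w0:expander}) is then proved in $G_n'$ by a \emph{direct matching count}: the probability that all $\ge ms$ half-edges of a set $A$ of size $m$ land in a prescribed $B$ of size $2m$ is at most $(Cm/N)^{ms}$, and the exponent $ms$ with $s\ge 5$ beats the entropy $\binom{N}{2m}^2\lesssim N^{4m}$ even for $m=1$. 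It is precisely the core's minimum degree $s$, not a Chernoff overshoot, that makes the union bound close; your approach lacks any analogue of this step. The subexponentiality of $\mu$ enters only at the very end, when one checks that the parameter equations \eqref{eq:choose:R:mathfrak r} forced by the exploration step admit a solution with $R\le\delta j$.
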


Fix $\lambda>0$. Let $\alpha, \beta, j, R$, $\bar G_n$, $W_0$ be as in Lemma \ref{lm:structure:subex}  where $\delta>0$ is a sufficiently small constant depending on $\lambda$. It suffices to show that the contact process $(X_t)$ on $\bar G_n$ with all vertices infected initially survives for $e^{\Omega(n)}$-time with probability at least $1-e^{-\Omega(n)}$ over the contact process. For the rest of the proof, all the vertices, edges, paths, and balls are of $\bar G_n$.

Let $X^{0}_{t}$ be the collection of infected vertices of $W_0$ at time $t$. We show the following analog of Lemma \ref{lm:pathinfection:extail}.

\begin{lemma}\label{lm:pathinfection:subex} There exists a constant $c$ depending only on $\lambda$ such that for sufficiently large $n$ and for every $u\in N(v, R)\cap W_0$, we have
	\begin{equation}\label{eq:pathinfection}
	\P\left (u\in X_{t+2e^{cj}}^{0} \bigg| v\in X^{0}_{t}\right ) \ge \frac{3}{4}.
	\end{equation}
\end{lemma}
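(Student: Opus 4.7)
The plan is to pick a small constant $c = c(\lambda) > 0$ and split $[t, t + 2e^{cj}]$ into a \emph{spreading phase} $[t, t + e^{cj}]$ and a \emph{maintenance phase} $[t + e^{cj}, t + 2e^{cj}]$. The two key structural inputs from Lemma \ref{lm:structure:subex} are that every vertex of $W_0$ has $\bar G_n$-degree at least $j/2$, and that the radius $R$ can be taken at most $\delta j$ for $\delta$ arbitrarily small. Choosing $\delta$ much smaller than $c$ is what will separate the path-traversal scale from the star-survival scale, compensating for the fact that $\lambda$ is now fixed (and possibly tiny) rather than large as in Lemma \ref{lm:pathinfection:extail}.

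For the spreading phase, I would first apply \cite[Lemma 1.1]{cd09} to the star centered at $v$: since $\deg_{\bar G_n}(v) \ge j/2$, the contact process restricted to $N(v, 1)$ keeps at least one vertex infected throughout $[t, t + e^{cj}]$ with probability $\ge 1 - e^{-c' j}$ for some $c' = c'(\lambda) > 0$, provided $c < c'$. Conditioned on this, I would fix a path of length at most $R$ from $v$ to $u$ (which exists because $u \in N(v, R) \cap W_0$), partition $[t, t + e^{cj}]$ into $\lfloor e^{cj}/R \rfloor$ disjoint subintervals of length $R$, and apply \cite[Lemma 2.4]{cd09} on each subinterval: the infection traverses the path within such a window with probability $\ge e^{-6R} \ge e^{-6\delta j}$, independently across windows by using disjoint portions of the Poisson clocks along the path. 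The probability that $u$ is never infected in the spreading phase is thus at most
\begin{equation*}
\bigl(1 - e^{-6\delta j}\bigr)^{\lfloor e^{cj}/R \rfloor} \;\le\; \exp\!\left(-\tfrac{1}{R}\, e^{(c - 6\delta)j}\right),
\end{equation*}
which tends to $0$ as $j \to \infty$ whenever $\delta < c/6$.

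For the maintenance phase, suppose the infection reaches $u$ at some time $t^\star \in [t, t + e^{cj}]$. Applying \cite[Lemma 1.1]{cd09} now to the star at $u$, the process restricted to $N(u, 1)$ survives through $[t^\star, t + 2e^{cj}]$ with probability $\ge 1 - e^{-c' j}$. Conditioned on survival, to upgrade ``some leaf of $u$ is infected'' to ``$u$ itself is infected at the specific terminal time $t + 2e^{cj}$,'' I would invoke Lemma \ref{lem:graph rep2} to restrict attention to the star at $u$, and analyse that simpler chain directly: as soon as at least $j/4$ leaves of $u$ are infected, the total rate of infection into $u$ is $\ge \lambda j/4$, so in any interval of length $4/(\lambda j)$ the vertex $u$ becomes infected with probability $\ge 1 - e^{-1}$, and hence $u$ is in the infected state at any prescribed later time up to a further $e^{-\Omega(j)}$ error. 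A union bound over the three failure events --- (i) the $v$-star infection dies in the spreading phase, (ii) no path-traversal attempt succeeds, (iii) the $u$-star dies or $u$ is in an unusually long recovery gap at time $t + 2e^{cj}$ --- yields the claimed $3/4$ lower bound for $n$ large enough. The main obstacle is the quantitative control in step (iii): one must rule out that the number of infected leaves of the star at $u$ drops below $j/4$ near the terminal time. I would handle this by a standard birth--death comparison for the number of infected leaves of the star (treating $u$'s state as a slow modulation), showing that this count concentrates near its quasi-stationary value $\Theta(j)$ and stays above $j/4$ throughout $[t^\star, t + 2e^{cj}]$ with probability $1 - e^{-\Omega(j)}$, which is more than enough to absorb into the $3/4$ bound once $c$ is fixed small and $\delta < c/6$.
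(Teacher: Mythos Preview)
Your approach matches the paper's: exponential star survival at $v$ yields many independent path-traversal attempts in $[t,t+e^{cj}]$, and then exponential star survival at $u$ together with a last-clock-ring argument pins the infection at the terminal time. One correction: since $\lambda$ is fixed (and possibly tiny) rather than large, the per-attempt traversal probability along a length-$R$ path is $c'(\lambda)^{R+1}$, not $e^{-6R}$ --- the $e^{-6}$ constant is the large-$\lambda$ bound used in Lemma~\ref{lm:pathinfection:extail}, so you need $\delta$ small relative to $\log(1/c'(\lambda))$ rather than to $6$, and for star survival you should invoke \cite[Lemma~5.3]{berger2005spread} (survival time $e^{c_\lambda d}$ on a degree-$d$ star) rather than \cite[Lemma~1.1]{cd09}. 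The paper also excises the path $\Gamma$ from the star at $v$, setting $N_{v,u}:=\{v\}\cup(N(v,1)\setminus\Gamma)$, so that the star-survival event and the path-traversal attempts use disjoint edge-clocks and the independence you assert across windows is clean.
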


Assuming Lemma \ref{lm:pathinfection:subex}, we prove the following analog of Lemma \ref{lm:azuma:1}.
\begin{lemma}\label{lm:azuma:2}	Let $c$ be the constant in Lemma \ref{lm:pathinfection:subex}.	There exists a positive constant $C'$ depending only on $j$ and $R$ such that for every integer $a\in (0,  \alpha \beta n]$,
	\begin{equation}\label{eq:path:expectation2}
	\P\left (\left |X^{0}_{t+e^{c j}}\right |\le \frac{5}{4}a\bigg| |X^{0}_{t}|= a\right )\le 2\exp\left (-\frac{a}{C'}\right ).
	\end{equation}
\end{lemma}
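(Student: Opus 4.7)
The plan is to mirror the proof of Lemma \ref{lm:azuma:1} essentially verbatim, replacing the single-path infection input from Lemma \ref{lm:pathinfection:extail} with the subexponential analog from Lemma \ref{lm:pathinfection:subex}. Since the relevant degree bound ($2j$) and the radius ($R$) are both constants for $\bar G_n$ (now fixed through the parameter $\delta$), the combinatorial parameters that control the Azuma-type martingale are unchanged; only the time horizon grows from the constant $C$ to $e^{cj}$ (up to the factor $2$ present in Lemma \ref{lm:pathinfection:subex}, which we can absorb into $c$).

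Concretely, I would condition on $|X^0_t| = a$ and let $G_{n,t}$ be the subgraph of $\bar G_n$ induced on $\bigcup_{v \in X^0_t} N(v, R)$, which has $a' \le a(2j)^{R+1}$ vertices. Run the coupled contact process $(\hat X_{t'})_{t' \ge t}$ on $G_{n,t}$ with $\hat X_t = X^0_t$, and let $\mathcal{X}$ be the set of $u \in W_0 \cap \hat X^0_{t+e^{cj}}$ that admit a directed infection trajectory in the graphical representation starting at some $v \in X^0_t$ at time $t$, ending at $u$ at time $t + e^{cj}$, and supported spatially inside $B(v, R)$. Monotonicity of the graphical representation gives $|\mathcal{X}| \le |X^0_{t+e^{cj}}|$, so it suffices to prove concentration of $|\mathcal{X}|$.

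Next, enumerate $V(G_{n,t})$ arbitrarily as $v_1, \dots, v_{a'}$ and form the Doob martingale $\mathcal{X}_i = \E\big[|\mathcal{X}| \,\big|\, \mathcal{F}_i, X^0_t\big]$, where $\mathcal{F}_i$ encodes all recovery and infection clocks in $(t, t+e^{cj}]$ attached to $v_1, \dots, v_i$ and the edges among them. Resampling the clocks at a single vertex only affects the infection status of sites within its $R$-ball, so $\|\mathcal{X}_i - \mathcal{X}_{i-1}\|_\infty \le K := (2j)^{R+1}$. The embedded expander property from Lemma \ref{lm:structure:subex} gives $|N(X^0_t, R) \cap W_0| \ge 2a$, and applying Lemma \ref{lm:pathinfection:subex} pointwise to each pair $(v,u)$ with $v \in X^0_t$ and $u \in N(v,R) \cap W_0$ yields
\[
\mathcal{X}_0 \ge \tfrac{3}{4}\,|N(X^0_t, R) \cap W_0| \ge \tfrac{3}{2}a.
\]
Azuma's inequality with deviation $s = a/4$ then gives
\[
\P\!\left(|\mathcal{X}| \le \tfrac{5}{4}a\right) \le 2\exp\!\left(-\frac{(a/4)^2}{2 a' K^2}\right) \le 2\exp(-a/C'),
\]
with $C' = 32(2j)^{3(R+1)}$ depending only on $j$ and $R$.

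The main subtlety, just as in Lemma \ref{lm:azuma:1}, is to arrange independence correctly: the path-infection estimate is applied to each candidate target $u$ using only the clocks within $B(v, R)$, which is why one works with the truncated process $(\hat X_{t'})$ on $G_{n,t}$ rather than the original process on $\bar G_n$. The parameter $c$ from Lemma \ref{lm:pathinfection:subex} enters solely through the time horizon $e^{cj}$ and is completely decoupled from the concentration constant $C'$, since $C'$ is dictated by the combinatorial graph quantities $j$ and $R$ only. I do not anticipate a significant obstacle beyond this bookkeeping.
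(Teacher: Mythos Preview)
Your proposal is correct and follows exactly the approach the paper intends: the paper's own proof of this lemma consists of the single sentence ``The proof of this lemma is identical to the proof of Lemma \ref{lm:azuma:1},'' and you have carried out precisely that transcription, swapping the time horizon $C$ for $e^{cj}$ and the path-infection input Lemma \ref{lm:pathinfection:extail} for Lemma \ref{lm:pathinfection:subex}. The factor-of-$2$ discrepancy between the $2e^{cj}$ in Lemma \ref{lm:pathinfection:subex} and the $e^{cj}$ in the statement is, as you note, harmless and can be absorbed into $c$.
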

The proof of this lemma is identical to the proof of Lemma \ref{lm:azuma:1}. Using this lemma, the proof of Theorem \ref{thm: subexp tail main} is identical to that of Theorem \ref{thm: exp tail main}-(2). It remains to prove Lemma \ref{lm:pathinfection:subex}.  

\begin{proof}[Proof of Lemma \ref{lm:pathinfection:subex}]  Let $\Gamma$ be a path of length at most $R$ connecting $v$ and $u$. Since $v\in W_0$, $N_{v, u}:= \{v\}\cup\left (N(v, 1)\setminus \Gamma\right )$ contains a star with $j/4$ leaves. Let $\mathcal A$ be the event that the infection on $N_{v, u}$ survives in the entire time interval $\left [t, t+e^{c j}\right ]$ for some constant $c$ depending only on $\lambda$.
	
	By \cite[Lemma 5.3]{berger2005spread}, by choosing $\delta$ sufficiently small in Lemma \ref{lm:structure:subex} and using the inequality $j\ge R/\delta\ge 1/\delta$, we have
	\begin{equation}\label{key}
	\P\left (\mathcal A\bigg| v\in X^{0}_{t} \right ) \ge \frac{99}{100}.\nonumber
	\end{equation}
	Since there is a path of length at most $R$ from $v$ to $u$, by \cite[Lemma 2.4]{cd09}, there exists a constant $c'>0$ depending only on $\lambda$ such that for any time $t'$,
	\begin{equation}\label{key}
	\P\left (u\in X^{0}_{t''}\text{ for some $t''\in [t', t'+R+1]$}\bigg| N_{v, u}\cap X^{0}_{t'} \neq \emptyset\right ) \ge c'^{R+1}. \nonumber
	\end{equation}
	Thus,
	\begin{eqnarray}\label{key}
	&&\P\left (u\in X^{0}_{t'}\text{ for some $t'\in [t, t+e^{cj}+R+1]$}\bigg| v\in X^{0}_{t}\right ) \nonumber\\
	&&\qquad  \ge\P\left (\Bin \left (\frac{e^{cj}}{R}, c'^{R+1}\right )\ge 1\right )-\frac{1}{100}  \nonumber\\
	&&\qquad \ge\P\left (\Bin \left (\frac{e^{c R/\delta}}{R}, c'^{R+1}\right )\ge 1\right )-\frac{1}{100}\ge \frac{98}{100} \nonumber
	\end{eqnarray}
	by choosing $\delta$ sufficiently small (for example, $\delta = \frac{c}{100\log c'^{-1}}$) in Lemma \ref{lm:structure:subex}. Since $R\le \delta j$, we can replace the interval $[t, t+e^{cj}+R+1]$ in the above inequality by the bigger interval $[t, t+2e^{cj}]$.
	
	Assume that $u\in X^{0}_{t'}$ for some $t'\in [t, t+2e^{cj}]$. By the third inequality in \cite{cd09}, Lemma 2.3 and Markov's inequality, with probability at least $\frac{99}{100}$, there exists a time $t''\in [t', t'+e^{cj}]$ such that there are at least $\lambda j/4$ neighbors of  $u$ infected at time $t''$. By  \cite[Lemma 2.2]{cd09}, with probability at least $\frac{99}{100}$, there are at least $\lambda j/10$ neighbors of $u$ infected at any time in the time interval $[t'', t'' + 3e^{cj}]$. Since $t+2e^{cj}\in [t'', t'' + 3e^{cj}]$, the probability that $u$ is infected at time $t+2e^{cj}$ is at least the probability that the last clock rings before time $t+2e^{cj}$ is an infection clock rather than the recovery clock at $u$. Since there are at least $\lambda j/10$ neighbors of $u$ infected at any time in the interval $[t'', t+2e^{cj}]$, the probability of the above event is
	$$\frac{\lambda^{2}j/10}{\lambda^{2}j/10 +1 } \ge \frac{99}{100}.$$
	That completes the proof of Lemma \ref{lm:pathinfection:subex}.
\end{proof}

\begin{remark}\label{rmk:onevertex:subex} To prove the corresponding result (Remark \ref{rmk:onevertex}) for subexponential $\mu$ when initially, there is only one uniformly chosen vertex $v$ infected in $G_n$, observe that with positive probability over the choice of $v$, $v$ belongs to $W_0$. Thus, it suffices to condition on this event and show that with positive probability over the contact process, the process survives until time $e^{\Omega(n)}$. 
	
	Since $W_0$ is an $(\alpha, R)$-embedded expander, observe that for every $A\subset W_0$ with $|A|\le \alpha |W_0|/4$, we have
	$$|N(A, 2R)|\ge 4|A|.$$
	
	Thus, using the same proof as for Lemmas and \ref{lm:pathinfection:subex} and \ref{lm:azuma:2}, one can see that there exist constants $c'$ and $C''$ such that for all integer
	$a\in (0,  \alpha \beta n/4]$,
	\begin{equation}\label{eq:path:expectation2:2}
	\P\left (\left |X^{0}_{t+e^{c' j}}\right |\le 2a\bigg| |X^{0}_{t}|= a\right )\le 2\exp\left (-\frac{a}{C''}\right ).
	\end{equation}

	Let $k$ be the largest number such that $2^{k}\le \alpha \beta n/4$.  Let $\mathcal A_0 $ be the event that $X^{0}_{0} = \{v\}\subset W_0$. For each $i=1, \dots, k$, let $\mathcal A_i$ be the event that 
	$$\left |X^{0}_{ie^{c'j}}\right |\ge 2^{i}.$$ 
	Let $\mathcal A^*$ be the event that the contact process survives up to time $e^{\Omega(n)}$. We want to show that
	\begin{equation}\label{eq:onevertex:subex1}
	\P\left (\mathcal A^*\bigg|X^{0}_{0} = \{v\}\right ) = \Omega(1).
	\end{equation}
	In fact, 
	\begin{equation*}
		\P\left (\mathcal A^*\bigg|X^{0}_{0} = \{v\}\right ) \ge \P\left (\bigcap_{i=1}^{k}\mathcal A_i \cap \mathcal A^*\bigg|X^{0}_{0} = \{v\}\right )
		\ge  \prod_{i=1}^{k}\P\left (\mathcal A_i  \bigg|\mathcal A_{i-1}\right) \P\left (\mathcal A^*\bigg|\mathcal A_k\right )\nonumber.
	\end{equation*}
	By \eqref{eq:path:expectation2:2}, for each $i=1, \dots, k$, $\P\left (\mathcal A_i  \bigg|\mathcal A_{i-1}\right) \ge 1 - 2\exp\left (-\frac{2^{i-1}}{ C''}\right )$. Finally, by the same argument as in the proof of Theorem \ref{thm: exp tail main}-(2), once there are about $\Theta(n)$ vertices in $W_0$ infected, the contact process survives for an exponentially long time \textsf{whp}. In other words, 
	$\P\left (\mathcal A^*\bigg|\mathcal A_k\right ) = 1-o(1)$. 
	Let $k'$ be the smallest number such that $2^{k'-1}\ge 10C''$. Combining all of these inequalities, we obtain
	\begin{eqnarray}\label{}
	\P\left (\mathcal A^*\bigg|X^{0}_{0} = \{v\}\right ) &\ge& \frac{98}{100}\prod_{i=2}^{k'}\left (1 - 2\exp\left (-\frac{2^{i-1}}{ C''}\right )\right ) \times \prod_{i=k'+1}^{k}\left (1 - 2\exp\left (-\frac{2^{i-1}}{ C''}\right )\right )\nonumber\\
	&\ge& \frac{98}{100}\left (1 - 4e^{-10}\right )\prod_{i=2}^{k'}\left (1 - 2\exp\left (-\frac{2^{i-1}}{ C''}\right )\right )=\Omega(1)\nonumber
	\end{eqnarray}
	as desired. That completes the proof of Remark \ref{rmk:onevertex}.
\end{remark}

\section{Proof of the structural lemma}\label{sec:structure}

In this section, we prove the structural Lemmas \ref{lm:structure} and \ref{lm:structure:subex}. We start by proving Lemma \ref{lm:structure:subex} for subexponential $\mu$ in Section \ref{sec:structural:subex}. The proof of Lemma \ref{lm:structure} is very similar and is presented in Section \ref{sec:structural:ex}.
\subsection{Proof of Lemma \ref{lm:structure:subex}} \label{sec:structural:subex} \quad 

\textit{Step 1. Preprocessing.} ~In this step, we eliminate high-degree vertices in $G_n$ so that the degrees become bounded.  This will allow us to control the size of the neighborhoods that we explore in the next steps. We prove in Lemma \ref{lm:degree:reduction} that the elimination does not significantly affect relevant parameters of $G_n$.

Let $b = \frac{\sum_{l=1}^{\infty}l(l-1)\mu(l)}{\sum_{l=1}^{\infty} l\mu(l)}>1$ be the branching rate of $\mu$.  Let $d$ be the mean $d = \E_{D\sim \mu} D>1$.
For a constant $j$, consider the graph $\bar G_n$ obtained from $G_n$ by deleting all vertices with degree at least $2j+1$ together with their half-edges and their matches. Let $\bar n$ be the number of vertices of $\bar G_n$ and $0\le d_1\le \dots\le d_{\bar n}\le 2j$ be the degree sequence of vertices in $\bar G_n$.

The branching rate $\bar b$ of a (deterministic) degree sequence $(d_i)$ is defined to be the branching rate of the empirical measure generated by $(d_i)$, namely,
$$\bar b = \frac{\sum_{l} l(l-1)\#\{i: d_i = l\}}{\sum_{l} l\#\{i: d_i = l\}}.$$
Throughout the proof, $\ep$ can be any small constant (for example, $\ep = 1/2$).
\begin{lemma}[Eliminating high-degree vertices]\label{lm:degree:reduction} Let $j_0, \ep$ be any positive constants with $\ep<1$. There exists a positive constant $j \ge j_0$ such that the following hold \textsf{whp}.
	\begin{enumerate}
		\item\label{item:config} Conditioned on the degree sequence $(d_1, \dots, d_{\bar n})$, the edges of $\bar G_n$ form a uniformly chosen perfect matching of its half-edges.
		\item \label{item:sumdi}The number of vertices and the total degree in $\bar G_n$ (which is twice the number of edges of $\bar G_{n}$) satisfy
		\begin{equation*} 
		\bar n\ge (1-\ep)n \quad \text{and}\quad 
		d_1+\dots +d_{\bar n} \in \left (1- \ep, 1+\ep\right ) nd.\nonumber
		\end{equation*}
		\item \label{item:branchingrate} The branching rate $\bar b$ of the degree sequence of $\bar G_n$ satisfies $\bar b\in (1-\ep, 1+\ep)b$.
		\item \label{item:dj} For all $i$, $0\le d_i\le 2j$. The number of vertices with large degree is as expected
		\begin{equation}\label{key}
		\#\left \{i\in \{1, \dots, \bar n\}: d_i \in \left [\frac{j}{2}, 2j\right ]\right \} \ge \ep n \mu[j, 2j].\nonumber
		\end{equation}
	\end{enumerate}

\end{lemma}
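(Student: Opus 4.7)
The strategy is to verify the four items by choosing $j \ge j_0$ sufficiently large (depending on $\mu$ and $\ep$). Write $D_v$ for the original degree of $v \in V$ in $G_n$, $\bar D_v$ for its degree in $\bar G_n$ (so the sorted $\bar D_v$'s are the $d_i$ of the lemma statement), $S := \{v : D_v > 2j\}$, and $\bar V := V \setminus S$. Item (1) is the standard self-similarity of the configuration model: conditionally on $S$, on the half-edges of $S$, and on every matched pair involving an $S$-half-edge, the restriction of the uniform half-edge matching to the remaining half-edges of $\bar V$ is again uniform. This is exactly the assertion that $\bar G_n$, given its degree sequence, is a uniformly random perfect matching of its half-edges.

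For items (2) and (3), I would apply a law of large numbers / Hoeffding bound to sums of i.i.d.\ functions of $D_1, \dots, D_n \sim \mu$: \textsf{whp}, $\bar n = (1+o(1)) n \mu([0, 2j])$, $\sum_{v \in \bar V} D_v = (1+o(1)) n\, \E[D\one_{D \le 2j}]$, and $\sum_{v \in \bar V} D_v(D_v - 1) = (1+o(1)) n\, \E[D(D-1)\one_{D \le 2j}]$. Since $\E D^2 < \infty$, dominated convergence gives $\E[D \one_{D > 2j}]$, $\E[D^2 \one_{D > 2j}] \to 0$ as $j \to \infty$. The passage from $D_v$ to $\bar D_v$ on $\bar V$ is controlled by $\sum_{v \in \bar V}(D_v - \bar D_v) = e(S, \bar V) \le \sum_{v \in S} D_v \sim n\, \E[D\one_{D > 2j}]$; correspondingly, $\sum_{v \in \bar V} \bar D_v(\bar D_v - 1)$ differs from $\sum_{v \in \bar V} D_v(D_v - 1)$ by at most $4j \cdot e(S, \bar V) \le 2n\, \E[D^2 \one_{D > 2j}]$ using $\bar D_v + D_v - 1 \le 4j$. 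Taking $j$ large, these corrections fall below any prescribed $\ep$-fraction of $nd$ and $nbd$ respectively, giving (2) and (3).

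For item (4), if $\mu([j, 2j]) = 0$ the claim is vacuous, so assume otherwise. By LLN the number of vertices with $D_v \in [j, 2j]$ is $(1+o(1)) n \mu([j, 2j])$ \textsf{whp}. For each such vertex, $\bar D_v = D_v - Z_v$, where $Z_v$ is the number of its half-edges paired to an $S$-half-edge. Under the uniform matching, $Z_v$ is extremely close in distribution to $\Bin(D_v, p_j)$ with $p_j \approx \E[D\one_{D > 2j}]/d \to 0$ as $j\to \infty$. A Chebyshev bound then yields $\P(Z_v \ge j/2) = O(p_j / j)$, so that the expected number of vertices with $D_v \in [j, 2j]$ and $\bar D_v < j/2$ is $O\bigl(n \mu([j, 2j]) p_j / j\bigr)$. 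Choosing $j$ large makes this an arbitrarily small fraction of $n \mu([j, 2j])$, and Markov's inequality promotes the expectation bound to a \textsf{whp} statement. Hence at least $\ep\, n \mu([j, 2j])$ vertices of $\bar V$ satisfy $\bar D_v \in [j/2, 2j]$, as required.

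The principal obstacle is item (4). A naive accounting of ``bad'' vertices via $\#\{v \in \bar V : D_v - \bar D_v \ge j/2\} \le 2 e(S, \bar V)/j$ is of the same order as $n\mu([j,2j])$ — or even larger — for heavier-tailed $\mu$ in the range $\E D^2 < \infty$ (e.g.\ $\mu(k) \propto k^{-4}$, where the ratio stays $\Theta(1)$), so the crude union bound does not suffice. The per-vertex concentration on $Z_v$ via Chebyshev supplies the extra factor $1/j$ that makes the bookkeeping close, and this is the step that most carefully uses $\E D^2 < \infty$.
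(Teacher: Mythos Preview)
Your treatment of Items~(1)--(3) follows the same route as the paper and is correct; in fact your bound for Item~(3), via
\[
\Bigl|\sum_{v\in\bar V}\bar D_v(\bar D_v-1)-\sum_{v\in\bar V}D_v(D_v-1)\Bigr|\le 4j\cdot e(S,\bar V)\le 2n\,\E[D^2\one_{D>2j}],
\]
is somewhat more direct than the paper's argument, which introduces two auxiliary truncation levels $k_0$ and $k$ before comparing moments.

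There is, however, a genuine gap in Item~(4). Your per-vertex estimate $\P(Z_v\ge j/2)=O(p_j/j)$ gives $\E[\#\text{bad}]=O\bigl(n\mu([j,2j])\,p_j/j\bigr)$, but Markov's inequality then only yields
\[
\P\bigl(\#\text{bad}>(1-\ep)\,n\mu([j,2j])\bigr)\;\le\;\frac{C\,p_j}{j(1-\ep)},
\]
which is a small constant depending on $j$, not $o(1)$ as $n\to\infty$. Since $j$ is fixed once chosen, this is \emph{not} a \textsf{whp} statement; you need concentration of $\#\text{bad}$, not a first-moment bound on it.

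The paper's remedy is to concentrate the total $\sum_{v:D_v\in[j,2j]}Z_v=e\bigl(S,\{v:D_v\in[j,2j]\}\bigr)$ directly, via the cut-off line algorithm: assign i.i.d.\ $\textnormal{Unif}[0,1]$ heights to half-edges, show by Chernoff that after matching all $S$-half-edges the cut-off line stays above $1-4\ep$ \textsf{whp}, and then (again by Chernoff) that the number of $[j,2j]$-half-edges above height $1-4\ep$ is at most $10\ep j\,n\mu([j,2j])$ \textsf{whp}. Dividing by $j/2$ gives $\#\text{bad}\le 20\ep\,n\mu([j,2j])$ \textsf{whp}. Incidentally, your ``naive'' bound would already succeed if you localized it: replacing $e(S,\bar V)$ by $e(S,\{v:D_v\in[j,2j]\})$ gives a hypergeometric-type quantity with mean $\approx p_j\cdot(\text{half-edges in }[j,2j])$, which concentrates \textsf{whp}, and then $\#\text{bad}\le (2/j)\,e(S,[j,2j])\lesssim p_j\,n\mu([j,2j])$. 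Your heavy-tail counterexample only bites because you used the global $e(S,\bar V)$; the per-vertex Chebyshev detour is not needed.
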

To simplify the notation, for the rest of this section \ref{sec:structure}, we define
$$\mathfrak u_j := \mu [j, 2j].$$
\begin{proof} 
	Since the proof of Items \ref{item:config}-\ref{item:branchingrate} is rather standard, we defer it to the Appendix, Section \ref{app:preprocessing}. Here, we only prove Item \ref{item:dj}. 
	Choose $j$ large enough such that 
	$$ \E_{D\sim \mu} D\one_{D\ge 2j+1}\le \ep/4.$$
	For each vertex $v\in G_n$, consider the random variable 
	$$X_v := \deg_{G_n}(v)\one_{\deg_{G_n}(v)\ge 2j+1}.$$ 
	These random variables are independent with mean at most $\ep/4$ and variance bounded by the second moment of $\mu$. By Chebyshev's inequality, \textsf{whp}
	$$\sum_{v\in G_n} X_v\le \ep n/2.$$
	Thus, \textsf{whp}, the total number of removed half-edges  from vertices of degree in $[0, 2j]$ in $G_n$ is at most $\sum_{v\in G_n} X_v\le \ep n/2$.

	By Chernoff inequality, \textsf{whp}, the number of half-edges of $G_n$ of vertices of degree in $[0, j)$ and $[j, 2j]$ are $(1-\ep, 1+\ep)nd$ and $(1-\ep, 2+2\ep)jn\mathfrak u_j $, respectively. Since $j$ is sufficiently large, $j\mathfrak u_j$ is very small compared to $d$. The first deleted half-edge has probability roughly $\frac{j\mathfrak u_j }{d}$ to be from vertices of degree in $[j, 2j]$. Ideally, one expects to delete at most $\frac{j\mathfrak u_j }{d}\ep nd $ half-edges from these vertices. We will show that it is the case, namely,
	\begin{claim}\label{claim:edge:j}
		The number of half-edges deleted from vertices of degree in $[j, 2j]$ is at most
		$$10\ep jn\mathfrak u_j\quad\text{\textsf{whp}}.$$
	\end{claim}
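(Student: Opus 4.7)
The proof reduces to a concentration estimate for the configuration-model matching. Let $B$ denote the set of half-edges incident to vertices of degree $\geq 2j+1$ and let $M$ denote the set of half-edges incident to vertices of degree in $[j,2j]$; the quantity in the claim is $Z$, the number of half-edges of $M$ paired with a half-edge of $B$ in the uniform random matching that defines $G_n$.

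My plan is first to bound $\E[Z]$ by conditioning on the degree sequence. The preceding Chebyshev estimate gives $|B|\le \ep n/2$ \textsf{whp}, while Chernoff applied to $\Bin(n,\mathfrak u_j)$ and to the total half-edge count yields $|M|\le 2(1+\ep)jn\mathfrak u_j$ and $2e = nd(1+o(1))$ \textsf{whp}. Conditionally on the degree sequence the matching is uniform, so $\E[Z\mid \text{degrees}] = |B||M|/(2e-1)$, which combined with $d>1$ gives $\E[Z\mid \text{degrees}] \le 3\ep\, j n\mathfrak u_j$ on the high-probability event above.

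Next I would upgrade this first-moment bound to a high-probability bound by sequentially revealing the matches of the half-edges in $B$. The $i$-th such half-edge is paired with a uniformly chosen unmatched half-edge, and the probability that this match lies in $M$ is at most $|M|/(2e-2|B|) = (1+O(\ep))|M|/(2e)$ uniformly over $i$, since at most $2|B| = O(\ep n)$ half-edges have been used so far. Hence $Z$ is stochastically dominated by $\Bin(|B|,q)$ for some $q$ with $|B|q\le 4\ep\, jn\mathfrak u_j$, and a standard Chernoff bound yields $Z\le 10\ep\, jn\mathfrak u_j$ \textsf{whp}, using that $jn\mathfrak u_j = \Theta(n)$ because $j$ is a fixed constant and $\mathfrak u_j>0$ (otherwise $M=\emptyset$ and the claim is trivial).

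The main technical point is the stochastic domination in the last step: the outcomes of the sequential exposures are not literally independent, since matching an $M$-half-edge slightly depletes $M$ in the remaining pool. However, after $i<|B|$ steps at most $O(\ep n)$ half-edges have been removed, so the relevant conditional probabilities change by a factor $1\pm O(\ep)$, which is comfortably absorbed into the slack between the $3\ep$ expectation bound and the $10\ep$ target. An alternative route, should the coupling feel delicate, is to apply Azuma's inequality to the pair-exposure martingale: a single swap of two matched pairs changes $Z$ by at most a constant, so the fluctuations are of order $\sqrt{n}$, which is $o(jn\mathfrak u_j)$ and hence negligible.
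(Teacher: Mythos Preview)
Your proposal is correct, but the route differs from the paper's. The paper proves Claim~\ref{claim:edge:j} via the \emph{cut-off line algorithm} (Definition~\ref{def:cut-off line}): each half-edge receives an independent $\mathrm{Unif}[0,1]$ height, and the half-edges of $B$ are matched one by one to the currently highest unmatched half-edge. After processing all of $B$ at most $2|B|\le \ep n$ half-edges lie above the final cut-off line, which forces that line to stay above $1-4\ep$ \textsf{whp}; a separate Chernoff bound on the heights of the $M$-half-edges then shows that at most $8\ep(1+\ep)jn\mathfrak u_j\le 10\ep jn\mathfrak u_j$ of them sit above $1-4\ep$, and those are exactly the deleted ones.

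Your argument is the more elementary direct computation: sequentially expose the matches of the $B$-half-edges, bound each conditional probability of hitting $M$ by $q=|M|/(2e-2|B|)$, and dominate $Z$ by $\Bin(|B|,q)$. This is valid; the depletion of $M$ along the way only \emph{lowers} subsequent hitting probabilities, so the coupling with i.i.d.\ $\mathrm{Ber}(q)$'s is clean, and the gap between the mean bound $\approx 3\ep jn\mathfrak u_j$ and the target $10\ep jn\mathfrak u_j$ easily absorbs the $(1+O(\ep))$ factor coming from the shrinking pool. The Azuma alternative you sketch also works, since a single pair-switch changes $Z$ by at most~$2$ and $\sqrt{e}=o(jn\mathfrak u_j)$. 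Two small remarks: the concentration of $2e$ around $nd$ uses Chebyshev (or the WLLN, via $\E D^2<\infty$) rather than Chernoff, since degrees are unbounded; and your use of $d>1$ is consistent with the paper's standing assumption in this section, though the constants in the claim are not sharp and any positive lower bound on $d$ would do after adjusting~$\ep$. The main reason the paper takes the cut-off line route here is that the same device is reused in the later core-finding arguments (Lemmas~\ref{lm:dominate:bin} and~\ref{lm:bin:core}); for the claim in isolation your approach is at least as clean.
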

	Assuming the claim, the number of vertices originally with degree in $G_n$ in $[j, 2j]$ and with degree less than $j/2$ in $\bar G_n$ is at most
	$$\frac{10\ep jn\mathfrak u_j }{j/2} = 20\ep n\mathfrak u_j\quad\text{\textsf{whp}}.$$
	By Chernoff inequality, in $G_n$, the number of vertices of degree in $[j/2, 2j]$ is in $(1-\ep, 2+2\ep) n\mathfrak u_j$ (where we choose $j$ so that $\mu[j/2, j)\le (1+\ep)\mathfrak u_j$). Hence, \textsf{whp}, the number of vertices in $\bar G_n$ with degree in $[j/2, 2j]$ is in $(1-21\ep, 2+2\ep) n\mathfrak u_j$, completing the proof of Item \ref{item:dj}.
\end{proof}

To prove Claim \ref{claim:edge:j}, we will use the following cut-off line algorithm to find the random matches of the deleted high-degree vertices in $G_n$.
\begin{definition}[Cut-off line algorithm]\label{def:cut-off line}
	Given a graph $G_n$ in which each vertex $v$ has degree $d_{G_n}(v)$. A perfect matching of the half-edges of $G_n$ is obtained through the following algorithm.
	\begin{itemize}
		\item Each half-edge of a vertex $v$ is assigned a height uniformly chosen in $[0, 1]$ and is placed on the line of vertex $v$.
		\item Set the cut-off line at height 1.
		\item Pick an unmatched half-edge independent of the heights of all unmatched half-edges and match it to the highest unmatched half-edge. Move the cut-off line to the height of the latter half-edge.
	\end{itemize}
\end{definition}
Figure \ref{fig2} illustrates the algorithm.
\begin{figure}
	\centering
	\begin{tikzpicture}[thick,scale=1.1, every node/.style={transform shape}]
	\foreach \x in {0,1, 2, 4, 6, 8}{
		\draw[black] (\x,0) -- (\x,4.8);
	}
	
	\draw[black] (0,0)--(8,0);
	\node at (0,-0.4) {$v_1$};
	\node at (1,-0.4) {$v_2$};
	\node at (2,-0.4) {$v_3$};
	\node at (4,-0.4) {$\dots$};
	\node at (6,-0.4) {$\dots$};
	\node at (8,-0.4) {$v_n$};
	
	\draw[red] (-0.3,3.2)--(8.3,3.2);
	\node at (9.5,3.2) {cut-off line}; 
	
	\draw[red, dashed] (-0.3,2.8)--(8.3,2.8);
	\node at (9.5,2.8) {new cut-off line}; 
	\draw (0, 3.2) node[black]{o};
	\draw (2, 0.5) node[black]{o};
	\draw (4, 4) node[black]{o};
	\draw (6, 3.5) node[black]{o};
	\draw (8, 3.7) node[black]{o};
	\draw (8, 3) node[black]{o};
	
	\draw (0, 0.5) node[cross=2.2pt,black]{}; 
	\draw (0,2.2) node[cross=2.2pt,black]{};
	\draw (1,2.4) node[cross=2.2pt,black]{};
	\draw (2,1.6) node[cross=2.2pt,black]{};
	\draw (4,1.36) node[cross=2.2pt,black]{};
	\draw (4,2.8) node[cross=2.2pt,red]{};
	
	\draw (6,0.3) node[cross=2.2pt,black]{};
	\draw (6,1.5) node[cross=2.2pt,blue]{};
	\draw (6,2.5) node[cross=2.2pt,black]{};
	\draw (8,.88) node[cross=2.2pt,black]{};
	\draw[->, blue] (5.8,1.6) -- (4.2,2.7);

	\end{tikzpicture}
	\caption{The circles `o' represent matched half-edges and the crosses `$\times$' represent unmatched half-edges. The blue half-edge is chosen and matched to the red half-edge which is the highest unmatched half-edge. Then the cut-off line is moved to the new cut-off line (dashed).} \label{fig2}
\end{figure}
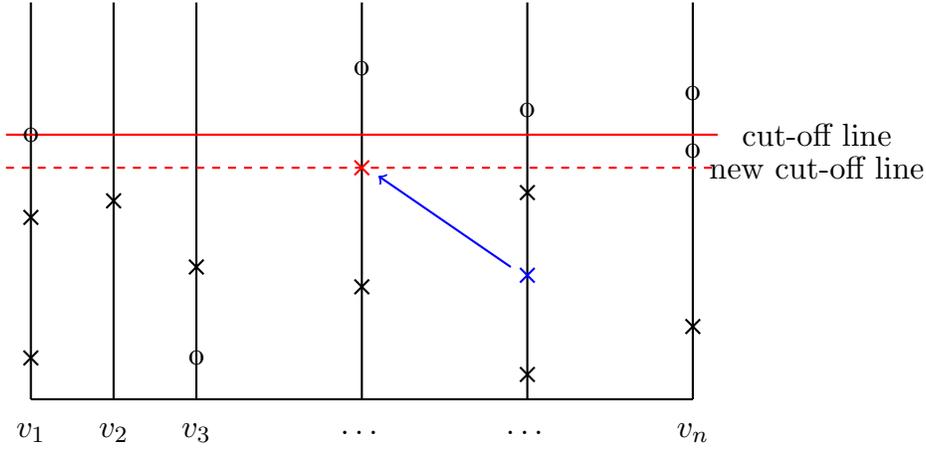

\begin{proof}[Proof of Claim \ref{claim:edge:j}]
	For each half-edge of a vertex in $G_n$ with degree at least $2j+1$, we choose their matches according to the above algorithm. Assume that at the end of this process of deleting half-edges of vertices with degree at least $2j+1$, the cut-off line is at height $h\in (0, 1)$. Note that a half-edge of a vertex whose degree in $G_n$ lies in $[j, 2j]$ is deleted if and only if it is above the cut-off line. We show that \textsf{whp}, $h\ge 1-4\ep$. Indeed, Item \ref{item:sumdi} implies that the number of half-edges of $G_n$ is at least $(1-\ep)nd/2$ and in choosing the heights of these half-edges, the number of half-edges with heights above $1-4\ep$ is at least $2\ep(1-2\ep)nd$ \textsf{whp}, by Chernoff inequality, which contradicts the event that we only delete at most $\ep nd$ half-edges altogether. Hence the cut-off line is above $1-4\ep$ \textsf{whp}. Since the number of half-edges with degree in $G_n$ belonging to $[j, 2j]$ is at most $(1+\ep) 2jn\mathfrak u_j$ \textsf{whp}, the number of half-edges above $1-4\ep$ is at most $8\ep (1+\ep)j n\mathfrak u_j\le 10 \ep j n \mathfrak u_j$ as claimed. That proves Claim \ref{claim:edge:j}.
\end{proof}

\textit{Step 2. Exploration.}  Let $W$ be the set of vertices in $\bar G_n$ whose degrees belong to $[j/2, 2j]$. We shall find the desired $(\alpha, R)$-embedded expander $W_0$ inside $W$. In this step, we explore the $R$-neighborhoods of these high-degree vertices in $W$.

After having preprocessed the graph $G_n$ to obtain $\bar G_n$, the remaining randomness is the perfect matching of the half-edges in $\bar G_n$. In this step, we condition on the preprocessing step and write the probability in terms of the randomness of the perfect matching in $\bar G_n$.
We run the following exploration process to perform some matchings of the half-edges of $\bar G_n$.  Let $R$ and $\mathfrak r$ be some large (bounded) numbers to be chosen (they are chosen in \eqref{eq:choose:R:mathfrak r}).
\begin{enumerate}
	\item For each vertex $v\in W$, set $r_v=1$.
	\item\label{item:exploration:2} If $r_v<R$ for all $v\in W$, explore the neighborhood $B_{\bar G_n}(v, r_v)$ simultaneously for all $v\in W$; noting that if $u\in B_{\bar G_n}(v, r_v) \cap B_{\bar G_n}(v', r_{v'})$, we stop exploring the branch starting at $u$. If $r_v\ge R$ for some $v\in W$, the process terminates. Otherwise, go to (3).
	\item For each vertex $v\in W$, if $N(v, r_v)$ intersects at most $100 \mathfrak r$ other balls $B_{\bar G_n}(v', r_{v'})$ ($v'\in W$), set $r_v := r_v+1$. Otherwise, keep $r_v$ intact. If none of the $r_v$ is increases in this step, the process terminates. Otherwise, go back to (2).
\end{enumerate} 

We show that when the exploration process terminates, the number of vertices $v\in W$ at which the process stops before reaching radius $R$ is insignificant. For that, we choose $R$ and $\mathfrak r$ so that the expected number of vertices of $W$ that lie in a neighborhood $N(v, 2R)$ is small compared to $\mathfrak r$ (see \eqref{eq:R:mathfrak r:1}). And so, it is unlikely that the different neighborhoods $N(v, R)$ intersect frequently.
\begin{lemma}\label{lm:rv:R} Let $R$ and $\mathfrak r$ be positive numbers bounded by some constants and satisfying
	\begin{equation}\label{eq:R:mathfrak r:1}
	\frac{\bar b^{2R-1} j^{2}\mathfrak u_j}{d} \le \frac{\mathfrak r}{10}.
	\end{equation}
	The number of $v\in W$ with $r_v = R$ is at least $0.98 |W|$ \textsf{whp}.
\end{lemma}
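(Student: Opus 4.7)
The plan is to reduce the claim to a first-moment count of nearby high-degree vertices. If $r_v < R$ when the process terminates, then at some earlier step $v$ failed to increment, so $N(v, r_v)$ intersected more than $100 \mathfrak r$ balls $B(v', r_{v'})$ with $v' \in W$. Since every radius stays in $[1, R]$, each such $v'$ lies in $B_{\bar G_n}(v, 2R)$. Thus the set of ``unfinished'' vertices is contained in
\[\mathcal B \;:=\; \bigl\{v \in W : |W \cap B_{\bar G_n}(v, 2R)| \ge 100\, \mathfrak r\bigr\},\]
and it suffices to prove $|\mathcal B| \le 0.02\,|W|$ whp.

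The main step I would carry out is the estimate $\E|W \cap B_{\bar G_n}(v, 2R)| \le C\mathfrak r$ for a fixed $v \in W$, via a half-edge breadth-first exploration from $v$ in $\bar G_n$. Conditioned on the preprocessing of Lemma \ref{lm:degree:reduction}, $\bar G_n$ is a configuration model on a bounded degree sequence with branching rate in $(1-\ep, 1+\ep)b$. Because $|B_{\bar G_n}(v, 2R)| \le (2j)^{2R+1} = O(1)$, we expose only a vanishing fraction of the graph, so by Lemma \ref{lem:aug} the conditional law of each newly revealed vertex's degree is (stochastically) close to the size-biased law of $\mu$. The expected number of vertices at depth $k \ge 1$ from $v$ is at most $2j \,\bar b^{\,k-1}$, and each such vertex has degree in $[j/2, 2j]$ (i.e.\ lies in $W$) with probability of order $j\mathfrak u_j/d$ (using that for subexponential $\mu$ and $j$ large we have $\mu[j/2, 2j] = O(\mathfrak u_j)$). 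Summing gives
\[\E|W \cap B_{\bar G_n}(v, 2R)| \;\le\; 1 \;+\; O\!\left(\sum_{k=1}^{2R} j \bar b^{\,k-1} \cdot \frac{j \mathfrak u_j}{d}\right) \;=\; O\!\left(\frac{\bar b^{\,2R-1}\, j^{2}\, \mathfrak u_j}{d}\right) \;\le\; C \mathfrak r,\]
where the last step uses the hypothesis \eqref{eq:R:mathfrak r:1}. Two successive Markov inequalities now close the argument: first $\P(v \in \mathcal B) \le C/100$ for each fixed $v$, so $\E|\mathcal B| \le C|W|/100$; then, applying Markov to $|\mathcal B|$, whp $|\mathcal B| \le 0.02|W|$.

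The principal technical obstacle is turning the intuitive branching-process picture into a rigorous bound on the configuration model $\bar G_n$. There are two subtleties: the conditional degree of each newly exposed neighbor is only approximately size-biased (controlled by Lemma \ref{lem:aug} since the exposed region has constant size), and $B_{\bar G_n}(v, 2R)$ may occasionally contain a cycle, which breaks the tree recursion. Both are handled by Lemma \ref{lem:1cyc} and the fact that the cyclic contribution is $O(1/n)$ and thus negligible at the level of first moments; the bounded radius $2R$ and bounded maximal degree $2j$ keep all relevant quantities $O(1)$ throughout. A minor bookkeeping point is that the natural bound produces $\mu[j/2, 2j]$ whereas the hypothesis is stated with $\mathfrak u_j = \mu[j, 2j]$; for the subexponential $\mu$ of Lemma \ref{lm:structure:subex} these are comparable up to a constant by choice of $j$, which can be absorbed in the constant $10$ in \eqref{eq:R:mathfrak r:1}.
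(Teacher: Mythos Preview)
Your reduction to the set $\mathcal B$ and the first-moment bound $\E|W\cap B_{\bar G_n}(v,2R)|\le C\mathfrak r$ are essentially what the paper does (though the paper's computation is more direct: since $\bar G_n$ already has a \emph{deterministic} bounded degree sequence with branching rate $\bar b$, one simply bounds the expected number of half-edges at depth $k$ by $2j\,\bar b^{\,k-1}$ and multiplies by the fraction of half-edges attached to $W$-vertices, namely $\frac{2j\,|W|}{(1-\ep)nd}$; there is no need to invoke Lemma~\ref{lem:aug} or Lemma~\ref{lem:1cyc} here).

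The genuine gap is the last step. A second Markov inequality applied to $|\mathcal B|$ does \emph{not} yield a \textsf{whp} statement: from $\E|\mathcal B|\le C|W|/100$ you only obtain
\[
\P\bigl(|\mathcal B|>0.02\,|W|\bigr)\;\le\;\frac{C|W|/100}{0.02\,|W|}\;=\;\frac{C}{2},
\]
which is a fixed constant, not $o(1)$. The events $\{v\in\mathcal B\}$ are strongly correlated through the common random matching, so a first-moment bound on $|\mathcal B|$ cannot be upgraded to concentration without further input.

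The paper supplies this concentration via Azuma's inequality. Setting $X=\#\{v\in W:r_v=R\}$ and exposing the random matching one vertex at a time (with $\mathcal F_i$ the $\sigma$-algebra generated by the matches of the half-edges of $u_1,\dots,u_i$), each martingale increment is bounded by $4j(2j)^{2R}=O(1)$, since re-matching the half-edges at a single vertex can change $X$ by at most the number of vertices in a $2R$-ball. With $\E X\ge 0.99|W|=\Theta(n)$ and $\bar n=\Theta(n)$ steps of bounded increment, Azuma gives $\P(X<0.98|W|)\le\exp(-\Omega(n))$, which is the missing \textsf{whp} conclusion.
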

Note that $\bar b$ is a random variable and so are $R$ and $\mathfrak r$. Nevertheless, we will choose $R$ and $\mathfrak r$ so that they are bounded.
\begin{proof} For this proof, we write $N(v, r_v)$ for $B_{\bar G_n}(v, r_v)$ for simplicity.
	We first show that for each $v\in W$, it is likely that $r_v = R$; more specifically,
	\begin{equation}\label{eq:rv<R}
	\P\left (r_v=R\right )\ge 0.99.
	\end{equation}
	Indeed, if $r_v<R$ then $N(v, R)$ intersects more than $100\mathfrak r$ other balls $N(u, R)$ ($u\in W$), which implies that $N(v, 2R)$ contains more than $100 \mathfrak r$ elements of $W$. By Markov's inequality,
	$$\P\left (r_v<R\right )\le \frac{\E \left |N(v, 2R)\cap W\right |}{100 \mathfrak r}.$$
	By Items \ref{item:sumdi} and \ref{item:dj} of Lemma \ref{lm:degree:reduction}, we have
	$$\E \left |N(v, 2R)\cap W\right | \le 2j \bar b^{2R-1} \frac{2j (2+\ep) n \mathfrak u_j}{(1-\ep) nd}\le \mathfrak r$$
	where we used \eqref{eq:R:mathfrak r:1}. Thus, we get \eqref{eq:rv<R}.
	
	It remains to show that \textsf{whp}, at least $0.98|W|$ vertices $v$ satisfy $r_v = R$. We derive this from \eqref{eq:rv<R} and Azuma's inequality. Let $X$ be the number of vertices $v\in W$ with $r_v = R$. By \eqref{eq:rv<R} and Item \ref{item:dj} of Lemma \ref{lm:degree:reduction},
	$$\E X\ge 0.99 |W|\ge 0.99 (1-\ep) n\mathfrak u_j.$$

	Enumerate the vertices of $\bar G_n$ by $u_1, \dots, u_{\bar n}$. Let $\mathcal F_i$ be the $\sigma$-algebra generated by the matchings of the half-edges of vertices $u_1, \dots, u_i$. We will apply the Azuma's inequality to the martingale $\E\left (X-\E X\big| \mathcal F_i \right ), i=0, \dots, \bar n$. Since the maximal degree in $\bar G_n$ is $2j$, we have that for every $i$,
	\begin{equation}\label{key}
	\left |\E\left (X-\E X\big| \mathcal F_{i+1} \right ) - \E\left (X-\E X\big| \mathcal F_{i} \right )\right |\le 4j \max_{u\in \bar G_n} |N(u, 2R)|\le 4j (2j)^{2R}
	\end{equation}
	where in the first inequality, we observed that for any fixed matching of the half-edges of vertices $u_1, \dots, u_i$ and any two different matchings of the half-edges of $u_{i+1}$, there exists a bijection between the extensions of these matchings into perfect matchings of $\bar G_n$ such that the number of different matchings are at most $4j$. Using Azuma's inequality and the fact that $j$ and $R$ are constants, we obtain
	\begin{equation}\label{key}
	\P\left (\left |X-\E X\right |\ge \ep \E X \right )\le \exp\left (-\frac{\ep^{2}(\E X)^2}{2 \bar n(4j (2j)^{2R})^{2}}\right ) = \exp\left (-\Omega(n)\right ).
	\end{equation}
	Thus, \textsf{whp}, $X\ge (1-\ep) \E X \ge 0.98 |W|$ as stated.
\end{proof}

\textit{Step 3. Finding $W_0$.}  We now find the desired embedded expander $W_0$. Our strategy is roughly as follows. In step 2, we have explored the $R$-neighborhoods of the high-degree vertices in $W$. We shall show (in Lemma \ref{lm:out:halfedges}) that most of these neighborhoods have a lot of unmatched half-edges. If we think about a new graph in which each of these neighborhoods acts as a single vertex with high degree, then a high-degree core of this new graph corresponds to the desired $W_0$.

Consider a new graph $G'_n$ with vertex set $V' \cup V''$ where $V''$ are the vertices of $\bar G_n$ that have not been touched in the exploration step 2 and each element of $V'$ is a ball $N(v, r_v)$ in step 2 whose half-edges are the unmatched half-edges of $N(v, r_v)$. If there is an unmatched half-edge that belongs to at least 2 balls, we choose one such ball at random and associate this half-edge to that ball. The remaining randomness is the uniform perfect matching of the half-edges in $G'_n$. We show that many vertices in $V'$ have high degree. Note that $|V'| = |W|$.
\begin{lemma}[$V'$ has high degree]\label{lm:out:halfedges} There exist positive constants $\ep', \ep''$ and $R_0$ depending only on $\mu$ such that for all bounded positive numbers $R_1, R, \mathfrak r$ satisfying
	\begin{equation}\label{eq:out:halfedges:cond}
	\begin{split}
	&R_0\le \min\{R_1, R-R_1\},\quad 800 \mathfrak r\le \ep'^{2} (\bar b (1-\ep''))^{R_1-1} j, \\
	&\qquad \frac{\bar b^{2R_{1}-1} j^{2}\mathfrak u_j}{d} \le \frac{1}{10^{4}}, \quad \frac{\bar b^{2R-1} j^{2}\mathfrak u_j}{d} \le \frac{\mathfrak r}{10},
	\end{split}
	\end{equation}
	the number of vertices in $V'$ with degree at least $M$ is at least $\frac{\ep'}{2}|V'|$ \textsf{whp} where
	\begin{equation}\label{def:M}
	M = \frac{\ep'^{3} (\bar b (1-\ep''))^{R-1} j}{8}.
	\end{equation}
\end{lemma}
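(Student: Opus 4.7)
The plan is to analyze each ball $N(v, R)$ for $v \in W$ with $r_v = R$ via sequential exposure of the configuration model on $\bar G_n$, treating the exploration as a branching process whose offspring distribution is the forward degree distribution on $\bar G_n$, which has mean $\bar b$. Writing $Y_r$ for the number of half-edges at distance exactly $r$ from $v$ pointing toward level $r+1$, we have $Y_0 = \deg v \in [j/2, 2j]$ and, as long as the exploration is tree-like, $Y_r \approx Y_{r-1} \bar b$, so $Y_R \approx j \bar b^{R-1}$. Since forward degrees are bounded by $2j - 1$ and \textsf{whp} the number of still-unmatched half-edges in $\bar G_n$ stays of order $nd$ throughout, a lower-tail Chernoff bound shows that each $Y_r$ is bounded below by its expectation up to a factor $(1-\ep'')$ per level. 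The target $M = \tfrac{\ep'^{3}}{8}(\bar b(1-\ep''))^{R-1} j$ is precisely this branching lower bound after absorbing slack for cycles, for the at most $100 \mathfrak{r}$ half-edges lost to intersections with other exploring balls, and for concentration errors.

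I would implement this in two phases split at the intermediate scale $R_1$. In Phase~1, grow $N(v, R_1)$. The hypothesis $\bar b^{2R_1 - 1} j^2 \mathfrak{u}_j / d \le 10^{-4}$ bounds the expected number of self-collisions inside this ball, because each of the $\lesssim j \bar b^{R_1-1}$ matching steps has collision probability at most $\tfrac{j \bar b^{R_1-1} \cdot 2j}{nd}$. Markov applied to the self-collisions, together with level-by-level lower-tail Chernoff for sums of forward degrees, gives with probability at least $1 - \ep'$ an unmatched boundary set $U_v$ at depth $R_1$ with $|U_v| \ge \tfrac{1}{2}(\bar b(1-\ep''))^{R_1-1} j$. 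The hypothesis $800 \mathfrak{r} \le \ep'^{2} (\bar b(1-\ep''))^{R_1-1} j$ then shows that the at most $100 \mathfrak{r}$ half-edges possibly matched into other exploring balls consume at most an $\ep'^{2}/8$ fraction of $U_v$, so a ``clean'' subset $U_v' \subset U_v$ of comparable size survives.

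In Phase~2, from each half-edge of $U_v'$ I would continue the exploration for $R - R_1$ more levels, now using the global bound $\bar b^{2R-1} j^2 \mathfrak{u}_j / d \le \mathfrak{r}/10$ to keep the expected number of self-collisions (and of collisions between subtrees rooted at distinct elements of $U_v'$) small. The branching estimate predicts at least $\bar b^{R-R_1}(1-\ep'')^{R-R_1}$ boundary half-edges per seed, and applying Azuma's inequality to the martingale that reveals the matchings one pair at a time then yields, with probability at least $1-\ep'$, a total of at least $\tfrac{\ep'^{3}}{8}(\bar b(1-\ep''))^{R-1} j = M$ boundary half-edges of $N(v,R)$, i.e.\ $\deg_{G'_n}(v) \ge M$. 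Combined with $\P(r_v = R) \ge 0.99$ from Lemma~\ref{lm:rv:R}, this gives $\P(\deg_{G'_n}(v) \ge M) \ge \ep'$ for each $v \in W$.

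Finally, to lift this per-$v$ bound to a \textsf{whp} statement about the cardinality of $\{v : \deg_{G'_n}(v) \ge M\}$, I would apply Azuma's inequality a second time to the Doob martingale obtained by revealing the matchings of $\bar G_n$ vertex-by-vertex, exactly as in the proof of Lemma~\ref{lm:rv:R}: the indicator $\one\{\deg_{G'_n}(v) \ge M\}$ depends only on the $R$-neighborhood of $v$, so each revealed vertex can change the total count by at most $(2j)^{R+1}$; since $R$ and $j$ are bounded this gives exponential concentration and hence $|\{v \in V': \deg_{G'_n}(v) \ge M\}| \ge \tfrac{\ep'}{2}|V'|$ \textsf{whp}. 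The main technical obstacle is handling the dependencies between balls explored simultaneously in Phase~2, since conditioning on $r_v = R$ biases the matching distribution; the cleanest workaround is to order $W$ and run the Phase~2 explorations sequentially in this order, folding the ``at most $100\mathfrak{r}$ intersections'' stopping rule into the Phase~1 cleaning step so that each Phase~2 subtree sees an essentially fresh configuration-model state.
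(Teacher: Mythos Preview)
Your two-phase split at $R_1$ and the final vertex-exposure Azuma step both match the paper. The gap is in how you obtain the per-vertex lower bound. Your Phase~2 Azuma ``on the martingale that reveals the matchings one pair at a time'' does not give usable concentration for a single ball $N(v,R)$: a matching revealed at depth $r$ can change the boundary count by as much as $(2j)^{R-r}$, so the sum of squared increments is at least $(j/2)\cdot(2j)^{2(R-1)}$ (from the depth-$1$ steps alone) and the Azuma fluctuation $\sqrt{j}\,(2j)^{R-1}$ swamps the target $M$, which is only of order $j(\bar b(1-\ep''))^{R-1}$. For the same reason you cannot absorb the collisions by deleting half-edges from $U_v$ at level $R_1$: collisions with other balls can occur at any depth up to $R$, and each one kills an entire sub-branch whose worst-case contribution to the boundary is $(2j)^{R-R_1}$, already far exceeding $M$.

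The paper avoids both problems by appealing to the Galton--Watson martingale limit $Z_r/(\bar b(1-\ep''))^r\to Z\in\{0,\infty\}$ rather than level-by-level Chernoff. For $R_1,R-R_1\ge R_0$ this gives (i) the number $l$ of depth-$R_1$ vertices satisfies $l\ge\ep'(\bar b(1-\ep''))^{R_1-1}j$ with probability $\ge\ep'$, and (ii) each depth-$R_1$ branch independently has boundary $\ge\ep'(\bar b(1-\ep''))^{R-R_1}$ with probability $\ge\ep'$. The crucial step is to apply Chernoff to the \emph{indicators} of (ii), not to the total boundary: with probability $\ge 0.9$ at least $\ep' l/4$ branches are large, and since the second hypothesis forces $100\mathfrak r\le\ep' l/8$, deleting \emph{any} $100\mathfrak r$ branches (in particular those touching other balls) still leaves $\ge\ep' l/8$ large ones and hence $\ge M$ unshared boundary half-edges. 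Note the resulting per-$v$ probability is only $\ge\ep'$, not $1-\ep'$; individual branches go extinct with positive probability, so your claimed probability $1-\ep'$ in Phase~1 is too strong, and your level-by-level Chernoff would in any case need forward degrees truncated at a constant independent of $j$ to keep the exponent from acquiring a factor $1/j$.
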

We note that in \eqref{eq:out:halfedges:cond}, the last two inequalities are for Lemma \ref{lm:rv:R} to hold. The constant $R_1$ is mainly for technical reasons. The condition that $800 \mathfrak r\le \ep' (\bar b (1-\ep''))^{R_1-1} j$ is there so that when we ignore at most $100\mathfrak r$ possible common branches, the number of remaining branches is still significant.
\begin{proof}
	First, we will show that
	\begin{claim}\label{claim:out:halfedges}
		For every $v\in W$, with probability at least $\ep'$, the number of half-edges on the boundary of $N(v, R)$ that do not belong to any of the balls $N(v', R), v'\in W\setminus\{v\}$ is at least $M$.
	\end{claim}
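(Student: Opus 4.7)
My plan is to view the exploration of $N(v,R)$ as a two-phase Galton--Watson (GW) branching process with bounded offspring (thanks to the degree cap $2j$ from preprocessing) and to control its deviation from a genuine GW tree by the collision bound guaranteed by step 2 of the exploration.

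\emph{Phase 1 (depth $0$ to $R_1$).} Since $v\in W$ has $\deg_{\bar G_n}(v)\ge j/2$ and every subsequently revealed vertex has size-biased offspring distribution of mean $\bar b>1$ and support in $[0,2j-1]$ (by Lemma \ref{lm:degree:reduction}), the exploration from $v$ up to depth $R_1$ is well approximated by a supercritical GW tree with $\ge j/2$ independent initial sub-branches. Because $|N(v,R_1)|\le (2j)^{R_1+1}$ is a constant independent of $n$, the configuration-model depletion introduces only $O(1/n)$ errors; in particular the probability of forming a cycle inside $N(v,R_1)$ is at most $\bar b^{2R_1-1}j^{2}\mathfrak u_j/d\le 10^{-4}$ by \eqref{eq:out:halfedges:cond}. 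A Kesten--Stigum / second-moment argument for a supercritical bounded-offspring GW tree shows that each initial branch has at least $c_0\bar b^{R_1-1}$ offspring at depth $R_1$ with probability $\ge \rho_0>0$ (with $c_0,\rho_0$ depending only on $\mu$); averaging over the $\ge j/2$ independent branches via Chernoff then yields that, with probability at least some $\ep'=\ep'(\mu)>0$,
\[
T_1 \;:=\; |\partial N(v,R_1)| \;\ge\; \ep'^{2}(\bar b(1-\ep''))^{R_1-1}j.
\]

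\emph{Phase 2 (collision subtraction).} The termination rule of step 2 guarantees that $N(v,R_1)$ intersects at most $100\mathfrak r$ other balls $N(v',r_{v'})$. Conditional on the tree-likeness event above, each such intersection is witnessed by a single ``first collision'' vertex at which exploration was stopped, so at most $100\mathfrak r$ sub-branches of the GW tree rooted at $v$ have been lost to collisions during phase 1. The hypothesis $800\mathfrak r\le T_1$ from \eqref{eq:out:halfedges:cond} then implies that at least $(7/8)T_1$ of the depth-$R_1$ half-edges remain as roots of sub-branches that have so far avoided other balls.

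\emph{Phase 3 (depth $R_1$ to $R$).} From each of these good depth-$R_1$ half-edges we continue the exploration for another $R-R_1$ generations. Each such sub-branch is again (approximately independent) GW with offspring mean $\bar b$ and bounded support; since the number of good sub-branches is $\ge T_1/8\ge 100\mathfrak r$, a second application of the Kesten--Stigum / Chernoff argument shows that with conditional probability $\ge 1/2$ the total count of depth-$R$ boundary half-edges descended from good sub-branches is at least $(\bar b(1-\ep''))^{R-R_1-1}$ times the number of good sub-branches. The last two inequalities in \eqref{eq:out:halfedges:cond} allow us to absorb the at most $100\mathfrak r$ further collisions that may occur between depths $R_1$ and $R$ (each costing a factor of at most $\bar b^{R-R_1}$ boundary half-edges) into an additional $\ep'$ factor, yielding with total probability $\ge \ep'$,
\[
\left|\partial N(v,R)\setminus\bigcup_{v'\ne v}N(v',R)\right| \;\ge\; \frac{\ep'^{3}(\bar b(1-\ep''))^{R-1}j}{8} \;=\; M.
\]

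\emph{Main obstacle.} The most delicate part will be justifying that conditioning on the step 2 termination event (in particular on $r_v=R$, which is a condition on \emph{all} balls simultaneously) does not distort the GW growth analysis of phases 1 and 3. The key point is that the collision count is a stopping time of the joint exploration process, and on the conditional law, the depleted empirical degree distribution of unmatched vertices is still close to the original one (an analog of Lemma \ref{lem:aug} for the preprocessed graph $\bar G_n$), so the branching rate of the good sub-branches remains $\bar b(1+o(1))$ and the Kesten--Stigum concentration arguments carry through.
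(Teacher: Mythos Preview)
Your two-phase decomposition (grow to depth $R_1$, then to $R$, subtracting collisions in between) and your use of GW lower bounds are exactly the paper's approach; the paper phrases the growth estimate via the a.s.\ convergence $Z_{R'}/(\bar b(1-\ep''))^{R'}\to Z$ rather than Kesten--Stigum plus Chernoff, but that is cosmetic. The substantive difference is \emph{where} you get the collision bounds, and this is what manufactures your ``main obstacle.''

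The claim is an \emph{unconditional} per-vertex probability bound; it is not conditioned on $r_v=R$ or on any outcome of the exploration. The paper never invokes the termination rule here. It instead applies the Markov-inequality argument behind Lemma~\ref{lm:rv:R} twice, directly to $v$: the third condition in \eqref{eq:out:halfedges:cond} gives $\E|N(v,2R_1)\cap W|\ll 1$, so with probability $\ge 0.99$ the ball $N(v,R_1)$ meets \emph{no} other $N(v',R_1)$; the fourth condition gives $\E|N(v,2R)\cap W|\le \mathfrak r$, so with probability $\ge 0.99$ the ball $N(v,R)$ meets at most $100\mathfrak r$ other $N(v',R)$. These are events in the unconditional probability space, so there is nothing to decouple from ``the conditional law of the step~2 termination event,'' and your main-obstacle paragraph is solving a problem that does not exist. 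Your Phase~2 sentence ``the termination rule guarantees \ldots'' is only valid on the event $\{r_v>R_1\}$, which is precisely the conditioning you then struggle to undo; replace it by the Markov bound and the issue disappears.

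The paper's collision bookkeeping is also cleaner than ``each collision costing at most $\bar b^{R-R_1}$ boundary half-edges.'' Letting $u_1,\dots,u_l$ be the depth-$R_1$ vertices, it proves that \emph{for every} $A\subset\{1,\dots,l\}$ with $|A|\le 100\mathfrak r$ one has $\bigl|\bigcup_{h\notin A}\partial B(u_h,R-R_1)\bigr|\ge M$: at least $\ep' l/4$ indices $h$ have $|\partial B(u_h,R-R_1)|\ge \ep'(\bar b(1-\ep''))^{R-R_1}$, and since $100\mathfrak r\le \ep' l/8$ by \eqref{eq:out:halfedges:cond}, at least $\ep' l/8$ such branches survive any admissible $A$. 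One then takes $A$ to contain the branches touched by the $\le 100\mathfrak r$ intersecting balls. This sidesteps any per-collision loss estimate, which in your formulation is not obviously bounded by $\bar b^{R-R_1}$.
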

	Indeed, let $u_1, \dots, u_l$ be the vertices of distance $R_1$ from $v$. Since $\frac{\bar b^{2R_{1}-1} j^{2}\mathfrak u_j}{d} <\frac{1}{10^{4}}$, by Lemma \ref{lm:rv:R},  with probability at least $0.99$, $N(v, R_1)$ does not intersect any other balls $N(v', R_1)$.
	
	Since $\frac{\bar b^{2R-1} j^{2}\mathfrak u_j}{d} <\frac{\mathfrak r}{10}$, by Lemma \ref{lm:rv:R}, with probability at least $0.99$, $N(v, R)$ intersects at most $100 \mathfrak r$ other balls $N(v', R), v'\in W$. Consider the branches $B(u_h, R-R_1)$ consisting of vertices at distance $r$ from $v$ and distance $r-R_1$ from $u_h$ for $R_1\le r\le R$. Conditioned on $u_1, \dots, u_l$, it suffices to show that with probability at least $0.9$, for any choices of sets $A\subset [1, \dots, l]$ with $|A|\le 100 \mathfrak r$,
	\begin{equation}\label{eq:boundary:1}
	\left |\bigcup_{h\in [1, \dots, l]\setminus A} \partial B(u_h, R-R_1)\right |\ge M.
	\end{equation}
	Letting $Z_{R'}$ be the number of children in the $R'$-th generation of the corresponding size-biased Galton-Watson process, we have for any $\ep''\in (0, 1)$, $\frac{Z_{R'}}{(\bar b(1-\ep''))^{R'}}$ converges almost surely to some random variable $Z$ (with $Z$ not identically 0 and taking values in $[0, \infty]$) as $R'\to \infty$ (see for example, \cite{AthreyaNey1972}, pages 24--29). For some sufficiently small constant $\ep'$ (that only depends on $\mu$), we have
	$$\P(Z\ge 4\ep')\ge 4\ep'.$$
	Thus, for a sufficiently large $R_0$ and $R_0\le R_1, R_0\le R-R_1$, we have
	\begin{equation}\label{key}
	\P\left (l \ge \ep'  (\bar b(1-\ep''))^{R_1-1}j\right )\ge \ep'\nonumber
	\end{equation}
	and
	\begin{equation}\label{key}
	\P\left (|\partial B(u_h, R-R_1)|\ge \ep'  (\bar b(1-\ep''))^{R-R_1}\right )\ge \ep' .\nonumber
	\end{equation}
	We can choose $\ep''$ so small that $\bar b(1-\ep'')>1$. Under the event that $l\ge \ep'  (\bar b (1-\ep''))^{R_1-1}j$, let $X_h$ ($h=1, \dots, l$) be the indicator of the event that $|\partial B(u_h, R-R_1)|\ge \ep'  (\bar b (1-\ep''))^{R-R_1} $. When $R_0$ is sufficiently large, $l$ is also large. Thus, with probability at least $0.9$, at least $\ep'l/4$ indices $h\in [1, \dots, l]$ have $X_h=1$. 
	Under this event, since $100 \mathfrak r\le \ep' l/8$ by \eqref{eq:out:halfedges:cond}, for any choices of sets $A\subset [1, \dots, l]$ with $|A|\le 100 \mathfrak r$, there are at least $\ep' l/8$ indices $h\notin A$ with $X_h=1$, which implies
	\begin{equation*}
	\sum_{h\in [1, \dots, l]\setminus A} \left |\partial B(u_h, R-R_1)\right |\ge \ep'^{2} (\bar b (1-\ep''))^{R-R_1} l/8.\nonumber
	\end{equation*}
	Since $j, \bar b$ and $R$ are bounded by some constant, for each $v$, the boundaries $\partial B(u_h, R-R_1)$ are disjoint with probability at least $0.9$, proving \eqref{eq:boundary:1} and Claim \ref{claim:out:halfedges}.

	Next, by using the Azuma's inequality the same way that we used in proving Lemma \ref{lm:rv:R}, we obtain that \textsf{whp}, there are at least $(1-\ep)\ep'|W|$ vertices $v$ satisfying the event in Claim \ref{claim:out:halfedges} simultaneously.
	Combining this with Lemma \ref{lm:rv:R} completes the proof of Lemma \ref{lm:out:halfedges}.
\end{proof}

Note that the (random) edges of $G'_n$ form a uniformly chosen perfect matching of its half-edges. On the half-edges of $G_n'$, consider the following coloring scheme on the half-edges:
\begin{itemize}
	\item For each vertex $v$ with at least $M$ half-edges in $G_n'$, choose exactly $M$ half-edges among them uniformly at random and color them \textsf{blue};
	
	\item Perform the uniform random matching among all half-edges in $G_n'$, and let $W_1'$ be the induced subgraph on vertices with degree at least $M$;
	
	\item Let $\mathfrak K$ be the number of \textsf{blue} edges, formed by two \textsf{blue} half-edges, and for each $v\in W_1'$, let $\deg_b(v)$ be the number of \textsf{blue} edges adjacent to $v$.

\end{itemize}

\noindent Conditioned on $\mathfrak K$,  we see that the distribution of $\{\deg_b (v) \}_{v\in W_1'}$ is given by
\begin{equation}
\{B_v\}_{v\in W_1'}, \ \ \ B_v \sim \textnormal{i.i.d.}\ \Bin(M,\theta) \ \textnormal{conditioned on } \sum_{v\in W_1'} B_v = 2\mathfrak K. \label{eq:condition:k}
\end{equation}
Note that due to the conditioning on the sum of $\{B_v\}$, their distribution is well-defined regardless of the specific value of $\theta$. However, for explicitness, we let 
$$\theta:= \frac{2\mathfrak K}{M|W_1'|}.$$

\begin{lemma}\label{lm:dominate:bin}
	Let $\ep'$ be as in Lemma \ref{lm:out:halfedges}. With high probability, 
	\begin{equation}\label{def:theta}
	\theta \ge \frac{\ep' \mathfrak u_jM }{60 d}.
	\end{equation}
\end{lemma}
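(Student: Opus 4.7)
The plan is to show that $\mathfrak K = \Theta(n)$ whp, with a constant good enough that dividing by $M|W_1'|/2$ yields the claimed lower bound on $\theta$. I condition on the outcome of the exploration in step 2 and on the random choice of blue half-edges, so that the only remaining randomness is the uniform perfect matching of the $H$ half-edges of $G_n'$. Setting $\ep = 1/2$ (allowed by the paper's convention at the start of this section), Lemma \ref{lm:degree:reduction}(\ref{item:sumdi}) gives $H \le \sum_i d_i \le \frac{3}{2} n d$ whp, and Lemma \ref{lm:out:halfedges} combined with Lemma \ref{lm:degree:reduction}(\ref{item:dj}) gives $|W_1'| \ge \frac{\ep'}{2}|W| \ge \frac{\ep' n \mathfrak u_j}{4}$ whp; in particular $B := M|W_1'|$, the total number of blue half-edges, is $\Theta(n)$.

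In a uniform random perfect matching of $H$ half-edges, each prescribed pair of half-edges is paired with probability $1/(H-1)$, so writing $\mathfrak K = \sum_{i<j} X_{ij}$ with $X_{ij}$ the indicator that blue half-edges $h_i, h_j$ are matched,
\[
\E[\mathfrak K] = \binom{B}{2}\frac{1}{H-1} \ge \frac{B^2}{3H}
\]
for large $n$. The joint probabilities $\P(X_{ij} X_{kl}=1) = \frac{1}{(H-1)(H-3)}$ when $\{i,j\}\cap\{k,l\}=\emptyset$ and $\P(X_{ij} X_{ik}=1)=0$ for a shared index yield $\mathrm{Var}(\mathfrak K) = O(B^4/H^3) + O(B^3/H^2) = O(n)$, since $B, H = \Theta(n)$. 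As $\E[\mathfrak K] = \Theta(n)$, Chebyshev's inequality gives $\mathfrak K \ge \frac{1}{2}\E[\mathfrak K]$ whp.

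Combining these estimates,
\[
\theta = \frac{2\mathfrak K}{M|W_1'|} \ge \frac{\E[\mathfrak K]}{M|W_1'|} \ge \frac{B}{3H} \ge \frac{M \ep' n \mathfrak u_j / 4}{3 \cdot 3 n d / 2} = \frac{\ep' M \mathfrak u_j}{18 d} \ge \frac{\ep' \mathfrak u_j M}{60 d},
\]
completing the proof. The only nonroutine step is the variance bound for $\mathfrak K$; it follows from the elementary joint matching formulas displayed above, and the rest is a chain of the high-probability estimates from Lemmas \ref{lm:degree:reduction} and \ref{lm:out:halfedges}.
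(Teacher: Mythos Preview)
Your proof is correct, but it takes a different route from the paper. The paper lower-bounds $\mathfrak K$ via the cut-off line algorithm: it splits the blue half-edges of $W_1'$ into two halves $A$ and $B$, matches $A$ first so that the cut-off line drops below $1-\theta_0/5$, and then counts the half-edges of $B$ lying above that line to conclude $\mathfrak K \ge \theta_0 d_{W_1'}/12$ \textsf{whp}. You instead compute the first two moments of $\mathfrak K$ directly in the uniform perfect matching of the $H$ half-edges of $G_n'$ and apply Chebyshev; the pairwise probabilities $\P(X_{ij}=1)=1/(H-1)$ and $\P(X_{ij}X_{kl}=1)=1/((H-1)(H-3))$ for disjoint pairs indeed give $\mathrm{Var}(\mathfrak K)=O(n)$ while $\E\mathfrak K=\Theta(n)$, so the concentration follows. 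Your argument is more elementary and self-contained, while the paper's keeps the cut-off line machinery consistent with the rest of Section~\ref{sec:structure}; both reach the same conclusion, and your slightly weaker input $|W_1'|\ge \ep' n\mathfrak u_j/4$ (from the stated Item~\ref{item:dj} with $\ep=1/2$, rather than the sharper bound used implicitly in the paper's proof) is harmlessly absorbed by the constant $60$.
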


\begin{proof}
	By Item \ref{item:dj} of Lemma \ref{lm:degree:reduction}, Lemmas \ref{lm:rv:R} and \ref{lm:out:halfedges}, the number of vertices in $W_1'$ is at least $\frac{\ep'|W|}{2}\ge \frac{\ep'n\mathfrak u_j}{2}$.
	
	Thus, the total number of \textsf{blue} half-edges in $W_1'$ is at least $ \frac{\ep'n\mathfrak u_jM}{2}$ \textsf{whp} while the total degree in $G'_n$ is at most $2nd$ by Item \ref{item:sumdi} of Lemma \ref{lm:degree:reduction}. Let 
	$$\theta_0 =\frac{\#\{\text{number of \textsf{blue} half-edges in } W_1'\}}{\#\{\text{number of half-edges in } G'_n\}}=:\frac{d_{W_1'}}{d_{G'_n}}\ge  \frac{\ep'\mathfrak u_jM }{4d} .$$
	

	Since $d_{W_1'}=M|W_1'|$, it suffices to show that \textsf{whp}, $\mathfrak K\ge \theta_0d_{W_1'}/12$. Indeed, splitting the set of \textsf{blue} half-edges of $W_1'$ into two parts of equal size $A$ and $B$ (independent of their heights). We perform the cut-off line algorithm \ref{def:cut-off line} to match the half-edges of $A$ first. Since at least $d_{W_1'}/4 = \theta_0 d_{G'_n}/4$ highest half-edges in $G'_n$ have been matched during this step, the cut-off line is below $1 - \frac{\theta_0}{5}$ \textsf{whp} (otherwise, by Chernoff inequality, the number of half-edges above the cut-off line is at most $\theta_0 d_{G'_n}/4$). By Chernoff inequality again, the number of half-edges of $B$ that lie above $1 - \theta_0/5$ is at least $\theta_0d_{W_1'}/12$ \textsf{whp}. Since all edges between $A$ and $B$ are inside $W_1'$, $\mathfrak K\ge \theta_0d_{W_1'}/12$ \textsf{whp}.
\end{proof}

Lemma \ref{lm:dominate:bin} allows to find a high-degree core of $W_1'$.
\begin{lemma}\label{lm:bin:core}
	Let $M$ and $\theta$ be as in Lemmas \ref{lm:out:halfedges} and \ref{lm:dominate:bin}. Assume that $\theta M\ge 100$. Then \textsf{whp}, $W_1'$ contains a subgraph $W_0'$ with the following properties.
	\begin{itemize}
		\item The number of vertices in $W_0'$ is at least $|W_1'|/2$.
		\item Each vertex in $W_0'$ has degree at least $\frac{\theta M}{20}$ inside $W_0'$; in other words, $W_0'$ is an $\frac{\theta M}{20}$-core.
		\item Each vertex in $W_0'$ has degree at most $(2j)^{R}$.
	\end{itemize}
\end{lemma}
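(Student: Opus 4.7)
The plan is to combine a concentration estimate on the degrees $\{B_v\}$ with a standard greedy peeling argument. The upper degree bound of Item~3 is immediate from the construction: every vertex of $V'$ corresponds to a ball $N(v, r_v)$ of radius $r_v \leq R$ in $\bar G_n$, and since $\bar G_n$ has maximum degree $2j$, the number of half-edges incident to such a ball is $O((2j)^R)$; vertices of $V''$ trivially have degree at most $2j$. So the focus is on Items 1 and 2.

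First, I will show concentration of the $B_v$. Let $\tilde W := \{v \in W_1' : B_v \geq \theta M / 2\}$ and fix a small constant $\eta$ (say $\eta = 1/100$). Under the unconditioned product law $\bigotimes_v \Bin(M,\theta)$, a standard Chernoff estimate gives $\P(B_v < \theta M/2) \leq e^{-\theta M/8} \leq e^{-12.5}$ since $\theta M \geq 100$. A further Chernoff bound on the i.i.d.\ indicators $\mathbf{1}[B_v < \theta M/2]$ then yields
\[
\P\bigl(|\tilde W| < (1-\eta)|W_1'|\bigr) \leq \exp(-\Omega(|W_1'|)).
\]
Passing from this product law to the conditional law on $\{\sum B_v = 2\mathfrak K\}$ costs at most a $\mathrm{poly}(|W_1'|)$ factor, because $2\mathfrak K$ is exactly the mean of the unconditioned sum and so the conditioning event has probability of order $1/\sqrt{|W_1'|}$ by a local CLT. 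Hence $|\tilde W| \geq (1-\eta)|W_1'|$ \textsf{whp}.

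Second, I will perform the peeling. Set $\Delta := \theta M / 20$ and construct $W_0'$ from $W_1'$ by iteratively deleting any vertex whose current in-subgraph blue-degree is strictly less than $\Delta$; by construction every surviving vertex has in-$W_0'$ blue-degree at least $\Delta$, so it suffices to show $|W_0'| \geq |W_1'|/2$. Let $U := W_1' \setminus W_0'$, and order its vertices by their deletion time. At the moment a vertex $u \in U$ is removed it has fewer than $\Delta$ blue-edges to the then-remaining set, so charging each blue-edge with an endpoint in $U$ to its earlier-deleted endpoint gives
\[
\#\{\text{blue-edges with an endpoint in }U\} \leq |U|\Delta.
\]
Since $\sum_{u \in U} B_u = 2\cdot\#\{\text{edges inside }U\} + \#\{\text{edges between }U\text{ and }W_0'\} \leq 2\cdot\#\{\text{blue-edges with an endpoint in }U\}$, this gives $\sum_{u \in U} B_u \leq 2|U|\Delta = |U|\theta M/10$. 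On the other hand, using step one,
\[
\sum_{u \in U} B_u \geq |U \cap \tilde W|\cdot \theta M/2 \geq \bigl(|U|-\eta|W_1'|\bigr)\cdot \theta M/2.
\]
Combining the two bounds yields $|U| \leq \tfrac{5\eta}{2}|W_1'|$, and since $\eta=1/100 \leq 1/5$ this gives $|U| \leq |W_1'|/2$.

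The main technical wrinkle is the conditional concentration in step one, but because the Chernoff estimate is exponentially small while the local-CLT correction is only polynomial, this transfer is routine; the peeling is then the usual core-finding argument and poses no additional difficulty.
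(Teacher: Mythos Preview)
Your proof is correct, but it takes a different route from the paper's. The paper finds the $\tfrac{\theta M}{20}$-core via the \emph{cut-off line algorithm} (Definition~\ref{def:cut-off line}): it assigns i.i.d.\ uniform heights to the blue half-edges, iteratively removes vertices with fewer than $s=\tfrac{\theta M}{20}$ half-edges below the current cut-off line, and shows that the line stays above height $2/3$ \textsf{whp}. The size bound then follows by counting vertices with fewer than $s$ half-edges below $2/3$, which is controlled by a single Chernoff bound on $\Bin(M,2\theta/3)$.

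Your approach is more elementary: you first establish that all but an $\eta$-fraction of vertices have $B_v\ge\theta M/2$, then run the standard greedy peeling and bound the deleted set $U$ by the charging inequality $\sum_{u\in U}B_u\le 2|U|\Delta$. This avoids the cut-off line machinery entirely and reduces the core-existence step to a purely deterministic counting argument once concentration is in hand. The trade-off is that the paper's method is tailored to the uniform-matching structure and reuses a tool already introduced for Claim~\ref{claim:edge:j}, whereas your argument is the textbook $k$-core argument and would transfer unchanged to any graph where most vertices have blue-degree $\ge\theta M/2$. Both handle the conditioning on $\sum_v B_v=2\mathfrak K$ the same way (exponential tail beats polynomial correction), and the maximum-degree bound is identical in both.
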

\begin{proof}
	Let $s=\frac{\theta M}{20}$. The last property  follows from the fact that the maximum degree of vertices in $V'$ is $(2j)^{R}$. For the rest of this proof, we only look at the \textsf{blue} half-edges in $W_1'$ that are matched to another \textsf{blue} half-edge in $W_1'$.
	To find $W_0'$, we use the cut-off line algorithm \ref{def:cut-off line} to find a uniform perfect matching of these half-edges of $W_1'$ as follows. Each of these half-edges is re-assigned a height uniformly chosen in $[0, 1]$.
	If there is a vertex in $W_1'$ with less than $s$ unmatched half-edges (equivalently, less than $s$ half-edges below the cut-off line), match its half-edges  to the highest unmatched half-edges and move the cut-off line accordingly. Remove this vertex. Repeat this step until there are no such vertices left.

	Let $W_0'$ be the set of remaining vertices. It remains to show that $|W_0'|\ge \frac{|W_1'|}{2}$ \textsf{whp}. Note that since $\mathfrak K = \Theta(n)$, the probability that $\sum_{v\in W_1'} B_v = 2\mathfrak K$ in \eqref{eq:condition:k} happens with probability $\Omega(n^{-C})$. In the rest of this proof, the tail probabilities are exponentially small in $n$ without conditioning on the event $\sum_{v\in W_1'} B_v = 2\mathfrak K$. And so, we can forget about the conditioning and assume that the number of internal half-edges of each vertex $v\in W_1'$ has degree distribution $\Bin(M, \theta)$.

	We show that after the removal, the cut-off line is above $2/3$ \textsf{whp}. Assuming this, we have
	$$|W_0'|\ge |W_1'|- N'$$
	where $N'$ is the number of vertices in $W_1'$ having less than $s$ half-edges below the line $2/3$. By Lemma \ref{lm:dominate:bin}, the number of internal half-edges of each vertex $v\in W_1'$ has degree distribution $\Bin(M, \theta)$. Thus, the distribution of the number of its half-edges that lie below the line $2/3$ is $\Bin\left (M, \frac{2\theta}{3}\right )$. So, the probability that $v$ has less than $s$ half-edges below the line $2/3$ is at most
	$$\P\left (\Bin\left (M,\frac{2\theta}{3}\right )\le \frac{\theta M}{20}\right )\le \exp\left (-\frac{\theta M}{12}\right )\le \frac{1}{200}.$$
	where we used the Chernoff inequality and the assumption $\theta M \ge 100$.
	By Chernoff inequality, we have \textsf{whp}, $N'\le \frac{|W_1'|}{100}$. And so, $|W_0'|\ge \frac{|W_1'|}{2}$ as desired.
	
	Now, we prove that after the removal process above, the cut-off line is above $2/3$ \textsf{whp}. Let $a$ be the number of removed vertices ($0\le a\le |W_1'|$). The total number of matched half-edges is at most $2as$ because each time we remove a vertex, at most $2s$ half-edges are matched. Thus, the total number of half-edges above the cut-off line is at most $2 as \le 2s|W_1'|$.
	
	On the other hand, given a vertex $v$ with degree distribution $\Bin(M, \theta)$, the distribution of the number of its half-edges that lie above the line $2/3$ is $\Bin\left (M, \frac{\theta}{3}\right )$. By Chernoff inequality, the number of half-edges of $W_1'$ above the line $2/3$ is at least $M\theta |W_1'|/4 = 5s|W_1'|>2s|W_1'|$.
	Thus, \textsf{whp}, the cut-off line is above $2/3$, completing the proof of Lemma \ref{lm:bin:core}.
\end{proof}

Next, we show that $W_0'$ is an embedded expander.
\begin{lemma}\label{lm:w0:expander}
	Let $M$ and $\theta$ be as in Lemmas \ref{lm:out:halfedges} and \ref{lm:dominate:bin}. Assume that $\theta M\ge 100$.	There exists a constant $\alpha>0$ such that \textsf{whp}, $W_0'$ is an $(\alpha, 1)$-embedded expander.
\end{lemma}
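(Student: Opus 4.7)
The plan is to prove this by a first moment argument over potentially bad subsets of $W_0'$, exploiting the uniform matching structure produced by the cut-off line algorithm of Lemma \ref{lm:bin:core}. Write $m := |W_0'|$, $s := \theta M/20$, and $\Delta := (2j)^R$. By Lemma \ref{lm:bin:core} every vertex of $W_0'$ has $W_0'$-internal degree in $[s, \Delta]$; the standing assumption $\theta M \geq 100$ gives $s \geq 5 > 4$, which will turn out to be exactly what is needed. The crucial structural input is that, conditionally on $W_0'$ and the multiset of internal degrees $(d_v)_{v \in W_0'}$, the matching of the half-edges remaining inside $W_0'$ is a uniform perfect matching, since revealing the matches of removed vertices during the cut-off line algorithm does not bias the remaining randomness.

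If $A \subset W_0'$ with $|A| = a$ satisfies $|N_{G_n'}(A) \cap W_0'| < 2a$, then one can find $B \subset W_0' \setminus A$ with $|B| < a$ such that every internal half-edge of $A$ is paired to a half-edge of $A \cup B$. Let $d_A$, $d_{A \cup B}$, and $N_0$ denote the total internal degrees of $A$, $A \cup B$, and $W_0'$, so that $sa \leq d_A \leq \Delta a$, $d_{A \cup B} \leq 2\Delta a$, and $N_0 \geq sm$. Sequentially pairing the half-edges of $A$ in the uniform matching, and using that each pair consumes at most two half-edges from $A \cup B$, yields
\begin{equation*}
\P\bigl(\text{every half-edge of $A$ pairs with $A \cup B$}\bigr) \;\leq\; \prod_{i=0}^{d_A/2 - 1}\frac{d_{A\cup B} - 2i}{N_0 - 2i - 1} \;\leq\; \left(\frac{4\Delta a}{sm}\right)^{sa/2},
\end{equation*}
provided $\alpha \leq s/(4\Delta)$, so that $4\Delta a/(sm) \leq 1$ and $\Delta a \leq sm/2$ throughout.

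A union bound over at most $\binom{m}{a}\binom{m}{<a} \leq a (em/a)^{2a}$ choices of $(A,B)$ then bounds the expected number of bad configurations of size $a$ by
\begin{equation*}
a \exp\!\left( a \left[\, 2 + \tfrac{s}{2}\log\tfrac{4\Delta}{s} - \tfrac{s-4}{2}\log\tfrac{m}{a}\, \right] \right).
\end{equation*}
Since the coefficient $-(s-4)/2$ of $\log(m/a)$ is strictly negative, taking $\alpha$ to be a sufficiently small positive constant depending on $s$ and $\Delta$ (hence only on $\mu$) makes the bracket at most $-1$ for every $1 \leq a \leq \alpha m$, giving a per-term bound of $e^{-a}$. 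Splitting the sum at $a = m^{1/2}$, the range $a \leq m^{1/2}$ enjoys the sharper bound $\log(m/a) \geq \tfrac{1}{2}\log m$, contributing $O(m^{-\Omega(1)})$, and the range $a > m^{1/2}$ contributes $O(me^{-\sqrt m})$. The expected number of bad subsets is therefore $o(1)$, and Markov's inequality concludes.

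The main obstacle is to carefully justify the uniformity claim on the internal matching of $W_0'$ after the cut-off line procedure; once this is established, the first-moment computation is routine and the inequality $s > 4$ (i.e.\ $\theta M \geq 100$) is precisely what forces the bracket above to be negative, while the small constant $\alpha$ exists to absorb the combinatorial entropy $\binom{m}{a}^2$.
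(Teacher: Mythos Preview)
Your proof is correct and follows essentially the same approach as the paper: a first-moment/union-bound argument over potential non-expanding subsets, using that the unrevealed internal half-edges of $W_0'$ after the cut-off line procedure form a uniform perfect matching. You are in fact a bit more careful than the paper in two places—explicitly justifying the uniformity of the residual matching, and taking $|B|<a$ (rather than $|B|=2a$) together with the safe exponent $sa/2$, which makes the requirement $s>4$ transparent—while the paper uses $|B|=2m$ and a product with $ms$ factors, but the underlying computation is the same.
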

\begin{proof}
	Let $N = |W_0'|$, $s =\frac{\theta M}{20}$. Note that $N=\Theta(n)$ and $s\ge 5$ is a constant. It suffices to show that for any subset $A$ of the vertex set of $W_0'$ of size $\alpha N$, the size of $N(A, 1)$ is at least twice that of $A$. In other words, \textsf{whp}, for every $m\le \alpha N$ and subsets of vertices $A, B$ with $|A| = m, |B| = 2m$, the neighbors of $A$ are not contained fully in $B$. Fix two sets $A$ and $B$ with $|A| = m, |B| = 2m$. By Lemma \ref{lm:bin:core}, the number of half-edges in $A$ is at least $m s$, the number of half-edges in $B$ is $b\le (2j)^{R}2m$ and the total number of half-edges in $W_0'$ is $c\ge Ns$. The probability that all the neighbors of $A$ belong to $B$ is at most
	\begin{equation*}
	\frac{b}{c-1}\frac{b-1}{c-3}\dots \frac{b-ms+1}{c - 2ms+1}\le \left (\frac{b}{Ns- 2ms+1}\right )^{ms} \le \left (\frac{4(2j)^{R}m}{Ns}\right )^{ms}.\nonumber
	\end{equation*}
	Taking the union bound over $m$ and choices of $A, B$, we get that the probability that $W_0'$ is not an $(\alpha, 1)$-embedded expander is at most
	\begin{equation}\label{eq:union:expander}
	\sum_{m=1}^{\alpha N} {N\choose 2m}^{2}  \left (\frac{4(2j)^{R}m}{Ns}\right )^{ms} \le  \sum_{m=1}^{\log n}   \frac{C\log ^{s-4}n}{N^{s-4}} + \sum_{m=\log n}^{\alpha N}  \left (C\alpha^{s-4}\right )^{m}
	\end{equation}
	for some constant $C$ depending only on $j, R$ and $s$. Choosing $\alpha\le \frac{1}{2C}$ makes the r.h.s. of \eqref{eq:union:expander} of order $o(1)$. This completes the proof of Lemma \ref{lm:w0:expander}.
\end{proof}

\begin{proof}[Proof of Lemma \ref{lm:structure:subex}] 
	Since $W_0'$ is a subset of $V'$, each of its vertices is a ball $N(v, r_v)$ in $\bar G_n$, a subgraph of $G_n$. Let $W_0$ be the collection of all such centers $v$. Clearly, $|W_0| = |W_0'| = \Theta(n)$ and $W_0$ is an $(\alpha, 2R+1)$-embedded expander of $G_n$.
	To finish the proof of Lemma \ref{lm:structure:subex} for subexponential degree distributions, it remains to show that there exists a choice of $j, R, \mathfrak r$ satisfying the assumptions of the previous Lemmas (in particular, Lemmas \ref{lm:rv:R}, \ref{lm:out:halfedges}, \ref{lm:bin:core} and \ref{lm:w0:expander}). In other words, for any given positive constants $R_0, \delta$ and $\ep'$ ($\delta$ and $\ep'$ can be arbitrarily small and $R_0$ can be arbitrarily large), we show that there exist a constant $j$ and random variables $R, R_1, \mathfrak r$ such that the following conditions holds:
	\begin{equation}\label{eq:upperboundR}
	R\le \delta j,
	\end{equation}
	\begin{equation}\label{eq:R:large}
	R_0\le \min\{R_1, R-R_1\},
	\end{equation}
	\begin{equation}\label{eq:R:mathfrak r}
	\frac{ \bar b^{2R_1-1} j^{2}\mathfrak u_j}{d} \le \frac{1}{10^{4}}, \quad 800 \mathfrak r\le \ep'^{2}(\bar b (1-\ep''))^{R_1-1}j, \quad \frac{ \bar b^{2R-1} j^{2}\mathfrak u_j}{d} \le \frac{\mathfrak r}{10},
	\end{equation}
	\begin{equation}\label{eq:lm:bin:core}
	\frac{\ep'^{7} (\bar b (1-\ep''))^{2R-2} j^{2}\mathfrak u_j}{64\cdot 60 d} \ge 100.
	\end{equation}
	Note that \eqref{eq:upperboundR} comes merely from the statement of Lemma \ref{lm:structure}.
	For given $j$ and $\bar b$, we define $\mathfrak r$, $R$ and $R_1$ by the following equations so that \eqref{eq:R:mathfrak r} holds automatically
	\begin{equation}\label{eq:choose:R:mathfrak r}
	\frac{ \bar b^{2R_1-1} j^{2}\mathfrak u_j}{d} =\frac{1}{10^{4}}, \quad 800 \mathfrak r= \ep'^{2} (\bar b (1-\ep''))^{R_1-1}j, \quad \frac{ \bar b^{2R-1} j^{2}\mathfrak u_j}{d} =\frac{\mathfrak r}{10}.
	\end{equation}
	Since $\mu$ has finite second moment, $\lim _{j\to \infty} j^{2}\mathfrak u_j = 0$ and so \eqref{eq:R:large} holds when $j$ is sufficiently large.
	Since $\mu$ is subexponential, $\mathfrak u_j\ge e^{-\ep_0 j}$ for any constant $\ep_0>0$ and for large $j$. By choosing $\ep_0$ to be small compared to $\delta$, \eqref{eq:upperboundR} holds. The inequality \eqref{eq:lm:bin:core} holds automatically. This completes the proof of Lemma \ref{lm:structure:subex} for subexponential distributions $\mu$.
\end{proof}
\begin{remark}\label{rmk:W0:preprocess}
	Observe that the above subgraph $W_0$ is also an $(\alpha, 2R+1)$-embedded expander of the graph $\bar G_n$ with $\deg_{\bar G_n} w\ge j/2$ for all $w\in W_0$.
\end{remark}

\subsection{Proof of Lemma \ref{lm:structure}} \label{sec:structural:ex}
In Section \ref{sec:structural:subex}, we used the assumption that $\mu$ is subexponential only in the last step of choosing the parameters $R, R_1$ and $\mathfrak r$. In particular, we used this assumption to obtain that $j$ can be an arbitrarily large constant while $\mathfrak u_j:=\mu[j, 2j]$ remains positive and $\mathfrak u_j \ge e^{-\ep_0 j}$. The inequality $\mathfrak u_j\ge e^{-\ep_0 j}$ is only used to show that with the choice of parameters $R, R_1, \mathfrak r$ as in \eqref{eq:choose:R:mathfrak r}, \eqref{eq:upperboundR} holds as stated in Lemma \ref{lm:structure:subex}. Here, when $\mu$ has an exponential tail, Lemma \ref{lm:structure} does not assert that $R\le \delta j$ and so there is no need for $\mathfrak u_j\ge e^{-\ep_0 j}$.

Thus, if $\mu$ has an infinite support, one can still find an arbitrarily large constant $j$ for which $\mathfrak u_j>0$. This is therefore enough for the rest of the proof of Section \ref{sec:structural:subex} to follow, proving Lemma \ref{lm:structure} for such $\mu$.

If the support of $\mu$ is finite, let $j$ be the largest integer in the support of $\mu$. Let $\mathfrak u_j$ be an arbitrarily small constant with $\mathfrak u_j\le \mu[j, 2j] = \mu(j)$ ($\mathfrak u_j$ could be much smaller than $\mu(j)$). Since the degrees in $G_n$ are already bounded, there is no need to run the Step 1 of preprocessing the graph as in Section \ref{sec:structural:subex}. So, for this case, $\bar G_n = G_n, \bar n = n, \bar b = b$ and so on. For the exploration, Step 2, we choose $W$ by assigning each vertex in $G_n$ of degree $j$ to $W$ independently with probability $\frac{\mathfrak u_j}{\mu(j)}$. The rest of the proof follows without any changes. In \eqref{eq:choose:R:mathfrak r}, we choose $\mathfrak u_j$ to be sufficiently small so that \eqref{eq:R:large} holds and so does \eqref{eq:lm:bin:core}. \qed

\section*{Acknowledgment}
We thank the anonymous referee for fruitful comments on improving the expositions of the manuscript.

\bibliographystyle{plain}
\bibliography{aapreference1}

\newpage

\section{Appendix}

\subsection{Proof of Lemma \ref{lem:aug}}\label{app:aug}

Here we prove Lemma \ref{lem:aug}, which is based on an elementary analysis of large deviation events.

\begin{proof}[Proof of Lemma \ref{lem:aug}]  The first statement follows directly from the Cauchy-Schwarz inequality: If we have $\E_{D\sim \mu} \exp(3\ep D) <\infty$ for some $\ep>0$, then
	\begin{equation*}
	\left( \sum_k e^{\ep k} \sqrt{p_k} \right)^2 \leq \left(\sum_k e^{3\ep k} p_k \right) \left(\sum_k e^{-\ep k} \right) <\infty.
	\end{equation*}
	
	For the second statement, let $n$ be a given large enough integer, and define
	$$k_n = \min \left\{k: \sum_{j\geq k} p_k \leq \frac{1}{n\log\log n} \right\} .$$
	
	Let $D_i$ for $i=1,\ldots n$ be i.i.d samples from $\mu$. We start by studying the empirical distribution of the $D_i$. First, by a simple union bound, the definition of $k_n$ implies that
	\begin{equation*}
	\P \left(\exists i\in [n] : \, D_i \geq k_n \right) \leq \frac{1}{\log\log n} =o(1).
	\end{equation*}
	Moreover, since $\mu$ has an exponential tail, there exists a constant $C>0$ depending on $\mu$ such that $k_n \leq C\log n$. Recall the definition of $k_0 = \max\{k: \sum_{j\geq k} \sqrt{p_k} \geq 1/2 \} $. Our next goal is to show that with high probability, the number of $i$ such that $D_i = k$ is at most $\frac{1}{2} n \sqrt{p_k}$   for all $k_0\leq k\leq k_n$. We consider two possible cases of $k$ as follows:
	\begin{enumerate}
		\item [1.] For $k$ such that $p_k \leq (n \log^2 n)^{-1}$, Markov's inequality implies that
		\begin{equation}\label{eq:augbd1}
		\P (|\{i: D_i=k \}|\geq 1 )\leq (\log^2 n)^{-1}.
		\end{equation}
		
		\item [2.] For $k$ such that $p_k \geq (n\log^2 n)^{-1}$, we use the following large deviation estimate for binomials (Corollary 22.9 of \cite{Frieze15}): for $c>1,$
		\begin{equation*}
		\P (\Bin(n,p)\geq cnp ) \leq \exp \{-np(c\log c + 1-c  )\}.
		\end{equation*}
		This gives that
		\begin{equation}\label{eq:augbd2}
		\begin{split}
		&\P\left(|\{i:D_i=k \} |  \geq \frac{1}{2} n\sqrt{p_k}  \right) \leq
		\exp \left(\frac{1}{2} n\sqrt{p_k} (1-2\sqrt{p_k}+\log(2\sqrt{p_k})) \right) \leq \exp \left(-n^{1/3}\right).
		\end{split}
		\end{equation}
	\end{enumerate}
	Since $k_n \leq C\log n$, applying a union bound on (\ref{eq:augbd1},  \ref{eq:augbd2}) tells us that
	\begin{equation}\label{eq:augineq}
	\P \left(\exists\; k_0\leq k \leq k_n: |\{i: D_i =k \}| \geq \frac{1}{2}n\sqrt{p_k}  \right) = o(1).
	\end{equation}
	When $\mu$ satisfies $k_0<k_{\max}$, (\ref{eq:augineq})  implies that the empirical distribution of $\{D_i : i\in [n]\}$ is stochastically dominated by $\mu^\sharp$, since $\mu^\sharp (k) \geq \sqrt{p_k}$ by the definition of $k_0$. On the other hand, if $k_0=k_{\max}$, the stochastic domination becomes trivial because we only augment the weight of $k_{\max}$ in $\mu$.
	Since taking out any $n/3$ entries from $[n]$ can only increase each probability mass of the empirical distribution of $\{D_i : i\in[n] \}$ by a factor of $3/2$, with high probability we have for each $k_0\leq k $,
	$$ |\{i: D_i = k \}| \leq \frac{3}{4}n\sqrt{p_k},$$
	and hence we conclude the second statement of Lemma \ref{lem:aug}.
\end{proof}

\subsection{Proof of Lemma \ref{lem:1cyc}}\label{app:treeexcess}

In this section, we prove Lemma \ref{lem:1cyc}. We use Lemma \ref{lem:aug} to bound the probability of $N(v,c\log n)$ having at least two cycles.

\begin{proof}[Proof of Lemma \ref{lem:1cyc}] Let $v$ be an arbitrary vertex in $G\sim \mathcal{G}(n,\mu)$, fixed before we explore the matchings of half-edges. We again study the local neighborhood $N(v,L)$ by exploration process, particularly in terms of the breadth-first search perspective. We start exploring from the single vertex $v$, and at time $s$ we explore all the vertices of distance $s$ from $v$, based on what we explored until time $s-1$. Let $V_s$ be the collection of vertices explored at time $s$, and set $X_s = |V_s|$.
	
	We will bound the probability of discovering at least two cycles during the exploration process until depth $L=c \log n$ ($c$ will be determined later). Let $\mu^\sharp$ be the augmented distribution (Definition \ref{def:aug}) and let $\widetilde{\mu}^\sharp:= (\mu^\sharp)'_{[1,\infty )}$ denote its size-biased distribution conditioned on being inside the interval $[1,\infty)$. Also, let $\mathcal{T}\sim \gw(\mu^\sharp, \widetilde{\mu}^\sharp)_L $ and let $Y_s$ be the number of vertices in $\mathcal{T}$ at depth $s$. Then, Lemma \ref{lem:aug} implies that there exists a coupling between $(X_s)_{s\leq L}$ and $(Y_s)_{s\leq L}$ in such a way that $X_s \leq Y_s$ for all $s\leq L$, as long as $\sum_{s=0}^L X_s \leq n/3$. Define $B$ to be the event that $\sum_{s=0}^L X_s \leq n/3$. On $B$, we clearly have
	$$\sum_{v\notin \cup_{s\leq L} V_s} \deg(v) \geq \frac{2n}{3}.$$
	
	Assume that when moving from $V_s$ to $V_{s+1}$, we pair the half-edges adjacent to $V_s$ one by one. Let $H_s$ be the number of unpaired half-edges adjacent to the vertices in $V_s$. For $s\leq L$ and $1\leq i \leq H_s$, let $\mathcal{F}_{s,i}$ be the $\sigma$-algebra generated by the exploration process until pairing the $(i-1)$-th half-edge. Set $W_{s,i}$ to be the collection of unpaired half-edges at that moment.      Further, let $A_{s,i}$ be the event that the $i$-th half-edge adjacent to $V_s$ is paired with a half-edge in $W_{s,i}$. Then clearly,
	\begin{equation*}
	\P (A_{s,i} | \mathcal{F}_{s,i},B) \leq \frac{3|W_{s,i}|}{2n}.
	\end{equation*}
	
	\noindent We bound the size of $W_{s,i}$ based on the following observations:
	\begin{enumerate}
		\item [1.] Since the exploration of half-edges in $H_{s-1}$ for $s\geq 1$ is done independently step by step, we can stochastically dominate $|H_s|$ by i.i.d $\zeta_j \sim \widetilde{\mu}^\sharp$ as
		\begin{equation*}
		|H_s| \;\leq_{st}\; \sum_{j=1}^{Y_s} \zeta_j\; \overset{d}{=}\; Y_{s+1}.
		\end{equation*}
		
		\item [2.]  Since $\zeta \sim \widetilde{\mu}^\sharp$ satisfies $\zeta\geq 1$, we can bound $|W_{s,i}|$ similarly by
		\begin{equation*}
		|W_{s,i}| \;\leq_{st}\; \sum_{j=1}^{H_s} \zeta_j\; \leq_{st}\; \sum_{j=1}^{Y_{s+1}} \zeta_j\; \overset{d}{=} \;Y_{s+2}.
		\end{equation*}
	\end{enumerate}
	Therefore, combining above argument gives that
	\begin{equation*}
	\sum_{j=1}^{H_s} \one_{A_{s,j}} \; \leq_{st}\; \Bin\left({Y_{s+1}},{\frac{3Y_{s+2}}{2n}}\right),
	\end{equation*}
	where the l.h.s is conditioned on the event $B$. Notice that the l.h.s of the above inequality stochastically dominates the number of cycles formulated during the exploration of depth from $s$ to $s+1$. Hence the number of cycles in $N(v,L)$ conditioned on $B$ is stochastically dominated by
	\begin{equation}\label{eq:bd of cyc}
	\Bin \left(\sum_{s=1}^{L+1} Y_s , \frac{3 Y_{L+2}}{2n}  \right).
	\end{equation}

	Now we bound the size of $Y_s$ to conclude our argument. Let $\zeta \sim \mu^\sharp$ and $\zeta' \sim \widetilde{\mu}^\sharp$, and let $\ep, M$ be the constants that satisfy  $\max\{ \E e^{\ep \zeta}, \E e^{\ep \zeta'} \} \leq M $. Set $K$ to be a large constant such that $M^{1/K} =e^{\ep}$. Then we observe that $Y_s / K^s$ has an exponential tail for all $s$, since
	
	\begin{equation}\label{eq:exp tail for aug-gw}
	\begin{split}
	\E \left[\exp \left(\ep \frac{Y_s}{K^s} \right) \right]
	&=
	\E \left[\E_{\zeta' } \left[ \exp \left(\ep \frac{\zeta'}{K^s} \right)  \right]^{Y_{s-1}} \right]\leq
	\E \left[M^{Y_{s-1}/K^s} \right]= \E \left[\exp \left(\ep \frac{Y_{s-1}}{K^{s-1}} \right) \right],
	\end{split}
	\end{equation}
	where the inequality is due to Jensen's inequality. Iterating this $(s-1)$-times gives that the l.h.s is bounded by $e^\ep$. Set $c>0$ to be the constant satisfying $K^{c \log n + 3} = n^{1/6}.$
	Based on the above observation, we bound the quantity (\ref{eq:bd of cyc}) as follows.
	
	\begin{equation}\label{eq:bd the num of cyc}
	\begin{split}
	\P \left(\Bin \left(\sum_{s=1}^{L+1} Y_s , \frac{3 Y_{L+2}}{2n}  \right) \geq 2 \right)\; &\leq\;
	\P\left( \Bin \left( n^{1/5}, \frac{3}{2}n^{-4/5} \right) \geq 2 \right)+
	\P \left(\sum_{s=1}^{L+2} Y_s \geq n^{1/5} \right).
	\end{split}
	\end{equation}
	It is easy to see that the first term in the r.h.s is bounded by $o(n^{-1})$. The second term can be bounded using (\ref{eq:exp tail for aug-gw}). Namely,
	
	\begin{equation*}
	\begin{split}
	&\P \left(\sum_{s=1}^{L+2} Y_s \geq n^{1/5} \right)
	\leq
	e^{-\ep n^{1/30}} \E \left[\exp\left(\ep K^{-(L+3)} \sum_{s=1}^{L+2} Y_s\right) \right]\\
	&\qquad\leq
	e^{-\ep n^{1/30}} \E \left[\exp\left(\ep K^{-(L+3)} \sum_{s=1}^{L+1} Y_s\right)  \cdot \exp \left(\ep K^{-(L+2)} Y_{L+1}  \right) \right].
	\end{split}
	\end{equation*}
	Iterating this $(L+1)$ more times, we obtain
	\begin{equation*}
		\P \left(\sum_{s=1}^{L+2} Y_s \geq n^{1/5} \right)
		\leq
		e^{-\ep n^{1/30}} \E \left[\exp\left( \ep \left(\sum_{s=1}^{L+2} K^{-s}\right) \right) \right]
		 \leq \exp {(\ep (1- n^{1/30}))} = o(n^{-1}),
	\end{equation*}
	as long as $K\geq 2$. Applying our estimates to (\ref{eq:bd the num of cyc}), we conclude the desired result.
\end{proof}

\subsection{Proof of Proposition \ref{prop:cp on egw}}\label{app:cp on egw}

The proof follows the same technique as Lemma \ref{lem:cp on gwc}.

\begin{proof}[Proof of Proposition \ref{prop:cp on egw}] Let $s\geq 2$ and $L$ be any integers, and we build up an inductive argument starting from $l=0$.
	
	Let $\mathcal{S}_0 \sim \egw (\mu^\sharp, \widetilde{\mu}^\sharp;0,s)_L$, and $\rho^+ \in \mathcal{S}_0^+$ be the parent of $\rho$ as before. Define $S_{0,s,L}$ to be the first time when $(X_t)\sim\cp^\lambda_{\rho^+} (\mathcal{S}_0^+;\one_\rho)$ reaches  state $\zero$. Similarly as in Lemmas \ref{lem:cp on gw} and \ref{lem:cp on gwc}, we consider $(\widetilde{X}_t) \sim \tcp^\lambda_{\rho^+;\rho}(\mathcal{S}_0^+;\one_\rho)$, which is coupled with $(X_t)$ in such a way that they share the same infection and recovery clocks, except that in $(\widetilde{X}_t)$, the recovery at $\rho$ is ignored if at that time there exists an infected vertex other than $\rho$ and $\rho^+$. Letting $D\sim \mu^\sharp$ be $D+3=\deg(\rho;\mathcal{S}_0^+)$, $\mathcal{T}_{u_1},\ldots,\mathcal{T}_{u_D}$ be the i.i.d $\gw(\widetilde{\mu}^\sharp)_{L-1}$ subtrees from the children of $\rho$ and $\mathcal{S}'$ be the $\gwc(\widetilde{\mu}^\sharp;s)_L$ process that also hangs at $\rho$, we obtain the following by repeating the same argument in Lemma \ref{lem:cp on gwc}.
	\begin{equation}\label{eq:recursion for egw0}
	\E[S_{0,s,L}|D] \leq (1+\lambda \E[S_{L-1}])^D (1+2\lambda \E[S_{s,L}]),
	\end{equation}
	where $S_{L-1}$, $S_{s,L}$ are as in the statements of Lemmas \ref{lem:cp on gw} and \ref{lem:cp on gwc}, respectively.
	
	For general $l\neq 0$, we first develop the same argument in terms of $\mathcal{S}_l' \sim \egw(\widetilde{\mu}^\sharp;l,s)_L$. Let $S_{l,s,L}'$ be the first time when $\cp^\lambda_{\rho^+}((\mathcal{S}_l')^+;\one_\rho)$ reaches at $\zero$. For $D'\sim \widetilde{\mu}^\sharp$ denoting $D'+1 = \deg(\rho; (\mathcal{S}_{l}')^+)$, the subgraphs of descendents from the children of $\rho$ consist of $D'$ i.i.d  $\egw(\widetilde{\mu}^\sharp;l-1,s)_L$ processes. Therefore, repeating the previous reasoning gives that
	\begin{equation}\label{eq:recursion augegw}
	\E[S_{l,s,L}'|D'] \leq  (1+\lambda \E[S_{l-1,s,L}'])^{D'}.
	\end{equation}
	
	Finally, the subgraphs of descendents (from children of $\rho$) of $\mathcal{S}_l \sim \egw(\mu^\sharp,\widetilde{\mu}^\sharp;l,s)_L$  for $l\geq 1$ with $\deg(\rho)=D$ consist of $D$ i.i.d  $\egw (\widetilde{\mu}^\sharp; l-1,s)_L$ processes. Therefore, we deduce that $S_{l,s,L}$, the first time when $\cp^\lambda_{\rho^+}(\mathcal{S}_l;\one_\rho)$ reaches $\zero$, satisfies
	\begin{equation}\label{eq:recursion egw1}
	\E [S_{l,s,L}|D] \leq (1+\lambda \E[S_{l-1,s,L}'])^D.
	\end{equation}
	Here, the law of $D$ follows the conditional distribution of $\mu^\sharp$ being inside the interval $[1,\infty)$.
	
	Combining the three equations (\ref{eq:recursion for egw0}), (\ref{eq:recursion augegw}) and (\ref{eq:recursion egw1}), we obtain that there exists a constant $\lambda_0$ such that for all $\lambda\leq \lambda_0$, $l,s$ and $L$,
	\begin{equation*}
	\E[S_{l,s,L}]\leq 2e,
	\end{equation*}
	by manipulating the constants in the same way as Lemma \ref{lem:cp on gwc}. Then, the standard coupling between contact processes tells us that $R_{l,s,L} \leq_{st} S_{l,s,L}$, which concludes the proof. 
\end{proof}

\subsection{Proof of Lemma \ref{lem:dcp on egw}}\label{app:dcp on egw}

To establish Lemma \ref{lem:dcp on egw}, we first prove the result for GWC-processes. Let $\widetilde{\mu}^\sharp$ be as in Section \ref{sec:sub rg}. Namely, $\widetilde{\mu}^\sharp$ is the augmented distribution of $\mu'_{[1,\infty)}$, where $ \mu'_{[1,\infty)}$ is the size-biased distribution of $\mu$ conditioned on being in $[1,\infty)$.

\begin{lemma}\label{lem:dcp on gwc}
	Let $\mathcal{S} \sim \gwc( \widetilde{\mu}^\sharp;s)_L$, and $\nu^\theta_{\mathcal{S}}$ be the stationary distribution of $\ddp^{\lambda,\theta}_{\rho} (\mathcal {S})$ on the space $\{0,1\}^{\mathcal{S}\setminus\{\rho\} }$, which is the delayed contact process on $\mathcal{S}$ with $\rho$ set to be infected permanently. Then there exist constants $C, \lambda_0>0$ depending only on $\mu$ such that for all $\lambda\leq \lambda_0$ and $s, L$ with $s\geq 2$, we have $\E [\nu^\theta_{\mathcal{S}}(\zero)^{-1}] \leq 2$ for $\theta = C\lambda$.
\end{lemma}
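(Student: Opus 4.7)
The plan is to combine the techniques of Proposition \ref{prop:dcp on gw} (which inducts on tree depth $L$ for the delayed contact process on Galton-Watson trees) and Lemma \ref{lem:cp on gwc} (which inducts on cycle length $s$ for the regular contact process on GWC-processes). Concretely, letting $S^\theta_{s,L}$ denote the first time $\ddp^{\lambda,\theta}_{\rho}(\mathcal{S}; \one_v)$ reaches $\zero$ for some neighbor $v$ of $\rho$ on the cycle, we induct on $s \geq 2$ to prove $\E[S^\theta_{s,L}] \leq 2/\theta$ and then read off the conclusion from the identity $\nu^\theta_{\mathcal{S}}(\zero) = 1/(1 + \lambda\,\E[S^\theta_{s,L}\mid \mathcal{S}])$.

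For the base case $s=2$, the process $\gwc(\widetilde{\mu}^\sharp;2)_L$ is essentially a Galton-Watson tree $\mathcal{T}_L \sim \gw(\widetilde{\mu}^\sharp)_L$ with a permanently infected parent $\rho$, except that $\rho$ is connected to its unique child $v$ by two parallel edges, so the infection rate from $\rho$ to $v$ is $2\lambda$ instead of $\lambda$. The argument of Proposition \ref{prop:dcp on gw} then applies verbatim after this cosmetic change of constants.

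For the inductive step $s \geq 3$, introduce the modified delayed process $(\widetilde{X}_t) \sim \widetilde{\ddp}^{\lambda,\theta}_{\rho;v}(\mathcal{S}; \one_v)$ coupled with $(X_t) \sim \ddp^{\lambda,\theta}_\rho(\mathcal{S}; \one_v)$ so that the recovery at $v$ is ignored whenever some vertex outside $\{\rho, v\}$ is infected. With $D \sim \widetilde{\mu}^\sharp$ denoting the number of children of $v$ lying outside the cycle and $\mathcal{T}_{u_1},\dots,\mathcal{T}_{u_D}$ the corresponding $\gw(\widetilde{\mu}^\sharp)_{L-1}$ subtrees, the residual graph $\mathcal{S}' := \mathcal{S}\setminus \cup_i \mathcal{T}_{u_i}$ can be identified with a $\gwc(\widetilde{\mu}^\sharp;s-1)_L$-like process (with both $\rho$ and $v$ permanently infected on a shortened cycle). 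Once $(\widetilde{X}_t)$ leaves $\one_v$ it evolves as a product of $\{\ddp^{\lambda,\theta}_v(\mathcal{T}_{u_i}^+)\}_{i=1}^D$ and $\ddp^{\lambda,\theta}_{\rho,v}(\mathcal{S}')$ until it returns to $\one_v$. The same geometric series bookkeeping used in equations \eqref{eq:geometric trial dcp} and \eqref{eq:gw delayed recursion}, combined with the relations between the running times $S^\otimes$ and the stationary distributions of the component processes analogous to \eqref{eq:stationary and gwc}, yields the recursion
\begin{equation*}
\E[S^\theta_{s,L} \mid D] \leq \frac{1}{\theta}\bigl(1 + \lambda\,\E[S^\theta_{L-1}]\bigr)^{D}\bigl(1 + 2\lambda\,\E[S^\theta_{s-1,L}]\bigr),
\end{equation*}
where $S^\theta_{L-1}$ is the quantity already controlled by Proposition \ref{prop:dcp on gw}. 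Since $D$ has an exponential tail by Lemma \ref{lem:aug}, Jensen's inequality applied to the factor $(1 + \lambda\,\E[S^\theta_{L-1}])^D$ (exactly as in the last step of Proposition \ref{prop:dcp on gw}) lets us choose $C$ large and $\lambda_0$ small so that the inductive hypothesis $\E[S^\theta_{s-1,L}] \leq 2/\theta$ propagates to $\E[S^\theta_{s,L}] \leq 2/\theta$, closing the induction in $s$.

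The main obstacle is the depth comparison underlying the step $\nu^\otimes(\zero) \leq \widetilde{\nu}^\theta(\zero)$, which in the Galton-Watson setting reduced to $r(x;\mathcal{T}_L) = \max_i r(x_i;\mathcal{T}_{u_i}) \leq \sum_i r(x_i)$. In the GWC geometry the depth $r(x;\mathcal{S})$ (from the permanently infected $\rho$) can be realized either through a subtree hanging off $v$ or through the cycle itself, so one must verify an inequality of the form $r(x;\mathcal{S}) \leq \sum_i r(x_i;\mathcal{T}_{u_i}^+) + r(x';\mathcal{S}')$ and check that it is applied in the correct direction to control $\widetilde{\nu}^\theta_\mathcal{S}(\zero)$ from below by $\nu^\otimes_\mathcal{S}(\zero)$. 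Once this depth bookkeeping is set up carefully, the rest of the argument is a routine merging of the two earlier proofs.
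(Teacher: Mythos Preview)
Your proposal is correct and follows essentially the same approach as the paper: induction on $s$ with the base case $s=2$ reducing to Proposition~\ref{prop:dcp on gw}, and the inductive step deriving the recursion $\E[S^\theta_{s,L}\mid D]\le \theta^{-1}(1+\lambda\E[S^\theta_{L-1}])^D(1+2\lambda\E[S^\theta_{s-1,L}])$ via the modified process and stationary-measure comparison. The paper resolves the depth bookkeeping you flag in two steps---first $\nu'_{\mathcal S}(\zero)\ge\bigl(\prod_i\nu_{\mathcal T_{u_i}}(\zero)\bigr)\nu_{\widetilde{\mathcal S}}(\zero)$ by the usual $\max\le\sum$ inequality, then $\nu_{\widetilde{\mathcal S}}(\zero)\ge\nu_{\widetilde{\mathcal S}'}(\zero)$ by merging $\rho$ and $v$ and using $r(x;\widetilde{\mathcal S})-1\le r(x;\widetilde{\mathcal S}')$, keeping track of the shift $r(\zero;\widetilde{\mathcal S})=1$ which produces an extra $\theta$ in the formula for $\nu'_{\mathcal S}(\zero)$; one small slip in your sketch is that $\rho$ has two cycle neighbors, so $\nu^\theta_{\mathcal S}(\zero)^{-1}=1+2\lambda\,\E[S^\theta_{s,L}\mid\mathcal S]$ rather than $1+\lambda(\cdot)$.
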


\begin{proof}
	Let $v,v'$ be the two neighbors of $\rho$ in $\mathcal{S}$. Let $S_{v}^\theta$ denote the first time when $\ddp^{\lambda,\theta}_\rho (\mathcal{S};\one_v)$ reaches $\zero$, and define $S_{v'}^\theta$ analogously. Then, we set $S_L^\theta = \frac{1}{2} ( S_{v}^\theta  + S_{v'}^\theta)$.
	
	As before, we build up an inductive argument on $s$. The case $s=2$ is essentially the same as Proposition \ref{prop:dcp on gw}, since $\mathcal{S}\sim \gwc(\widetilde{\mu}^\sharp;2)_L$ can be thought of as $\mathcal{T}_L^+$ with $\mathcal{T}_L\sim \gw(\widetilde{\mu}^\sharp)_L$, where $\rho^+$ in $\mathcal{T}_L^+$ is connected with $\rho $ by a double-edge. Thus, the same proof  of Proposition \ref{prop:dcp on gw} can be applied, and we leave the details to the reader.
	
	The general case $s\geq 3$ is also similar to the previous arguments of Propositions \ref{prop:cp on egw} and \ref{prop:dcp on gw}, but there is a subtle difference in comparing the stationary distributions, which makes the current case more technical. As before, we start with introducing a modified process as follows.
	
	Let $v$ be a neighbor of $\rho$ in $\mathcal{S}$, and $(X_t) \sim \ddp^{\lambda,\theta}_\rho (\mathcal{S};\one_v)$. Define $(\widetilde{X}_t) \sim \widetilde{\ddp}^{\lambda,\theta}_{\rho;v}(\mathcal{S};\one_v)$ as
	\begin{enumerate}
		\item [1.] $(\widetilde{X}_t)$ has the same infection and recovery clocks at $(X_t)$.
		
		\item[2.]  In $(\widetilde{X}_t)$, any recovery attempt at $v$ is ignored if there exists an infected vertex other than $\rho$ and $v$ at that moment.
	\end{enumerate}
	
	If $\{\rho, v\}$ infects a (random) neighbor $U$ before $v$ is healed, then $(\widetilde{X}_t)$ behaves as $\ddp^{\lambda,\theta}_{\rho,v}(\mathcal{S};\one_U)$ (meaning that we fix both $\rho,v$ to be infected forever), until $\widetilde{X}_t$ comes back to $\one_v$. Let $\widetilde{S}_{s,L}^\theta$ denote the first time when $\widetilde{\ddp}^{\lambda,\theta}_{\rho;v}(\mathcal{S};\one_v)$ becomes $\zero$, and $\widetilde{S}^\theta$ be the first time it takes for $\ddp^{\lambda,\theta}_{\rho,v}(\mathcal{S};\zero)$ to return to $\zero$ after infecting a (random) vertex $U$ other than $\rho$ and $v$. Setting $D\sim \widetilde{\mu}^\sharp$ to be $\deg(v)=D+2$,  the same reasoning as (\ref{eq:geometric trial cp}, \ref{eq:geometric trial dcp}) implies that
	\begin{equation}\label{eq:recursion dcp on gwc}
	\begin{split}
	\E \left[\left.\widetilde{S}_{s,L}^\theta\,\right|\,\mathcal{S} \right]
	&=
	\sum_{k=0}^\infty \left(\frac{(D+2)\lambda}{1+(D+2)\lambda} \right)^k \frac{1}{1+(D+2)\lambda}\times \left[\frac{k+1}{\theta(1+(D+2)\lambda)}
	+ k\,\E\left[\left. \widetilde{S}^\theta \,\right|\,\mathcal{S} \right] \right]\\
	&=
	\frac{1}{\theta} \left(1+(D+2)\lambda \theta \E\left[\left. \widetilde{S}^\theta \,\right|\,\mathcal{S} \right] \right).
	\end{split}
	\end{equation}
	
	Now we take account of the stationary measures to compare the running times. We first set up some notations as follows.
	\begin{itemize}
		\item $\nu_{\mathcal{S}}'$ and $\pi_{\mathcal{S}}'$ are the stationary distribution of $\ddp^{\lambda,\theta}_{\rho, v} (\mathcal{S})$ and $\cp^\lambda_{\rho,v}(\mathcal{S}) $, respectively.
		
		\item $\mathcal{T}_{u_1} , \ldots , \mathcal{T}_{u_D}$ denote the subtrees from the children $u_1,\ldots, u_D$ of $v$ outside the cycle. Note that these subtrees are i.i.d $\gw(\widetilde{\mu}^\sharp)_{L-1}$.
		
		\item Set $\widetilde{\mathcal{S}} = \mathcal{S}\setminus \cup_{i=1}^D \mathcal{T}_{u_i} $. $\ddp^{\lambda,\theta}_{\rho,v}(\widetilde{\mathcal{S}}) $ denotes the delayed contact process on $\widetilde{\mathcal{S}}$ that fixes both $\rho, v$ to be infected permanently, which has the depth $r(x;\widetilde{\mathcal{S}})$  computed with respect to $\rho$. In particular, all possible states of $\ddp^{\lambda,\theta}_{\rho,v} (\widetilde{\mathcal{S}})$ have depth $r(x)$ at least one, since $v$ is always infected.
		
		\item $\nu_{\mathcal{T}_{u_i}}$ and $\nu_{\widetilde{\mathcal{S}}}$ are the stationary distributions of $\ddp^{\lambda,\theta}_v (\mathcal{T}_{u_i}^+)$ and $\ddp^{\lambda,\theta}_{\rho,v}(\widetilde{\mathcal{S}})$, respectively. Moreover, $\pi_{\mathcal{T}_{u_i}}$ and $\pi_{\widetilde{\mathcal{S}}}$ denote the stationary distributions of $\cp^{\lambda}_v(\mathcal{T}_{u_i}^+)$ and $\cp^\lambda_{\rho,v}(\widetilde{\mathcal{S}})$, respectively. Also, set $$\nu_{\mathcal{S}}^\otimes =\left(\otimes_{i=1}^D \nu_{\mathcal{T}_{u_i}} \right) \otimes \nu_{\widetilde{\mathcal{S}}}.$$
	\end{itemize}
	Note that $\pi_{\mathcal{S}}' = \left(\otimes_{i=1}^D \pi_{\mathcal{T}_{u_i}} \right) \otimes \pi_{\widetilde{\mathcal{S}}}$. Keeping in mind that $r(\zero; \mathcal{S})=1$ in $\ddp^{\lambda,\theta}_{\rho,v}(\mathcal{S})$, we obtain by using (\ref{eq:stationary of delayed gw}) that
	\begin{equation}\label{eq:prod dcp vs total dcp}
	\nu_{\mathcal{S}}' (\zero) \geq \nu_{\mathcal{S}}^\otimes (\zero).
	\end{equation}
	Moreover, observe that if we merge $\rho$ and $v$ in $\widetilde{\mathcal{S}}$ into a single vertex $\rho'$, then the resulting graph $\widetilde{\mathcal{S}}'$ satisfies $\widetilde{\mathcal{S}}' \sim \gwc(\widetilde{\mu}^\sharp;s-1)_L$, and we can consider the natural one-to-one correspondence between the two state spaces $\{0,1\}^{\widetilde{\mathcal{S}}\setminus\{\rho,v \}}$ and $\{0,1\}^{\widetilde{\mathcal{S}}'\setminus\{\rho'\}}$. Thus, we can regard them as 
	$$\Omega =\{0,1\}^{\widetilde{\mathcal{S}}\setminus\{\rho,v \}}= \{0,1\}^{\widetilde{\mathcal{S}}'\setminus\{\rho'\}}.$$ 
	For any $x\in \Omega \setminus \{\zero \}$, note that
	\begin{equation*}
	r(x;\widetilde{\mathcal{S}}) \in \left\{r(x;\widetilde{\mathcal{S}}'),\; r(x;\widetilde{\mathcal{S}}')+1 \right\}.
	\end{equation*}
	In particular, $r(x;\widetilde{\mathcal{S}}) -1\leq r(x;\widetilde{\mathcal{S}}')$. Further, we have $r(\zero;\widetilde{\mathcal{S}})=1$ and $r(\zero;\widetilde{\mathcal{S}}')=0$. This implies that if $\nu_{\widetilde{\mathcal{S}}'}$ denotes the stationary distribution of $\ddp^{\lambda,\theta}_{\rho'}(\widetilde{\mathcal{S}}')$, then
	$$\nu_{\widetilde{\mathcal{S}}'}(\zero) \leq \nu_{\widetilde{\mathcal{S}}}(\zero) .$$
	
	Therefore, combining with (\ref{eq:prod dcp vs total dcp}), we have
	\begin{equation}\label{eq:prod dcp on gwc vs total dcp}
	\nu_{{\mathcal{S}}}'(\zero) \geq \left(\prod_{i=1}^D \nu_{{\mathcal{T}_{u_i}}}(\zero) \right) \cdot \nu_{\widetilde{\mathcal{S}}'} (\zero).
	\end{equation}
	We can relate the quantities in (\ref{eq:prod dcp on gwc vs total dcp}) with the running times of the delayed processes. Let $S_{L-1}^\theta$  be the first time when $\ddp^{\lambda,\theta}_{\rho}(\mathcal{T}_{u_i}; \one_{u_i}) $ returns to $\zero$. Similarly, let $v_1, v_2\notin \{\rho,v \}$ be the two neighbors of $\{\rho,v\}$ in $\mathcal{S}'$, let $S_{v_i}^\theta$ be the first time when $\ddp^{\lambda,\theta}_{\rho,v}(\mathcal{S}';\one_{v_i})$ reaches $\zero$, and observe that $S_{s-1,L}^\theta \overset{d}{=} \frac{1}{2}(S_{v_1}^\theta + S_{v_2}^\theta)$, where  the definition of $S_{s-1,L}^\theta$ is given in the beginning of the proof.  Continuing similarly as (\ref{eq:stationary and gwc}, \ref{eq:stationary and delayed running time}), we get that
	\begin{equation}\label{eq:stationary dcp on gwc and running time}
	\begin{split}
	\nu_{{\mathcal{S}}}'(\zero)
	&=
	\frac{1}{1+(D+2)\lambda \theta  \E [\widetilde{S}^\theta|\mathcal{S}]};\\
	\nu_{{\mathcal{T}_{u_i}}}(\zero)
	&=
	\frac{1}{1+\lambda \E [S_{L-1}^\theta|\mathcal{T}_{u_i} ]};\\
	\nu_{\widetilde{\mathcal{S}}'}(\zero)
	&=
	\frac{1}{1+2\lambda \E[S_{s-1,L}^\theta|\widetilde{\mathcal{S}}] },
	\end{split}
	\end{equation}
	where the additional factor of $\theta$ in the first identity comes from the fact that $r(\zero;\mathcal{S})=1$ in $\ddp^{\lambda,\theta}_{\rho,v}(\mathcal{S})$. Plugging  these into (\ref{eq:prod dcp on gwc vs total dcp}) and using (\ref{eq:recursion dcp on gwc}), we obtain that
	\begin{equation*}
	\E [S_{s,L}^\theta|D] \leq \E\left[\left.\widetilde{S}_{s,L}^\theta\right|D\right]
	\leq
	\frac{1}{\theta} (1+\lambda \E[S_{L-1}^\theta])^D (1+2\lambda \E[S_{s-1,L}^\theta]).
	\end{equation*}
	Arguing similarly as Lemma \ref{lem:cp on gwc} and Proposition \ref{prop:dcp on gw}, we deduce that there exist constants $C,\lambda_0>0$ depending on $\mu$ such that for all $\lambda\leq \lambda_0$ and $s,L$ with $s\geq2$, $\E[S_{s,L}^\theta]\leq 3/\theta$ for $\theta = C\lambda$. Setting $C$ to satisfy $C\geq 3$, and applying this to the right-hand side of the above equation (which is written in terms of $(s-1,L)$) gives the desired conclusion.
\end{proof}

\begin{proof}[Proof of Lemma \ref{lem:dcp on egw}] To finish the proof of Lemma \ref{lem:dcp on egw}, we argue similarly as Proposition \ref{prop:cp on egw}. Namely, we establish the result for $\egw(\widetilde{\mu}^\sharp;l,s)_L$ and then extend it to the general case $\egw(\mu^\sharp, \widetilde{\mu}^\sharp;l,s)_L$. In both steps, we appeal to the same technique as Proposition \ref{prop:dcp on gw}, which is simpler than what is done here for the GWC-processes. We omit the details due to similarity.
\end{proof}

\subsection{Proof of Lemma \ref{lm:degree:reduction}, Items \ref{item:config}-\ref{item:branchingrate}}\label{app:preprocessing}

Item \ref{item:config} follows from the definition of $G_n$ that its edges are obtained from a uniformly chosen perfect matching of the half-edges.

For Item \ref{item:sumdi}, choose $j$ large enough such that 
$$\delta:= \E_{D\sim \mu} D\one_{D\ge 2j+1}\le \ep/4.$$
For each vertex $v\in G_n$, consider the random variable 
$$X_v := \deg_{G_n}(v)\one_{\deg_{G_n}(v)\ge 2j+1}.$$ 
These random variables are independent with mean $\delta$ and variance bounded by the second moment of $\mu$. By Chebyshev's inequality, \textsf{whp}
$$\sum_{v\in G_n} X_v\le \ep n/2.$$
Thus, \textsf{whp}, the total number of removed half-edges is at most $2\sum_{v\in G_n} X_v\le \ep n$. So is the number of removed vertices. Thus, $\bar n\ge (1-\ep )n$. Applying Chernoff inequality to the random variables $\bar X_v := \deg_{G_n}(v)\one_{\deg_{G_n}(v)\le 2j}$ we obtain that \textsf{whp},
$$\sum _{v\in G_n} \bar X_{v} \in (1-\ep, 1+\ep) nd.$$
Combining this with the fact that the total number of deleted half-edges is at most $\ep nd$ \textsf{whp}, we get
$$(1-2\ep) nd\le \sum _{v\in G_n} \bar X_{v} -  \ep n\le d_1+\dots +d_{\bar n}\le \sum _{v\in G_n} \bar X_{v} \le (1+\ep) nd$$
completing the proof of Item \ref{item:sumdi}.

To prove Item \ref{item:branchingrate}, let $\ep' = \min\{\ep, \ep(b-1)\}$. Let $k_0$ be a large constant such that for all $h\ge k_0$, the branching rate of $\mu_{[0, h]}\in (1-\ep', 1+\ep')b$, namely
\begin{equation}\label{eq:b:ep}
\frac{\E_{D\sim \mu} D(D-1)\one_{0\le D\le h}}{\E_{D\sim \mu} D \one_{0\le D\le h}}\in  (1-\ep', 1+\ep')b,
\end{equation}
\begin{equation}\label{eq:b:ep:2}
\begin{split}
&\E_{D\sim \mu} D\one_{0\le D\le k_0}\in (1-\ep', 1+\ep') d,\\
\text{and }\quad &\E_{D\sim \mu} D^{2}\one_{0\le D\le k_0}\ge \frac{2}{\ep'} \E_{D\sim \mu} D^{2}\one_{k_0<D}.\end{split}	\end{equation}

Let $k\ge k_0$ be such that
\begin{equation}\label{eq:2moment}
\E _{D\sim \mu}  D^{2}\one_{k_0<D\le k} \ge \frac{2}{\ep'}\E _{D\sim \mu}  D^{2}\one_{k<D}.
\end{equation}
which $k$ exists because of the boundedness of $\E_{D\sim \mu} D^{2}$.  Note that \eqref{eq:2moment} implies that
\begin{equation}\label{eq:2moment:2}
\E _{D\sim \mu} \left ( D\one_{k_0<D\le k}\right )\ge \frac{2}{\ep'}\E _{D\sim \mu} \left ( D \one_{k<D}\right ).
\end{equation}

We now show that for all constant $j\ge k$, $\bar b\in (1-\ep', 1+\ep') b$ \textsf{whp}. Let $E_l$ and $\bar E_l$ be the number of half-edges attached to vertices of degree $l$ in $G_n$ and $\bar G_n$ respectively. We need to show that \textsf{whp},
\begin{equation}\label{eq:bar b:b}
\bar b = \frac{\sum_{l=0}^{2j} (l-1)\bar E_l}{\sum_{l=0}^{2j} \bar E_l} \in (1-\ep, 1+\ep) b.
\end{equation}

Since $j$ is a constant and $E_l = l \sum_{v\in G_n}\one_{\deg_{G_n} (v)=l}$, by Chernoff inequality, \textsf{whp} we have,
\begin{equation}\label{eq:e':e}
\begin{split}
&\sum_{l=0}^{2j} E_l  \in  (1-\ep', 1+\ep') \sum_{l=0}^{2j} l \mu(l) n \quad \text{ and } \\
&\sum_{l=0}^{2j} (l-1)E_l  \in  (1-\ep', 1+\ep')\sum_{l=0}^{2j} l(l-1)\mu(l) n.
\end{split}
\end{equation}
This together with \eqref{eq:b:ep} and \eqref{eq:b:ep:2} give
$$\frac{\sum_{l=0}^{2j} (l-1)E_l}{\sum_{l=0}^{2j} E_l} \in (1-3\ep', 1+3\ep') b \quad \text{and}\quad \sum_{l=0}^{2j} l \mu(l) \in (1-\ep', 1+\ep')d.$$
Since the total number of removed half-edges is at most $\ep' n d$ \textsf{whp},
$$ \sum_{l=0}^{2j} \bar E_l  \in  \sum_{l=0}^{2j} E_l + (-\ep', 0) nd\subset (1-2\ep', 1+2\ep') \sum_{l=0}^{2j} E_l .$$
From this and \eqref{eq:e':e}, \eqref{eq:bar b:b} reduces to proving that
$$\sum_{l=0}^{2j} l \bar E_l\in (1-\ep', 1+\ep')\sum_{l=0}^{2j} l E_l.$$
The upper bound is straightforward. To prove the lower bound, let  $N_l$ be the number of vertices of degree $l$ in $G_n$. Since the number of deleted half-edges is at most $\sum_{l=2j+1}^{\infty} l N_l $, we have by Markov's inequality, \eqref{eq:2moment:2}, and Chernoff inequality, \textsf{whp}
\begin{equation}\label{eq:e:e':2:0}
\begin{split}
\sum_{l=0}^{2j} (E_l-\bar E_l)&\le  \sum_{l=2j+1}^{\infty} l N_l \le \frac{1}{\ep'} \E \sum_{l=2j+1}^{\infty} l N_l\le \frac{1}{2} \E \sum_{l=k_0+1}^{2j} l N_l \le \sum_{l=k_0+1}^{2j} l N_l.
\end{split}
\end{equation}
Thus, we have
\begin{equation}\label{eq:e:e':2}
\sum_{l=0}^{2j} l (E_l-\bar E_l) \le \sum_{l=k_0+1}^{2j} l^{2} N_l\le \ep' \sum_{l=0}^{2j} l^{2} N_l = \ep' \sum_{l=0}^{2j} l E_l
\end{equation}
where the first inequality follows from \eqref{eq:e:e':2:0} and the fact that the left-hand side of \eqref{eq:e:e':2} is largest when the deleted half-edges counted in $\sum_{l=0}^{2j} (E_l-\bar E_l)$ are drawn from vertices of highest degrees possible and the second inequality follows from the Chernoff inequality and \eqref{eq:b:ep:2}. That completes the proof of \eqref{eq:bar b:b} and hence Item \ref{item:branchingrate}. \qed

\end{document}